\def\timenow{\@tempcnta\time
  \@tempcntb\@tempcnta
  \divide\@tempcntb60
  \ifnum10>\@tempcntb0\fi\number\@tempcntb
  \multiply\@tempcntb60
  \advance\@tempcnta-\@tempcntb
  :\ifnum10>\@tempcnta0\fi\number\@tempcnta}
\newtheorem{theo}{Theorem}[section]
\newtheorem{prop}[theo]{Proposition}
\newtheorem{lemme}[theo]{Lemma}
\newtheorem{cor}[theo]{Corollary}
\newtheorem{remarque}[theo]{Remark}
\def\tA2{\tilde{A}({\tilde L_2})}
\def \tts2{\tilde \tau^+_2(  h_t/2 ))}
\title{Exponential functionals of spectrally one-sided L\'evy processes conditioned to stay positive}
\author{Gr\'egoire V\'echambre}
\address{NYU Shanghai, Office 1133, 1555 Century avenue, Pudong New Area, Shanghai 200122, China}
\email{gregoire.vechambre@ens-rennes.fr}
\subjclass[2010]{60G51.}
\keywords{L\'evy processes conditioned to stay positive, exponential functionals, self-decomposable distributions.}
\date{\today\ \`a \currenttime}
\begin{document} 

\maketitle

\begin{abstract}
We study the properties of the exponential functional $\int_0^{+ \infty} e^{- X^{\uparrow} (t)}dt$ where $X^{\uparrow}$ is a spectrally one-sided L\'evy process conditioned to stay positive. In particular, we study finiteness, self-decomposability, existence of finite exponential moments, asymptotic tail at $0$ and smoothness of the density. 
\end{abstract}


\pagestyle{myheadings}
\markboth{Right}{Exponential functionals of spectrally one-sided L\'evy processes conditioned to stay positive}

\section{Introduction}

We consider a spectrally negative L\'evy process $V$ that is not the opposite of a subordinator. We denote its Laplace exponent by $\Psi_V$: 
\[ \forall t, \lambda \geq 0, \ \mathbb{E} \left [ e^{\lambda V(t)} \right ] = e^{t \Psi_V(\lambda)}. \]
In the case where $V$ drifts to $-\infty$, it is well known that its Laplace exponent admits a non trivial zero that we denote here by $\kappa$, $\kappa := \inf \{ \lambda > 0, \ \Psi_V(\lambda) = 0 \}$. If $V$ does not drift to $-\infty$, then $0$ is the only zero of $\Psi_V$ on $[0, +\infty[$ so we put $\kappa := 0$ in this case. We denote by $(Q, \gamma, \nu)$ the generating triplet of $V$. According to the L\'evy-Kintchine formula, $\Psi_V$ can be expressed as
\begin{eqnarray}
\Psi_V(\lambda) = \frac{Q}{2} \lambda^2 - \gamma \lambda + \int_{-\infty}^0 (e^{\lambda x} - 1 - \lambda x \mathds{1}_{|x| < 1}) \nu(dx). \label{levkhin}
\end{eqnarray}
We also consider $Z$, a spectrally positive L\'evy process drifting to $+\infty$ with unbounded variation. 

We are interested in the exponential functionals of $V$ and $Z$ conditioned to stay positive, 
\[ I(V^{\uparrow}) := \int_0^{+ \infty} e^{- V^{\uparrow} (t)}dt \ \ \ \text{and} \ \ \ I(Z^{\uparrow}) := \int_0^{+ \infty} e^{- Z^{\uparrow} (t)}dt. \]
For both we study finiteness, exponential moments, and the asymptotic tail at $0$. For $I(V^{\uparrow})$, we use its property of self-decomposability to get precise estimates on the asymptotic tail at $0$ and properties of the density. The exponential functional $I(V^{\uparrow})$ has already appeared in the study of self-similar Markov processes with no positive jumps by Bertoin, Caballero \cite{bertoincaballero} and Pardo \cite{Pardo2009}. In both those articles is proved the self-decomposability of the functional $I(V^{\uparrow})$. Moreover, in \cite{bertoincaballero}, the authors also compute the moments of small positive order of this functional. In \cite{Pardo2009}, the asymptotic almost sure behavior at $0$ and $+\infty$ of a positive self-similar Markov process is linked to the left tail of $I(V^{\uparrow})$. As a consequence, it seems that the study of the left tail of $I(V^{\uparrow})$ in the present paper can be put in relation with the results of \cite{Pardo2009} to understand the almost sure behavior of positive self-similar Markov processes with no positive jumps. 

Our first motivation here is to extend to spectrally one-sided L\'evy processes conditioned to stay positive the general study of exponential functionals of L\'evy processes. Those functionals have been widely studied because of their importance in probability theory. For example they are fundamental in the study of diffusions in random environments and appear in many applications such as the study of self-similar Markov processes and mathematical finance, see Bertoin, Yor \cite{Bertoinyor} and Pardo, Rivero \cite{pardosurvey} for surveys on those functionals and their applications. For a general L\'evy process, equivalent conditions for the finiteness of the exponential functional are given in \cite{Bertoinyor}, the asymptotic tail at $+ \infty$ of the functional is studied in \cite{rivero2005}, \cite{Maulik2006156}, \cite{10.230725464921}, \cite{rivero2012} (see also \cite{Bertoinyor}, \cite{pardosurvey}), the absolute continuity is proved in \cite{blr}, \cite{pardo2013}, and properties of the density (such as regularity) are studied in \cite{MR1648657}, \cite{pardo2013}, \cite{Patie2013393}. Recently, factorization identities for exponential functionals of L\'evy processes have been proved in \cite{ref6patie}, \cite{patieref8}, \cite{Patie2013393} (see also \cite{pardosurvey}). They are helpful for the study of some properties of exponential functionals such as unimodality, smoothness of the density, complete monotonicity of the density, and they allow to derive {explicit} representations of the density in some special cases. Moreover, these factorization identities give a central place to the study of exponential functionals of subordinators and of spectrally negative L\'evy processes, since the latter two are the two typical factors in which a general exponential functional is decomposed. A fruitful approach in the study of exponential functionals of L\'evy processes is to consider their Mellin transform that satisfies a functional equation, see \cite{MR1648657}, \cite{Maulik2006156}, \cite{Kuznetsov2013}. The solution of this equation in term of a generalized Weierstrass product is given in \cite{Patie2013393}. The equation on the Mellin transform allows to compute the entire moments of exponential functionals in some cases, see \cite{Bertoinyor}. The relation between self-similar Markov processes and exponential functionals of L\'evy processes can also be used to study the latter, see \cite{pierre2009}, \cite{patie2012}. {Let us also mention that recently, advancements have been made by Arista, Rivero \cite{aristarivero} in the study of exponential functionals: they establish a functional relation satisfied by the density of exponential functionals which allows to retrieve many properties of the functionals (such as the above mentioned factorization identities and the behavior of the asymptotic tails) and to establish new ones.} In this paper we highlight and exploit some links between exponential functionals of L\'evy processes and exponential functionals of L\'evy processes conditioned to stay positive. 
Also, as a by-product of our approach, we retrieve some known results about the exponential functionals of spectrally negative L\'evy processes. 

Our second motivation is the possibility to apply our results to the study of diffusions in spectrally negative L\'evy environments. Such processes, introduced by Brox \cite{Brox} when the environment is given by a Brownian motion, have been specifically studied for the spectrally negative L\'evy case by Singh \cite{Singh}. In \cite{advech}, Andreoletti, Devulder and the author prove that the supremum of the local time, $\mathcal{L}_X^*$, of a diffusion in a drifted Brownian environment converges in distribution. They express the limit law in term of a bidimensional stable subordinator that depends on an exponential functional of the environment conditioned to stay positive. In order to generalize their results to a diffusion in a spectrally negative L\'evy environment, knowledge on the exponential functionals involved is needed. These are precisely exponential functionals of the environment (which is spectrally negative) and its dual (which is spectrally positive) conditioned to stay positive. 

Finally, we have hints that the almost sure asymptotic behavior of $\mathcal{L}_X^*$, for a diffusion in the spectrally negative L\'evy environment $V$, is crucially linked to the right and left tails of the distribution of $I(V^{\uparrow})$. This is why we study these tails here and give for the left tail a precise asymptotic estimate when it is possible, in particular when $\Psi_V$ is regularly varying. For the right tail, we are mainly interested in the existence of some finite exponential moments. The application of the present work to diffusions in random environments is a work in preparation by the author \cite{caslevyvech}, \cite{psvech}. 


For a process $A$ and a Borel set $S$ we denote
\[ \tau(A, S) := \inf \left \{ t \geq 0, \ A(t) \in S \right \}, \ \ \ \mathcal{R}(A, S) := \sup \left \{ t \geq 0, \ A(t) \in S \right \}, \]
where by convention $\inf \emptyset = \infty$. We shall only write $\tau(A, x)$ (respectively $\mathcal{R}(A, x)$) instead of $\tau(A, \{x\})$ (respectively $\mathcal{R}(A, \{x\})$) and $\tau(A, x+)$ instead of $\tau(A, [x, +\infty [)$. For example, since $V^{\uparrow}$ has no positive jumps, we see that it reaches each positive level continuously: $\forall x > 0, \ \tau(V^{\uparrow}, x+) = \tau(V^{\uparrow}, x)$ and since moreover $V^{\uparrow}$ converges to $+\infty$ we have $\forall x > 0, \ \mathcal{R}(V^{\uparrow}, [0, x]) = \mathcal{R}(V^{\uparrow}, x)$. 

If $A$ is Markovian and $x \in \mathbb{R}$ we denote $A_x$ for the process $A$ starting from $x$. For $A_0$ we shall only write $A$. 
For any (possibly random) time $T > 0$, we write $A^T$ for the process $A$ shifted and centered at time $T$: $\forall s \geq 0, \ A^T(s) := A(T+s)-A(T)$. 

Let $W$ be the scale function of $V$, defined as in Section VII.2 of Bertoin \cite{Bertoin}. It satisfies 
\[ \forall 0 < x < y, \ \mathbb{P} \left ( \tau(V_x,y) < \tau(V_x,]-\infty, 0]) \right ) = W (x) / W (y). \]
According to Theorem VII.8 in \cite{Bertoin}, this function is continuous, increasing, and for any $\lambda > \kappa$, 
\[ \int_{0}^{+\infty} e^{-\lambda x} W(x) dx = \frac{1}{\Psi_V(\lambda)} < +\infty. \]

We now explain how the scale function $W$ allows to define $V^{\uparrow}$, that is, $V$ conditioned to stay positive (see \cite{Bertoin}, Section VII.3 for proofs and more details). The relation 
\[ \forall t \geq 0, \ x, y > 0, \ p^{\uparrow}_t(x, dy) := W(y) \mathbb{P} \left ( V_x(t) \in dy, \ \inf_{[0, t]} V_x > 0 \right ) / W(x), \]
defines a Markovian semigroup. 
For any $x > 0$, we denote by $V^{\uparrow}_x$ the Markovian process starting from $x$, taking values in $]0, +\infty[$ and whose transition semigroup is $p^{\uparrow}_t$. $V^{\uparrow}_x$ is commonly called $V$ \textit{conditioned to stay positive starting from $x$}. This is justified by the fact that for any $ y> x > 0$ the process $( V_x^{\uparrow}(t), \ 0 \leq t \leq \tau(V_x^{\uparrow}, y))$ is equal in distribution to the process $( V_x(t), \ 0 \leq t \leq \tau(V_x, y))$ conditionally on $\{ \tau(V_x, y) < \tau(V_x, ]- \infty, 0]) \}$. 
In the case where $V$ drifts to $+\infty$ this relation remains true after replacing the hitting time of $y$ by $+\infty$. In the cases where $V$ oscillates or drifts to $-\infty$, it is not possible to condition in the usual {sense} $V_x$ to remain positive for ever, because this occurs with a probability equal to $0$. However, even in these cases, $V^{\uparrow}_x$ has infinite life-time. It is moreover possible to show that in all cases $V^{\uparrow}_x$ converges almost surely to $+\infty$. 
\medbreak
It is possible to show that there exists a process $V^{\uparrow}_0$, that we denote by $V^{\uparrow}$, starting from $0$ and which is the limit in distribution of the processes $V^{\uparrow}_x$ as $x$ goes to $0$. $V^{\uparrow}$ is called $V$ \textit{conditioned to stay positive}, it is a Feller process whose restriction to $]0, +\infty[$ of the transition laws is given by $p^{\uparrow}_t$. Moreover, note that for any positive $x$, we have from the Markov property and the absence of positive jumps that the process $V^{\uparrow}$, shifted at $\tau(V^{\uparrow}, x)$, is equal in law to $V^{\uparrow}_x$.

In the case where $V$ drifts to $-\infty$, we recall the definition of $V$ conditioned to drift to $+\infty$ (see \cite{Bertoin}, Section VII.1 for proofs and more details). Note that, since $\kappa$ is a zero of $\Psi_V$, the process $e^{\kappa V(t)}$ is a martingale with expectation $1$. This allows to define the convolution semigroup of probability measures $(p^{\sharp}_t)_{t \geq 0}$ via the relation 
\[ p^{\sharp}_t (dy) := e^{\kappa y} \mathbb{P} \left ( V(t) \in dy \right ). \]
The convolution semigroup $(p^{\sharp}_t)_{t \geq 0}$ is associated to a spectrally negative L\'evy process that we denote by $V^{\sharp}$ and which is commonly called \textit{$V$ conditioned to drift to $+\infty$}. This is justified by the fact that 
\begin{itemize}
\item $V^{\sharp}$ drifts to $+\infty$, 
\item For any $y > 0$ the process $( V^{\sharp}(t), \ 0 \leq t \leq \tau(V^{\sharp}, y))$ is equal in distribution to the process $( V(t), \ 0 \leq t \leq \tau(V, y))$ conditionally on $\{ \tau(V, y) < +\infty \}$.  
\end{itemize}
It is not difficult to see that the Laplace exponent $\Psi_{V^{\sharp}}$ of $V^{\sharp}$ is obtained by translation of $\Psi_V$: $\Psi_{V^{\sharp}} = \Psi_V(\kappa + .)$. 
As a consequence $\Psi'_{V^{\sharp}}(0) > 0$ which, according to Corollary VII.2 in \cite{Bertoin}, proves that $V^{\sharp}$ drifts to $+\infty$ as stated above. It is also proven that $V^{\uparrow} = (V^{\sharp})^{\uparrow}$ and this is why, for convenience, we often work with $V^{\sharp}$ instead of $V$. 

In order to do our proofs in a systematic way, we define $V^{\sharp}$ to be \textit{$V$ conditioned to drift to $+\infty$} in the case where $V$ drifts to $-\infty$ and \textit{only $V$} in the other cases (when $V$ oscillates or drifts to $+\infty$). As a consequence, $V^{\sharp}$ always denotes a spectrally negative L\'evy process that does not drifts to $-\infty$ (it oscillates if $V$ does and it drifts to $+\infty$ if $V$ drifts to $+\infty$ or $-\infty$) and that satisfies $\Psi_{V^{\sharp}} = \Psi_V(\kappa + .)$. In any case we have that for all $0 < x < y$, $(V^{\uparrow}_x(t), \ 0 \leq t \leq \tau(V^{\uparrow}_x, y))$ is equal in law to $(V^{\sharp}_x(t), \ 0 \leq t \leq \tau(V^{\sharp}_x, y))$ conditionally on $\{ \tau(V^{\sharp}_x, y) < \tau(V^{\sharp}_x, ]-\infty, 0]) \}$. 
Note that the same identity is true with $V$ instead of $V^{\sharp}$, but the advantage of dealing with $V^{\sharp}$ is that $\tau(V^{\sharp}_x, y)$ is always finite (while $\tau(V_x, y)$ can possibly be infinite) which simplifies the argumentation. 

Let us now recall some facts about $Z^{\uparrow}$, that is, $Z$ conditioned to stay positive. By assuming unbounded variation for $Z$ we excluded the case where $Z$ is a subordinator. In this case, $Z$ would stay positive and $I(Z^{\uparrow})$ would be only $I(Z)$ which has already been widely studied. It is already known to be finite and have some finite exponential moments (see for example Theorem 2 in \cite{Bertoinyor}), so Theorem \ref{vneglapl} below is already known in the subordinator case. 
Since, in our case, $-Z$ is spectrally negative and not the opposite of a subordinator (we denote by $\kappa_Z$ the non-trivial zero of $\Psi_{-Z}$, the Laplace exponent of $-Z$), it is regular for $]0, +\infty[$ according to Theorem VII.1 in \cite{Bertoin}, so $Z$ is regular for $]-\infty, 0[$. Moreover, $Z$ drifts to $+\infty$. We can thus define the Markov family $( Z^{\uparrow}_x, \ x > 0 )$ as in Doney \cite{Doney}, Chapter 8. It can be seen from there that for any $x > 0$ the process $Z^{\uparrow}_x$ is Markovian and has infinite life-time (here is used the hypothesis that $Z$ drifts to $+\infty$). Then, note from Theorem 25 in \cite{Doney} that $Z^{\uparrow}_0$, that we denote by $Z^{\uparrow}$, is well defined if and only if $Z$ is regular for $]0, +\infty[$. According to Corollary VII.5 in \cite{Bertoin} applied to the dual of $Z$, the latter is equivalent to the fact that $Z$ has unbounded variation. This is why we have made this assumption. 

\subsection{Results}

In the special case of the exponential functional of a drifted Brownian motion conditioned to stay positive, all the properties that are studied here are already known. We discuss this case in the next subsection. 

Our first result is the finiteness of $I(V^{\uparrow})$ and the fact that it admits exponential moments. 

\begin{theo} \label{finiteandexpomoments}

The random variable $I(V^{\uparrow})$ is almost surely finite 
and 
\begin{eqnarray}
\forall \lambda < \Psi_V(\kappa + 1), \ \mathbb{E} \left [ e^{\lambda I(V^{\uparrow})} \right ] < + \infty. \label{momentsexpo}
\end{eqnarray}
Moreover, there are two positive constants $K_1 > K_2 > 0$ such that for all $x$ large enough 
\begin{eqnarray}
\exp \left ( -K_1 x \right ) \leq \mathbb{P} \left ( I(V^{\uparrow}) \geq x \right ) \leq \exp \left ( -K_2 x \right ). \label{queueexpo}
\end{eqnarray}
Note that from \eqref{momentsexpo}, $K_2$ in \eqref{queueexpo} can be chosen as close as we want to $\Psi_V(\kappa + 1)$. 
\end{theo}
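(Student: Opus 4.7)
The argument splits into three steps, with Step~3 being the main technical obstacle.

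For Step~1 (finiteness and exponential moments), the key input is the integer moment formula
\begin{equation*}
\E[I(V^{\uparrow})^n] \;=\; \frac{n!}{\prod_{k=1}^{n} \Psi_V(\kappa+k)}, \qquad n \geq 1,
\end{equation*}
which follows from the Mellin-type recursion for $s \mapsto \E[I(V^{\uparrow})^s]$ that one derives from the Markov property of $V^{\uparrow}$ together with the Lamperti correspondence developed in \cite{bertoincaballero} and \cite{Pardo2009}. The convexity of $\Psi_V$ and $\Psi_V(\kappa)=0$ together imply that $\Psi_V$ is nondecreasing on $[\kappa,+\infty)$, so each factor in the denominator is at least $\Psi_V(\kappa+1)$. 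Hence $\E[I(V^{\uparrow})^n]/n! \leq \Psi_V(\kappa+1)^{-n}$, and summing the exponential series yields $\E[e^{\lambda I(V^{\uparrow})}] \leq (1-\lambda/\Psi_V(\kappa+1))^{-1}$ for every $\lambda < \Psi_V(\kappa+1)$, proving \eqref{momentsexpo} and \emph{a fortiori} the almost sure finiteness of $I(V^{\uparrow})$.

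Step~2 (upper tail) is an immediate Chernoff/Markov bound: for any $\lambda < \Psi_V(\kappa+1)$, $\P(I(V^{\uparrow}) \geq x) \leq e^{-\lambda x}\,\E[e^{\lambda I(V^{\uparrow})}]$, so $K_2$ can be chosen arbitrarily close to $\Psi_V(\kappa+1)$.

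For Step~3 (lower tail), since $V^{\uparrow}$ has no positive jumps and $V^{\uparrow}(0)=0$, one has $V^{\uparrow}(s) \leq 1$ on $[0,\tau(V^{\uparrow},1)]$ and therefore $I(V^{\uparrow}) \geq e^{-1}\tau(V^{\uparrow},1)$; it is thus enough to show that $\tau(V^{\uparrow},1)$ has an at-most-exponentially decaying tail. I plan to compute the Laplace transform of $\tau(V^{\uparrow}_x,1)$ via the Doob $h$-transform with weight $W$ applied to Bertoin's two-sided exit formula for $V^{\sharp}$, obtaining
\begin{equation*}
\E\bigl[e^{-q\tau(V^{\uparrow}_x,1)}\bigr] \;=\; \frac{W(1)\,W^{(q)}(x)}{W(x)\,W^{(q)}(1)},
\end{equation*}
where $W^{(q)}$ denotes the $q$-scale function of $V^{\sharp}$. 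Letting $x \downarrow 0$ gives the Laplace transform of $\tau(V^{\uparrow},1)$ under $\P$, and the exponential decay rate of its tail is read off from the first pole on the negative real axis of the analytic continuation in $q$.

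The principal difficulty is Step~3: identifying the location of that first pole and justifying the analytic continuation requires a careful study of $W^{(q)}$ for small negative $q$. A possible fallback is a sub-multiplicative argument based on the Markov property, iterating $\P(\tau(V^{\uparrow},A) \geq t)$ by means of a uniform lower bound on $\P(\tau(V^{\uparrow}_y,A) \geq 1)$ for $y \in [0,A/2]$, together with a lower bound on the hold-near-zero probability; but establishing these uniform estimates introduces its own technical issues.
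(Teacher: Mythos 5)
Your Step~1 rests on the exact moment formula $\E[I(V^{\uparrow})^n]=n!/\prod_{k=1}^n\Psi_V(\kappa+k)$, but this formula is incorrect. A counterexample is the $\kappa$-drifted Brownian motion: there $\Psi_V(\kappa+s)=\tfrac{s(\kappa+s)}{2}$, so the formula would predict $\E[I(W_{\kappa}^{\uparrow})^2]=\tfrac{4}{(\kappa+1)(\kappa+2)}$, while the known Laplace transform \eqref{laplcasmb} expands as $\E[e^{-\lambda I}]=1-\tfrac{2\lambda}{\kappa+1}+\bigl(\tfrac{4}{(\kappa+1)^2}-\tfrac{2}{(\kappa+1)(\kappa+2)}\bigr)\lambda^2+\cdots$, hence $\E[I(W_{\kappa}^{\uparrow})^2]=\tfrac{4(\kappa+3)}{(\kappa+1)^2(\kappa+2)}$, which is strictly larger. (The quantity $n!/\prod_{k=1}^n\Psi_{V^{\sharp}}(k)$ is the Cauchy moment formula that would apply to $I(V^{\sharp})$ when those moments are finite, not to $I(V^{\uparrow})$; Proposition~\ref{vpospartoffonctclassic} shows $I(V^{\sharp})\overset{\mathcal{L}}{=}S_T+I(V^{\uparrow})$ is strictly larger, and in general $\E[I(V^{\sharp})^n]$ is not even finite.) Fortunately your conclusion survives because only the weaker inequality $\E[I(V^{\uparrow})^n]\le n!\,\E[I(V^{\uparrow})]^n$ is needed, and this is what the paper actually proves: it first computes $\E[I(V^{\uparrow})]=1/\Psi_V(\kappa+1)$ explicitly (via Corollaries VII.16 and VII.3 of \cite{Bertoin}), then establishes the inequality by induction using the Markov property at the intermediate times $t_k$, together with the monotonicity $h(x):=\E[I(V_x^{\uparrow})]\le\E[I(V^{\uparrow})]$. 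You should replace your exact formula with this argument. Your Step~2 is then fine.

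For Step~3 you propose to track the first negative pole of $q\mapsto 1/W^{(q)}(1)$; this is a legitimate route but considerably harder than what is needed, since \eqref{queueexpo} only asks for \emph{some} finite $K_1$, not the sharp rate. The paper instead reads off from the random power series representation \eqref{kesten2.0} that $I(V^{\uparrow})\overset{sto}{\ge}S_T$, with $S$ a subordinator and $T$ an independent exponential time; then $\P(S_T\ge x)\ge\P(S_{x/\epsilon}\ge x)\,\P(T\ge x/\epsilon)$, the first factor tends to $1$ by the law of large numbers for Lévy processes (choose $\epsilon<\E[S_1]$), and the second is exactly $e^{-px/\epsilon}$. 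This one-line stochastic lower bound is both shorter and avoids the analytic-continuation difficulties with $W^{(q)}$ that you correctly flag as the main obstacle to your approach. Your elementary observation that $I(V^{\uparrow})\ge e^{-1}\tau(V^{\uparrow},1)$ is correct and could also be made to work via your sub-multiplicative fallback, but the decomposition route is cleaner.
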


If $V$ drifts to $+\infty$, is non-arithmetic, is such that the Laplace transform of $V(1)$ extends on the negative half-line, and if there is $\alpha > 0$ such that $\Psi_V(-\alpha) = 0$ and $\Psi_V'(-\alpha) \in ]-\infty, 0[$ then, according to the results of \cite{rivero2005} and \cite{Maulik2006156} (see also \cite{Bertoinyor}, \cite{pardosurvey}, \cite{aristarivero}), we have that the right tail of $I(V)$ is of order $x^{-\alpha}$. If the L\'evy measure of $V$ is subexponential or convolution equivalent, then the right tail of $I(V)$ is given by the results of \cite{Maulik2006156} and \cite{rivero2012} (see also \cite{pardosurvey}, \cite{aristarivero}). {In any case the right tail of $I(V)$ is at least inverse polynomial, this is guaranteed, not only for $V$ spectrally negative but for a general $V$ that is not a subordinator, by Theorem 2.14(1) of \cite{bersteingamma} (under some assumptions, that theorem also provides the asymptotic of the density of the functional and of its derivatives).} Therefore the existence of finite exponential moments for $I(V^{\uparrow})$ given by Theorem \ref{finiteandexpomoments} contrasts with the right tail known for $I(V)$. This difference between the two tails comes from the fact that $V^{\uparrow}$ stays positive so the right tail of $I(V^{\uparrow})$ is determined by the right tail of the local time of $V^{\uparrow}$ at positive levels, which is exponential by the Markov property. To the contrary, the absolute minimum of $V$ is negative and has a strong influence on the right tail of $I(V)$ (see for example \cite{rivero2012} {and Theorem 4 in \cite{aristarivero}} for {comparisons} between the right tail of the exponential functional and the right tail of the absolute minimum). 

%
%
%
%

Then, a fundamental point of our study is Proposition \ref{decomposition} which says that for any positive $y$, $I(V^{\uparrow})$ satisfies the random affine equation 
\begin{eqnarray}
I(V^{\uparrow}) \overset{\mathcal{L}}{=} A^y + e^{-y} I(V^{\uparrow}), \label{rae}
\end{eqnarray}
where $A^y$ is independent of the second term and will be specified later. From \eqref{rae} we can notice that $I(V^{\uparrow})$ is a positive self-decomposable random variable and is therefore infinitely divisible, absolutely continuous and unimodal. It is well known that the exponential functional $I(V)$ of a spectrally negative L\'evy process $V$ is also self-decomposable (as long as it is finite). It can be seen by splitting the trajectory at $\tau(V, y)$, the first passage time at $y$ (see also 
Remark 1 in Rivero \cite{rivero2012} for an explicit representation of $I(V)$ as the stationary distribution of an Ornstein-Uhlenbeck type process, and Section 5 in Patie, Savov \cite{Patieref10} where $I(V)$ is characterized via its L\'evy triplet). 
As we said, the right tail of $I(V)$ (when the latter is finite) never has exponential moments while, by Theorem \ref{finiteandexpomoments}, the right tail of $I(V^{\uparrow})$ always has. It follows that the class of self-decomposable random variables that can be represented by $I(V^{\uparrow})$, for some spectrally negative L\'evy process $V$, is disjoint from the class of self-decomposable random variables that can be represented by $I(V)$, for some spectrally negative L\'evy process $V$. However, the following results (especially Proposition \ref{vposqueue0decond}) show that the asymptotic tails at $0$ of these two classes of self-decomposable random variables are comparable. 
Another consequence of \eqref{rae} is that for any positive $y$, $I(V^{\uparrow})$ can be written as the random series
\[ I(V^{\uparrow}) \overset{\mathcal{L}}{=} \sum_{k \geq 0} e^{-ky} A^y_k, \]
where the random variables $A_k^y$ are \textit{iid} and have the same law as $A^y$. The novelty of our approach is mainly to study the exponential functional $I(V^{\uparrow})$ via the random coefficient $A^y$ of which we decompose the law into two convolution factors that are accessible via their Laplace transforms. This decomposition is thus a useful tool for the study of $I(V^{\uparrow})$ and is the base of the proofs of many results we present below. 

Our next results link the asymptotic behavior of $\Psi_{V}$ with the left tail of $I(V^{\uparrow})$. 

\begin{theo} \label{queuefonctexpo}

Assume that there is $\alpha > 1$ and a positive constant $C$ such that for all $\lambda$ large enough we have $\Psi_V(\lambda) \leq C \lambda^{\alpha}$. Then for all $\delta \in ]0, 1[$ and $x$ small enough we have 
\begin{eqnarray}
\mathbb{P} \left ( I(V^{\uparrow}) \leq x \right ) \leq \exp \left ( - \delta (\alpha - 1) / (C x)^{1/(\alpha - 1)} \right ). \label{queuefonctexpo1}
\end{eqnarray}

Assume that there is $\alpha > 1$ and a positive constant $c$ such that for all $\lambda$ large enough we have $\Psi_V(\lambda) \geq c \lambda^{\alpha}$. Then for all $\delta > 1$ and $x$ small enough we have 
\begin{eqnarray}
\mathbb{P} \left ( I(V^{\uparrow}) \leq x \right ) \geq \exp \left ( - \delta \alpha^{\alpha/(\alpha-1)} / (c x)^{1/(\alpha - 1)} \right ). \label{queuefonctexpo2}
\end{eqnarray}

\end{theo}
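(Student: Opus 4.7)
The plan is to reduce both inequalities to a two-sided asymptotic estimate on the Laplace transform $\phi(\mu) := \mathbb{E}[e^{-\mu I(V^{\uparrow})}]$ as $\mu \to +\infty$. Under the hypothesis $\Psi_V(\lambda) \leq C\lambda^\alpha$ for large $\lambda$, the target intermediate bound is $-\log\phi(\mu) \geq \alpha C^{-1/\alpha}\mu^{1/\alpha}(1-o(1))$, and symmetrically under $\Psi_V(\lambda) \geq c\lambda^\alpha$ one aims at $-\log\phi(\mu) \leq \alpha c^{-1/\alpha}\mu^{1/\alpha}(1+o(1))$. The bounds \eqref{queuefonctexpo1} and \eqref{queuefonctexpo2} then follow by a Legendre-type optimization whose optimum $\mu \asymp x^{-\alpha/(\alpha-1)}$ produces exactly the exponent $-1/(\alpha-1)$ in $x$ together with the precise numerical constants $\alpha-1$ and $\alpha^{\alpha/(\alpha-1)}$.

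The central tool for the Laplace-transform asymptotic is the random affine equation \eqref{rae}. Taking Laplace transforms and using the stated independence gives, for every $y>0$,
$$\phi(\mu) \;=\; \mathbb{E}\bigl[e^{-\mu A^y}\bigr]\,\phi(e^{-y}\mu).$$
Writing $F := -\log\phi$ and dividing by $y$, letting $y \to 0^+$ produces a differential inequality controlling $\mu F'(\mu)$ from below (resp.\ above) by the limiting rate of $-\log \mathbb{E}[e^{-\mu A^y}]/y$. Since $A^y$ is, from Proposition~\ref{decomposition}, essentially the exponential functional of $V^{\uparrow}$ up to $\tau(V^{\uparrow},y)$, one has $A^y \geq e^{-y}\tau(V^{\uparrow}, y)$, and the Laplace transform of $\tau(V^{\uparrow},y)$ is governed, through absolute continuity with $V^{\sharp}$ via the scale function $W$, by the classical spectrally-negative identity $\mathbb{E}[e^{-\mu\tau(V^{\sharp}, y)}] = e^{-y\Phi_{V^{\sharp}}(\mu)}$, where $\Phi_{V^{\sharp}}$ is the right inverse of $\Psi_V(\kappa+\cdot)$. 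The assumptions on $\Psi_V$ give $\Phi_{V^{\sharp}}(\mu) \geq (\mu/C)^{1/\alpha}(1-o(1))$ (resp.\ $\leq (\mu/c)^{1/\alpha}(1+o(1))$) for large $\mu$, and integrating $\mu F'(\mu) \geq \Phi_{V^{\sharp}}(\mu)$ (resp.\ $\leq$) yields the desired Laplace-transform estimate.

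For \eqref{queuefonctexpo1} apply Markov's inequality: $\mathbb{P}(I(V^{\uparrow}) \leq x) \leq e^{\mu x}\phi(\mu)$; the choice $\mu \sim (Cx)^{-\alpha/(\alpha-1)}$ balances the two terms and produces the exponent $-(\alpha-1)(Cx)^{-1/(\alpha-1)}(1+o(1))$, with the $1+o(1)$ absorbed into the slack $\delta \in (0,1)$ for $x$ small. For \eqref{queuefonctexpo2} I would invoke a Tauberian theorem of Kasahara type, which converts the upper bound $-\log\phi(\mu) \leq \alpha c^{-1/\alpha}\mu^{1/\alpha}(1+o(1))$ into a lower bound $-\log\mathbb{P}(I(V^{\uparrow}) \leq x) \leq \alpha^{\alpha/(\alpha-1)}(cx)^{-1/(\alpha-1)}(1+o(1))$; Legendre duality between the rate functions $\mu^{1/\alpha}$ and $x^{-1/(\alpha-1)}$ produces the constant $\alpha^{\alpha/(\alpha-1)}$ exactly, with the slack $\delta>1$ absorbing the error terms.

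The principal obstacle is the precise identification of $\mathbb{E}[e^{-\mu A^y}]$, sharp enough to carry the exact constants $\alpha C^{-1/\alpha}$ and $\alpha c^{-1/\alpha}$ through the $y \to 0$ limit: the scale-function correction $W(y)/W(x)$ mediating the passage from $V^{\uparrow}$ to $V^{\sharp}$ must be shown to disappear cleanly in the limit, and no spurious multiplicative constant must appear in front of $\Phi_{V^{\sharp}}$ in the differential inequality. A secondary concern is that an off-the-shelf Tauberian statement may only deliver the correct order of magnitude, so producing the constant $\alpha^{\alpha/(\alpha-1)}$ in \eqref{queuefonctexpo2} may require a direct optimization (for instance a careful lower estimate of $\mathbb{E}[e^{-\mu I(V^{\uparrow})} \un_{I(V^{\uparrow}) \leq x}]$) in place of an abstract Tauberian appeal.
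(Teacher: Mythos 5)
Your high-level plan matches the paper's: pass through the random affine equation \eqref{rae} to get $\phi(\mu) = \mathbb{E}[e^{-\mu A^y}]\,\phi(e^{-y}\mu)$, derive a two-sided asymptotic for $-\log\phi(\mu)$ of the order $\alpha(\mu/c)^{1/\alpha}$, and convert to a tail bound by Markov's inequality for \eqref{queuefonctexpo1} and by the elementary split $\phi(\mu)\leq\mathbb{P}(I(V^{\uparrow})\leq x) + e^{-\mu x}$ for \eqref{queuefonctexpo2} (the paper uses exactly this direct optimization, not a Tauberian theorem). Your middle step is genuinely different from the paper's: you want a differential inequality for $F=-\log\phi$ obtained by dividing the functional equation by $y$ and letting $y\to 0$, whereas the paper fixes $y$, sums the geometric series $-\log\phi(\mu)=\sum_{k\geq 0}-\log\mathbb{E}[e^{-\mu e^{-ky}A^y}]$ (Proposition \ref{loglaplacefonctexpo}), and sends $y\to 0$ only at the very end, inside the explicit numerical constant $y/(1-e^{-y/\alpha})\to\alpha$. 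This order of limits matters: Proposition \ref{loglaplaceA} controls $-\log\mathbb{E}[e^{-\mu A^y}]$ only for $\mu$ large depending on $y$, so your $y\to 0$ with $\mu$ fixed is not covered by the estimates you would actually be able to prove.

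The more concrete gap is in your estimate of $A^y$. You chain $A^y\geq e^{-y}\tau(V^{\uparrow},y)$ with the identity $\mathbb{E}[e^{-\mu\tau(V^{\sharp},y)}]=e^{-y\Phi_{V^{\sharp}}(\mu)}$, but $\tau(V^{\uparrow},y)$ and $\tau(V^{\sharp},y)$ are not equal in law, and the readily available comparison (from Lemma \ref{dernierpassage2}) is $\tau(V^{\uparrow},y)\overset{sto}{\leq}\tau(V^{\sharp},y)$, which gives $-\log\mathbb{E}[e^{-\mu\tau(V^{\uparrow},y)}]\leq y\Phi_{V^{\sharp}}(\mu)$ — the wrong direction for the lower bound on $-\log\mathbb{E}[e^{-\mu A^y}]$ that \eqref{queuefonctexpo1} needs. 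The absolute-continuity correction $W(y)/W(x)$ you invoke cannot repair this because the process starts at $0$, where $W(0+)=0$ in the unbounded-variation case. The paper's Proposition \ref{loglaplaceA} instead works via the excursion factorization from Lemmas \ref{dernierpassage2} and \ref{=subexpbis}: $\int_0^{\tau(V^{\sharp},y)}e^{-V^{\sharp}(t)}dt$ is the independent sum of $\int_0^{\tau(V^{\uparrow},y)}e^{-V^{\uparrow}(t)}dt$ and a subordinator stopped at an independent exponential time $\tilde S_{\tilde T}$; the pointwise bound $\int_0^{\tau(V^{\sharp},y)}e^{-V^{\sharp}(t)}dt\geq e^{-y}\tau(V^{\sharp},y)$ (valid because $V^{\sharp}\leq y$ on $[0,\tau(V^{\sharp},y)]$, by absence of positive jumps) is then transferred to the first convolution factor at the cost of the correction $\log\mathbb{E}[e^{-\mu\tilde S_{\tilde T}}]$, which one must (and the paper does) show to be negligible compared with $y\Phi_{V^{\sharp}}$. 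Likewise for \eqref{queuefonctexpo2} you need the opposite bound on $-\log\mathbb{E}[e^{-\mu A^y}]$, which requires an upper bound on $A^y$ (not $A^y\geq e^{-y}\tau(V^{\uparrow},y)$); the paper gets it from $\int_0^{\tau(V^{\uparrow},y)}e^{-V^{\uparrow}(t)}dt\leq\tau(V^{\uparrow},y)\overset{sto}{\leq}\tau(V^{\sharp},y)$ together with control of the second piece $S_T$ of $A^y$ via its Laplace exponent $\Phi_S$.
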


Let us now recall how is usually quantified the asymptotic behavior of $\Psi_V$. We define, as in \cite{Bertoin} page 94, 
\begin{align*} \sigma & := \sup \left \{ \alpha \geq 0, \ \lim_{\lambda \rightarrow + \infty} \lambda^{- \alpha} \Psi_{V}(\lambda) = \infty \right \}, \\
\beta & := \inf \left \{ \alpha \geq 0, \ \lim_{\lambda \rightarrow + \infty} \lambda^{- \alpha} \Psi_{V}(\lambda) = 0 \right \}. \end{align*} 
Recall that $\Psi_{V^{\sharp}} (.) = \Psi_{V}(\kappa + .)$, so $\sigma$ and $\beta$ are identical whether they are defined from $\Psi_{V^{\sharp}}$ or $\Psi_{V}$. 

If $\Psi_{V}$ has $\alpha$-regular variation for $\alpha \in [1, 2]$, for example if $V$ is a (drifted or not) $\alpha$-stable L\'evy process with no positive jumps, we have $\sigma = \beta = \alpha$. 
Recall that $Q$ is the Brownian component of $V$, it is well known that $\Psi_{V} (\lambda) / \lambda^2$ converges to $Q/2$ when $\lambda$ goes to infinity. As a consequence: when $Q > 0$, $\Psi_{V}$ has $2$-regular variation, and when $Q=0$, $1 \leq \sigma \leq  \beta \leq 2$, where $1 \leq \sigma$ comes from the convexity of $\Psi_{V}$. 

\begin{remarque} \label{casvb1}
When $V$ has bounded variation, we know (see for example \cite{Bertoin} Section I.1) that the Brownian component of $V$ is null, the L\'evy measure $\nu$ of $V$ satisfies $\int_{-1}^0 |x| \nu(dx) < +\infty$ and $\gamma^* := - \gamma - \int_{-1}^0 x \nu(dx)$, the factor of $\lambda$ in the expression of $\Psi_V(\lambda)$, is positive (otherwise $V$ would be the opposite of a subordinator). It is thus easy to see that in this case $\Psi_{V} (\lambda) / \lambda$ converges to $\gamma^*$ when $\lambda$ goes to infinity, so $\sigma = \beta = 1$. In the next theorem, we sometimes assume that $\sigma > 1$, the reader should be aware that it excludes the case where $V$ has bounded variation. However, this case is quite easy and shall be treated in the remarks. 
\end{remarque}

We are now ready to state a general result on the asymptotic tails at $0$ of $I(V^{\uparrow})$: 

\begin{theo} \label{vposcompbrownnullequeue0}
{Recall that $1 \leq \sigma \leq  \beta \leq 2$.} We have 
\begin{eqnarray}
\forall \beta' > \beta, \ \underset{x \rightarrow 0}{\lim} \ x^{1/(\beta' - 1)} \log \left ( \mathbb{P} \left ( I(V^{\uparrow}) \leq x \right ) \right ) = - \infty, \label{majogeneralenew}
\end{eqnarray}
\begin{eqnarray}
\text{if} \ \sigma > 1, \  \forall \sigma' \in ]1, \sigma[, \ \underset{x \rightarrow 0}{\lim} \ x^{1/(\sigma' - 1)} \log \left ( \mathbb{P} \left ( I(V^{\uparrow}) \leq x \right ) \right ) = 0. \label{minogeneralenew}
\end{eqnarray}

\end{theo}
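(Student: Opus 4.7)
The plan is to derive both assertions from Theorem \ref{queuefonctexpo}, simply by unpacking the definitions of $\sigma$ and $\beta$ and letting the constants in the hypothesis of that theorem be optimized.

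First I would establish \eqref{majogeneralenew}. Fix $\beta' > \beta$; since the convexity of $\Psi_V$ forces $\beta \geq \sigma \geq 1$, we have $\beta' > 1$, and by the very definition of $\beta$ we have $\lambda^{-\beta'}\Psi_V(\lambda) \to 0$ as $\lambda \to \infty$. Hence, for any constant $C>0$ (which we will later let tend to $0$) and $\lambda$ large enough, $\Psi_V(\lambda) \leq C\lambda^{\beta'}$. Applying the first part of Theorem \ref{queuefonctevpo} with $\alpha = \beta'$ (and some $\delta\in]0,1[$ close to $1$) gives, for all $x$ small enough,
\[
x^{1/(\beta'-1)}\log\mathbb{P}\bigl(I(V^{\uparrow})\leq x\bigr) \;\leq\; -\,\frac{\delta(\beta'-1)}{C^{1/(\beta'-1)}}.
\]
Letting $x\to 0$ yields $\limsup_{x\to 0} x^{1/(\beta'-1)}\log\mathbb{P}(I(V^{\uparrow})\leq x) \leq -\delta(\beta'-1)/C^{1/(\beta'-1)}$; and now letting $C\to 0$ (the argument is valid for every fixed $C>0$) drives the right-hand side to $-\infty$. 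Since $x^{1/(\beta'-1)}\log\mathbb{P}(I(V^{\uparrow})\leq x)\leq 0$, the full limit is $-\infty$, which is \eqref{majogeneralenew}.

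Next I would establish \eqref{minogeneralenew} by the dual argument. Assume $\sigma>1$ and fix $\sigma'\in]1,\sigma[$. By definition of $\sigma$, $\lambda^{-\sigma'}\Psi_V(\lambda) \to +\infty$, so for any $c>0$ (to be sent to $+\infty$) and $\lambda$ large enough, $\Psi_V(\lambda)\geq c\lambda^{\sigma'}$. The second part of Theorem \ref{queuefonctexpo} applied with $\alpha=\sigma'$ (and some $\delta>1$ close to $1$) then yields, for $x$ small enough,
\[
x^{1/(\sigma'-1)}\log\mathbb{P}\bigl(I(V^{\uparrow})\leq x\bigr) \;\geq\; -\,\frac{\delta\,(\sigma')^{\sigma'/(\sigma'-1)}}{c^{1/(\sigma'-1)}}.
\]
Taking $\liminf_{x\to 0}$ and then sending $c\to+\infty$ gives $\liminf \geq 0$. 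Combined with the trivial upper bound $\limsup \leq 0$, the limit exists and equals $0$, which is \eqref{minogeneralenew}.

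The only delicate points are administrative: checking that $\beta'>1$ (automatic from $\beta\geq 1$) so that the first part of Theorem \ref{queuefonctexpo} applies, and noting that the constants $C$ and $c$ in the hypothesis of that theorem are genuinely free parameters, so that letting $C\downarrow 0$ and $c\uparrow +\infty$ is permissible. No new probabilistic input beyond Theorem \ref{queuefonctexpo} is required; this is a pure ``extract the sharp exponent from the definitions of $\sigma,\beta$'' argument.
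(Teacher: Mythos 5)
Your proof is correct and follows essentially the same route as the paper's: in both cases one fixes a free constant ($C$ or $c$) in the hypothesis of Theorem \ref{queuefonctexpo}, uses the definitions of $\beta$ and $\sigma$ to justify that the hypothesis holds for arbitrary $C$ (arbitrarily small) or $c$ (arbitrarily large), obtains a bound on $\limsup$ (resp.\ $\liminf$) of $x^{1/(\beta'-1)}\log\mathbb{P}(I(V^{\uparrow})\leq x)$, and then sends $C\to 0$ (resp.\ $c\to\infty$). The only cosmetic difference is your choice of $\delta$ close to $1$ where the paper takes $\delta=1/2$ and $\delta=2$, which is immaterial.
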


Theorem \ref{vposcompbrownnullequeue0} gives for $\mathbb{P} ( I(V^{\uparrow}) \leq x )$ a lower bound involving $\sigma$ and an upper bound involving $\beta$. In the case where $\Psi_V$ is regularly varying we can naturally expect a more precise result. Let us introduce the notation $\varphi_V(x) := \inf \{ \lambda \geq 0, \ \Psi_V(\lambda + \kappa) / \lambda > x \}$, where we recall that $\kappa = 0$ in the case where $V$ drifts to $+\infty$ or oscillates. 

\begin{theo} \label{varreg}

Assume that $\Psi_V$ has $\alpha$-regular variation at $+ \infty$ for some $\alpha \in ]1, 2]$. Then, 
\[ \log \left ( \mathbb{P} \left ( I(V^{\uparrow}) \leq x \right ) \right ) \underset{x \rightarrow 0}{\sim} - (\alpha - 1) \ \varphi_V \left ( \frac1{x} \right ). \]
\end{theo}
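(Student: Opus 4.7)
The plan is to obtain a sharp asymptotic for the Laplace transform $\phi(\lambda) := \mathbb{E}[\exp(-\lambda I(V^{\uparrow}))]$ as $\lambda \to +\infty$, and then to apply an exponential Tauberian theorem (Kasahara / de Bruijn) to translate this Laplace asymptotic into the asserted left-tail asymptotic for $I(V^{\uparrow})$ at $0$.

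To derive the Laplace asymptotic, I would exploit the self-decomposability of $I(V^{\uparrow})$ reflected in the random affine equation \eqref{rae}: it yields the functional equation $\phi(\lambda) = \mathcal{A}^y(\lambda)\, \phi(\lambda e^{-y})$ for every $y > 0$, where $\mathcal{A}^y(\lambda) := \mathbb{E}[e^{-\lambda A^y}]$ is determined by Proposition \ref{decomposition}. Setting $f := -\log \phi$, iterating in $y$ and letting $y \to 0^+$ produces the ODE $\lambda f'(\lambda) = \eta(\lambda)$, where
\[ \eta(\lambda) := \lim_{y \to 0^+} -\tfrac1y \log \mathcal{A}^y(\lambda), \]
so that $f(\lambda) = \int_0^\lambda \eta(\mu)\, d\mu/\mu$. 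Inserting the explicit form of $\mathcal{A}^y$ coming from Proposition \ref{decomposition} and invoking the $\alpha$-regular variation of $\Psi_V$, I would show that $\eta(\lambda) \sim \Psi_V^{-1}(\lambda)$ as $\lambda \to \infty$ (a function that is regularly varying of index $1/\alpha$). Karamata's theorem then integrates this to
\[ -\log \phi(\lambda) \sim \alpha\, \Psi_V^{-1}(\lambda) \quad (\lambda \to \infty). \]

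Since $\rho := 1/\alpha \in [1/2, 1)$ lies in the Kasahara admissible range, the exponential Tauberian theorem transfers the Laplace asymptotic of index $1/\alpha$ into a tail asymptotic of index $1/(\alpha-1)$. Chasing the Kasahara correspondence and using the relation between $\Psi_V^{-1}$ and $\varphi_V$ (both inherit regular variation from $\Psi_V$, the former of index $1/\alpha$, the latter of index $1/(\alpha-1)$) delivers
\[ \log \mathbb{P}(I(V^{\uparrow}) \le x) \sim -(\alpha - 1)\, \varphi_V(1/x) \quad (x \to 0^+), \]
the precise constant $(\alpha-1)$ being the outcome of the Kasahara factor $(1-\rho)\,\rho^{\rho/(1-\rho)}$ with $\rho=1/\alpha$ multiplied by $\alpha^{\,\alpha/(\alpha-1)}$, the latter arising from the leading coefficient $\alpha$ in front of $\Psi_V^{-1}(\lambda)$.

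The main technical obstacle is Step~1, namely the identification $\eta(\lambda) \sim \Psi_V^{-1}(\lambda)$. It requires a uniform-in-$\lambda$ small-$y$ expansion of $\mathcal{A}^y(\lambda)$, where $A^y$ is a convolution of two factors whose Laplace transforms are tractable via Proposition~\ref{decomposition} but whose behavior as $y \to 0^+$ must be tracked sharply. Under $\alpha$-regular variation of $\Psi_V$, this reduces to careful estimates of the scale-function-type quantities entering $\mathcal{A}^y$ together with an application of the Monotone Density Theorem; once this is secured, the Karamata integration and the Kasahara inversion are standard and yield the theorem.
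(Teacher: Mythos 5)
Your overall architecture matches the paper exactly: establish the Laplace asymptotic $-\log\mathbb{E}[e^{-\lambda I(V^{\uparrow})}] \sim \alpha\,\Phi_{V^{\sharp}}(\lambda)$ and then transfer it to the left tail by the exponential Tauberian theorem (Theorem 4.12.9 of \cite{regvar}, exactly what the paper invokes). The constant bookkeeping via $\varphi_V$ is also correct. Note only that the natural inverse appearing is $\Phi_{V^{\sharp}} = (\Psi_V(\kappa+\cdot))^{-1}$ rather than $\Psi_V^{-1}$; under $\alpha$-regular variation with $\alpha>1$ these are asymptotically equivalent, so this is cosmetic.

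The genuine gap lies in your Step~1, the proposed ODE route to the Laplace asymptotic. You want to differentiate the iteration $f(\lambda) = -\log\mathcal{A}^y(\lambda) + f(\lambda e^{-y})$ by sending $y\to 0^+$, which requires both (a) the existence of $\eta(\lambda) := \lim_{y\to 0^+}-\tfrac1y\log\mathcal{A}^y(\lambda)$ and (b) the differentiability of $f$. Neither is supplied, and neither follows from the control actually available on $\mathcal{A}^y$. Proposition \ref{loglaplaceA} yields, for \emph{fixed} $y>0$ and all $\epsilon>0$,
\[
(1-\epsilon)\,y\,\Phi_{V^{\sharp}}(e^{-y}\lambda) \;\le\; -\log\mathcal{A}^y(\lambda) \;\le\; (1+\epsilon)\,y\,\Phi_{V^{\sharp}}(\lambda),
\]
but only for $\lambda$ large enough \emph{depending on} $(y,\epsilon)$: the correction terms in its proof are of size $\log(1+\Phi_S(\lambda)/p)$ and $\log(1+\Phi_{\tilde S}(\lambda)/\eta^{\sharp}(I_y))$, which are negligible against $y\,\Phi_{V^{\sharp}}(\lambda)$ only in the regime $\lambda\to\infty$ at fixed $y$. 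Fixing $\lambda$ and letting $y\to 0^+$ reverses the order of limits: the leading term $y\,\Phi_{V^{\sharp}}(\lambda)\to 0$ while the corrections do not, so the bounds give no information about $\eta(\lambda)$, and you cannot conclude $\eta(\lambda)\sim\Phi_{V^{\sharp}}(\lambda)$. The paper avoids this obstruction entirely: it keeps $y>0$ fixed, writes $-\log\phi(\lambda)=\sum_{k\ge0}-\log\mathcal{A}^y(e^{-ky}\lambda)$, bounds the truncated sum using Proposition \ref{loglaplaceA} and the Karamata representation of $\Phi_{V^{\sharp}}$, takes $\limsup$ and $\liminf$ in $\lambda$ for fixed $y$, and only \emph{then} lets $\epsilon\to 0$ and $y\to 0$ in the resulting constants. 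That order of limits is precisely what makes the paper's argument close, and is the piece missing from your proposal. Unless you can produce a genuinely uniform small-$y$ expansion of $-\log\mathcal{A}^y(\lambda)$ (which would amount to re-doing the excursion-theoretic analysis of Lemmas \ref{=subexp}--\ref{=subexpbis} with explicit $y$-dependence), the paper's discrete-sum route is the one that works.
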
 This result has the following corollary: 

\begin{cor} \label{vposstablequeue0}
We assume that there is a positive constant $C$ and $\alpha \in ]1, 2]$ such that $\Psi_V(\lambda) \sim_{\lambda \rightarrow +\infty} C \lambda^{\alpha}$, then 
\[ \log \left ( \mathbb{P} \left ( I(V^{\uparrow}) \leq x \right ) \right ) \underset{x \rightarrow 0}{\sim} -\frac{\alpha - 1}{(Cx)^{\frac1{\alpha-1}}}. \]

\end{cor}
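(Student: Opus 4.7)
The plan is to deduce this corollary directly from Theorem \ref{varreg} by computing the asymptotic behavior of $\varphi_V(1/x)$ as $x \to 0$ under the stronger assumption $\Psi_V(\lambda) \sim C \lambda^{\alpha}$. Since $\Psi_V$ is in particular $\alpha$-regularly varying at $+\infty$ with $\alpha \in ]1,2]$, Theorem \ref{varreg} applies and gives
\[ \log \mathbb{P} \left ( I(V^{\uparrow}) \leq x \right ) \underset{x \rightarrow 0}{\sim} -(\alpha - 1)\, \varphi_V \!\left ( \tfrac{1}{x} \right ), \]
so everything reduces to identifying the asymptotic behavior of $\varphi_V$ at $+\infty$.

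Next I would note that because $\kappa$ is a fixed constant and $\alpha > 1$, the hypothesis $\Psi_V(\lambda) \sim C\lambda^{\alpha}$ implies
\[ \frac{\Psi_V(\lambda + \kappa)}{\lambda} \underset{\lambda \to +\infty}{\sim} C \lambda^{\alpha-1}, \]
which is a (positive) regularly varying function of index $\alpha - 1 > 0$, hence eventually strictly increasing to $+\infty$. By the classical theorem on asymptotic inverses of regularly varying functions, the generalized inverse $\varphi_V$, defined in the theorem as $\varphi_V(x) = \inf\{\lambda \geq 0 : \Psi_V(\lambda + \kappa)/\lambda > x\}$, is itself regularly varying of index $1/(\alpha - 1)$ at $+\infty$, with
\[ \varphi_V(y) \underset{y \rightarrow +\infty}{\sim} \left ( \frac{y}{C} \right )^{1/(\alpha-1)}. \]

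Substituting $y = 1/x$ gives $\varphi_V(1/x) \sim (Cx)^{-1/(\alpha-1)}$ as $x \to 0$, and combined with Theorem \ref{varreg} this yields exactly the claimed equivalent. There is no real obstacle here: the only point to check carefully is that the asymptotic inversion is legitimate, which follows from the fact that $\alpha - 1 > 0$ ensures strict monotonicity and divergence of $\Psi_V(\lambda+\kappa)/\lambda$. The whole content of the corollary is thus the routine conversion of the abstract inverse $\varphi_V$ appearing in Theorem \ref{varreg} into an explicit power function under the pure power asymptotic of $\Psi_V$.
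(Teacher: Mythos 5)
Your proposal is correct and is exactly the (implicit) derivation the paper has in mind: the corollary is stated immediately after Theorem \ref{varreg} with no separate proof, so the intended argument is precisely to substitute the explicit power asymptotic of $\Psi_V$ into the inverse function $\varphi_V$. Your observation that $\Psi_V(\lambda+\kappa)/\lambda \sim C\lambda^{\alpha-1}$ is regularly varying of positive index $\alpha-1$, so that the (left-continuous) inverse satisfies $\varphi_V(y) \sim (y/C)^{1/(\alpha-1)}$, is the whole content; one can also note that $\Psi_V(\cdot+\kappa)/\cdot$ is continuous, strictly increasing and diverges (by convexity and $\alpha>1$), so the asymptotic inversion is elementary and does not even require the full machinery of regular variation. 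It is worth remarking that Theorem \ref{queuefonctexpo} alone would \emph{not} suffice here, since its lower bound carries the constant $\alpha^{\alpha/(\alpha-1)}$ rather than $\alpha-1$, so going through Theorem \ref{varreg} is essential to get the sharp equivalent, as you did.
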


The above result is true in particular when, for some $\alpha \in ]1, 2]$, $V$ is an $\alpha$-stable spectrally negative L\'evy process (with adjonction or not of a drift). In particular, it agrees exactly with the tail \eqref{casmb} given in the next subsection for the particular case of a drifted Brownian motion. Also, we note that Corollary \ref{vposstablequeue0} is particularly interesting for applications to diffusions in random environments. Indeed, it allows to determine the exact value of the $\limsup$ of the renormalized supremum of the local time for a diffusion in a L\'evy environment $V$ that satisfies the assumptions of the Corollary, see \cite{psvech}. 

\begin{remarque} \label{majouniverselle}
Since $\Psi_{V} (\lambda) / \lambda^2$ has always a finite limit at $+ \infty$ we get, from Theorem \ref{queuefonctexpo}, that there is always a positive constant $K$ (depending on $V$) such that for $x$ small enough
\[ \mathbb{P} \left ( I(V^{\uparrow}) \leq x \right ) \leq e^{-K/x}. \]
\end{remarque}

\begin{remarque} \label{morepreceisevb}
Note that Theorem \ref{vposcompbrownnullequeue0} holds when $\beta = 1$ and $1/(\beta' - 1)$ can then equal any number in $]0, +\infty[$. When $V$ has bounded variation, we even have a stronger result: the support of the distribution of $I(V^{\uparrow})$ is $[1/\gamma^*, +\infty[$, so $\mathbb{P} ( I(V^{\uparrow}) \leq x )$ is null for $x$ small enough. This fact is the counterpart of Proposition 5.10(2) from \cite{Patieref10}. 
\end{remarque}

\begin{remarque} \label{estimdensite}
Recall that $I(V^{\uparrow})$ is unimodal. If $0$ was a mode, then we would have $\mathbb{P} ( I(V^{\uparrow}) \leq x ) \geq cx$ for some positive constant $c$ and $x$ small enough, which is incompatible with, for example, Remark \ref{majouniverselle}. As a consequence the density of $I(V^{\uparrow})$ is non-decreasing on a neighborhood of $0$. This allows to remark that Theorems \ref{queuefonctexpo}, \ref{vposcompbrownnullequeue0}, \ref{varreg}, Corollary \ref{vposstablequeue0} and Remarks \ref{majouniverselle}, \ref{morepreceisevb} are true for the density of $I(V^{\uparrow})$ in place of the distribution function $\mathbb{P} ( I(V^{\uparrow}) \leq . )$. 
\end{remarque}

When $V$ drifts to $+\infty$ we compare the left tails of $I(V^{\uparrow})$ and $I(V)$ in the next proposition. 

\begin{prop} \label{vposqueue0decond}

If $V$ drifts to $+\infty$, there is a positive constant $c$ such that for all positive $\epsilon$ and $x$ small enough, 
\[ \mathbb{P} \left ( I(V) \leq x \right ) \leq \mathbb{P} \left ( I(V^{\uparrow}) \leq x \right ) \leq \mathbb{P} \left ( I(V) \leq (1+\epsilon)x \right ) / c \epsilon x. \]

\end{prop}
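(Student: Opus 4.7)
The approach is to exploit a path decomposition of $V$ at its last zero. Set $L := \sup\{t \geq 0 : V(t) \leq 0\}$; since $V$ drifts to $+\infty$, $L$ is almost surely finite, and since $V$ has no positive jumps the process exits $(-\infty,0]$ continuously at time $L$, so $V(L) = 0$ almost surely. The classical last-exit decomposition for Markov processes then identifies the post-$L$ process $(V(L+s),\, s \geq 0)$ with $V^{\uparrow}$ (the Doob $h$-transform description of $V^{\uparrow}$ recalled in Section~1 is exactly the post-$L$ semigroup for $V$ drifting to $+\infty$) and makes it independent of the pre-$L$ trajectory. Setting $J := \int_0^L e^{-V(t)} \, dt \geq 0$, one obtains the identity
\[ I(V) \stackrel{\mathcal{L}}{=} J + I(V^{\uparrow}), \qquad J \perp I(V^{\uparrow}). \]

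The left inequality is immediate: since $J \geq 0$, the coupling gives $I(V) \geq I(V^{\uparrow})$ pointwise, whence $\mathbb{P}(I(V) \leq x) \leq \mathbb{P}(I(V^{\uparrow}) \leq x)$. For the right inequality, the event $\{J \leq \epsilon x\}$ is measurable with respect to the pre-$L$ trajectory and thus independent of $I(V^{\uparrow})$; on this event, $I(V^{\uparrow}) \leq x$ forces $I(V) \leq (1+\epsilon) x$. Consequently
\[ \mathbb{P}(I(V) \leq (1+\epsilon) x) \geq \mathbb{P}(J \leq \epsilon x) \cdot \mathbb{P}(I(V^{\uparrow}) \leq x), \]
and the proof reduces to showing $\mathbb{P}(J \leq \epsilon x) \geq c \epsilon x$ for some positive constant $c$ (independent of $\epsilon$) and all $x$ small enough.

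To obtain this lower bound, I would exploit that $\{\inf_{t \geq 0} V(t) \geq -1\}$ coincides with $\{\inf_{[0,L]} V \geq -1\}$ (since $V > 0$ on $(L, +\infty)$), and that on this event $J \leq e \cdot L$. Thus $\mathbb{P}(J \leq \delta) \geq \mathbb{P}(L \leq \delta/e,\, \inf V \geq -1)$, and the task reduces to estimating $\mathbb{P}(L \leq t)$ for $t$ small. By the Markov property at time $t$ combined with the scale-function identity $\mathbb{P}(V_y \text{ stays positive}) = \Psi_V'(0+) W(y)$,
\[ \mathbb{P}(L \leq t) = \Psi_V'(0+) \, \mathbb{E}\!\left[W(V(t)) \mathds{1}_{V(t) > 0}\right]. \]
The main obstacle is showing this expression is $\gtrsim t$ as $t \to 0^+$. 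In the unbounded-variation case one expects a polynomial rate such as $\sqrt{t}$ (using $\mathbb{E}[(V(t))^+] \gtrsim \sqrt{t}$ when $V$ has a Gaussian component, together with the local regularity of $W$ near $0$), which dominates $c t$ for small $t$. In the bounded-variation case with $\gamma^* > 0$, the alternative identity $\mathbb{P}(L = 0) = \Psi_V'(0+)/\gamma^* > 0$ yields $\mathbb{P}(J = 0) > 0$ and the conclusion is trivial, so only the unbounded-variation case requires genuine small-time analysis of $V$ and $W$.
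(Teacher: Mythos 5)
Your decomposition $I(V) \stackrel{\mathcal{L}}{=} J + I(V^{\uparrow})$ with $J := \int_0^L e^{-V(t)}\,dt$, where $L$ is the last zero of $V$, is exactly the paper's (cf.\ Lemma~\ref{dernierpassage2} and Proposition~\ref{vpospartoffonctclassic}), and your reduction of the right-hand inequality to showing $\mathbb{P}(J \leq \delta) \geq c\delta$ for small $\delta$ is also the same. Where you diverge is in how you try to establish that lower bound on the left tail of $J$, and this is where the proof has a genuine gap.

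You attempt a direct small-time estimate, passing through $\mathbb{P}(J \leq \delta) \geq \mathbb{P}(L \leq \delta/e,\ \inf V \geq -1)$ and then $\mathbb{P}(L \leq t) = \Psi_V'(0+)\,\mathbb{E}[W(V(t))\mathds{1}_{V(t)>0}]$. But you explicitly call the required estimate ``the main obstacle'' and only give a heuristic for it, and only in the special case where $V$ has a nontrivial Gaussian component (via $\mathbb{E}[(V(t))^+] \gtrsim \sqrt{t}$). The general unbounded-variation case (e.g.\ pure-jump spectrally negative processes with no Gaussian part, where both the small-time behavior of $V$ and the regularity of $W$ near $0$ depend on the L\'evy measure) is left entirely open. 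You also do not complete the reconciliation of the event $\{L \leq t\}$ with the auxiliary event $\{\inf V \geq -1\}$, which requires a separate argument since the probability $\mathbb{P}(\inf_{[0,t]} V < -1)$ can itself be of order $t$ when the L\'evy measure is finite on $(-\infty,-1)$. So what you provide is a plan, not a proof, of the key inequality.

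The paper closes this gap in a different and considerably cleaner way. Via excursion theory away from $0$ (Lemma~\ref{=subexpbisbis}), the coefficient $J = \int_0^{\mathcal{R}(V,0)} e^{-V(t)}\,dt$ is shown to have the same law as a pure-jump subordinator $S$ stopped at an independent exponential time $T$. Lemma~\ref{subexp} then proves $\mathbb{P}(S_T < x) \geq cx$ uniformly, for \emph{any} subordinator $S$ and independent exponential $T$, by establishing sub-additivity of the function $x \mapsto \mathbb{P}(S_T < x)$ via the strong Markov property at $\tau(S, x+)$ and the memorylessness of $T$. This bypasses all small-time analysis of $V$ and of $W$, and requires no case distinction between bounded and unbounded variation or between zero and non-zero Gaussian component. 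Your approach might be made to work with substantial additional effort, but as written it is incomplete precisely at the step the paper's Lemma~\ref{subexp} is designed to handle.
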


This implies that all the results we prove for the left tail of $I(V^{\uparrow})$ (Theorems \ref{queuefonctexpo}, \ref{vposcompbrownnullequeue0}, \ref{varreg}, Corollary \ref{vposstablequeue0} and Remarks \ref{majouniverselle}, \ref{morepreceisevb}) are true for $I(V)$ in place of $I(V^{\uparrow})$. In particular, the combination of Theorem \ref{varreg} with Proposition \ref{vposqueue0decond} allows to retrieve Proposition 1 of Pardo \cite{pardoonthefuture}. Reciprocally, we can combine Proposition \ref{vposqueue0decond} with the very precise result recently obtained by Patie and Savov \cite{Patieref10}, Theorem 5.24, to prove another result on the left tail of $I(V^{\uparrow})$ when $V$ does not oscillate and has unbounded variation.

\begin{prop} \label{encadrementpatiesavov}

Assume that $V$ has unbounded variation and does not oscillate. There are positive constants $K_1$ and $K_2$ such that for any positive $\delta > 1$ and $x$ small enough, 
\begin{align*}
K_1 (\delta - 1) \sqrt{\varphi_V' \left ( \frac{\delta}{x} \right )} \exp \left ( - \int_{\Psi_V'(\kappa)}^{\delta/ x} \frac{\varphi_V(r)}{r} dr \right ) & \leq \mathbb{P} \left ( I(V^{\uparrow}) \leq x \right ) \\
& \leq \frac{K_2}{(\delta-1) x} \sqrt{\varphi_V' \left ( \frac1{\delta x} \right )} \exp \left ( - \int_{\Psi_V'(\kappa)}^{1/\delta x} \frac{\varphi_V(r)}{r} dr \right ). 
\end{align*}


\end{prop}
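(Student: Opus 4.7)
The approach is exactly what the paper's surrounding text suggests: combine Proposition \ref{vposqueue0decond} with Theorem 5.24 of Patie and Savov \cite{Patieref10}.

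First I would reduce to the case where $V$ drifts to $+\infty$. Since $V$ has unbounded variation and does not oscillate, it drifts either to $+\infty$ or to $-\infty$. In the latter case I replace $V$ by $V^{\sharp}$: this process still has unbounded variation (bounded/unbounded variation depends only on the Gaussian component and on the local behavior near $0$ of the L\'evy measure, both of which are preserved by the Esscher transform $\Psi_{V^{\sharp}} = \Psi_V(\kappa+\cdot)$), it drifts to $+\infty$, and $I(V^{\uparrow}) = I((V^{\sharp})^{\uparrow})$ by the identity $V^{\uparrow} = (V^{\sharp})^{\uparrow}$. Directly from the definition $\varphi_V(x) = \inf\{\lambda \geq 0 : \Psi_V(\lambda+\kappa)/\lambda > x\}$ one reads off $\varphi_{V^{\sharp}} = \varphi_V$ and $\Psi_{V^{\sharp}}'(0) = \Psi_V'(\kappa)$, so the right-hand sides in the claimed inequalities are the same whether one works with $V$ or $V^{\sharp}$. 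Hence we may assume $V$ drifts to $+\infty$ and $\kappa = 0$.

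Proposition \ref{vposqueue0decond} then provides, for every $\epsilon > 0$ and $x$ small enough,
\begin{equation*}
\mathbb{P}(I(V) \leq x) \leq \mathbb{P}(I(V^{\uparrow}) \leq x) \leq \frac{\mathbb{P}(I(V) \leq (1+\epsilon)x)}{c\,\epsilon\,x}.
\end{equation*}
Under the present hypotheses on $V$ we may apply Theorem 5.24 of \cite{Patieref10}, which furnishes non-asymptotic two-sided controls on $\mathbb{P}(I(V) \leq y)$ of the form
\begin{equation*}
\tilde K_1 (\delta-1)\sqrt{\varphi_V'(\delta/y)} \exp\!\left(-\int_{\Psi_V'(0)}^{\delta/y}\frac{\varphi_V(r)}{r}\,dr\right) \leq \mathbb{P}(I(V) \leq y) \leq \tilde K_2 \sqrt{\varphi_V'(1/y)}\exp\!\left(-\int_{\Psi_V'(0)}^{1/y}\frac{\varphi_V(r)}{r}\,dr\right),
\end{equation*}
uniformly in $\delta > 1$ and $y$ small. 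For the lower bound of the Proposition I would set $y = x$ in the left half and chain it with the left inequality of the sandwich, yielding the claim with $K_1 = \tilde K_1$. For the upper bound I choose $\epsilon = \delta - 1$ in Proposition \ref{vposqueue0decond} and apply the right half at $y = \delta x$, which gives
\begin{equation*}
\mathbb{P}(I(V^{\uparrow}) \leq x) \leq \frac{\tilde K_2}{c(\delta-1)x}\sqrt{\varphi_V'(1/(\delta x))}\exp\!\left(-\int_{\Psi_V'(0)}^{1/(\delta x)}\frac{\varphi_V(r)}{r}\,dr\right),
\end{equation*}
i.e.\ the desired inequality with $K_2 = \tilde K_2 / c$.

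The main obstacle is careful bookkeeping of the $\delta$-parameter: one must check that the precise statement of Theorem 5.24 of \cite{Patieref10} produces bounds with exactly the asymmetric integration limits $\delta/y$ and $1/(\delta y)$ used here, and in particular that the $(\delta-1)$ factor on the lower side survives the composition with the elementary inequality of Proposition \ref{vposqueue0decond} rather than degenerating. A secondary, purely mechanical, verification is that the Esscher-shift identities $\varphi_{V^{\sharp}} = \varphi_V$ and $\Psi_{V^{\sharp}}'(0) = \Psi_V'(\kappa)$ transfer the bounds obtained for $V^{\sharp}$ back to $V$ without changing the form of the estimates.
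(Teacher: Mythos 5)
The reduction to $V^{\sharp}$ and the chaining with Proposition \ref{vposqueue0decond} are both correct, but the central citation is mis-stated in a way that hides the real work. Theorem 5.24 of \cite{Patieref10} does \emph{not} furnish two-sided, $\delta$-parametrized controls on the distribution function $\mathbb{P}(I(V)\leq y)$. As the paper's own remark right after the Proposition stresses, that theorem gives an \emph{asymptotic equivalent} (not an inequality), and it concerns the \emph{density} $m^{\sharp}$ of $I(V^{\sharp})$: roughly $m^{\sharp}(y)\sim \frac{C}{y}\sqrt{\varphi_V'(1/y)}\exp\bigl(-\int_{\Psi_V'(\kappa)}^{1/y}\varphi_V(r)r^{-1}\,dr\bigr)$ as $y\to 0$. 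The "claimed form" you attribute to Patie--Savov, with the $(\delta-1)$ prefactor on the lower side and the asymmetric arguments $\delta/y$ versus $1/(\delta y)$, is precisely what comes out \emph{after} one converts the density equivalent into bounds on the distribution function, and you have silently absorbed that conversion into the citation.

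That conversion is the missing ingredient. The paper observes that $I(V^{\sharp})$ is self-decomposable, hence unimodal, and that the mode is not $0$ (Remarks \ref{estimdensite} and \ref{majouniverselle} applied to $I(V^{\sharp})$), so $m^{\sharp}$ is non-decreasing on a right neighbourhood of $0$. This yields, for small $x$,
\[
\mathbb{P}\bigl(I(V^{\sharp})\leq x\bigr)\;\geq\;\int_{x/\delta}^{x} m^{\sharp}(z)\,dz\;\geq\;\Bigl(1-\tfrac1\delta\Bigr)x\,m^{\sharp}\Bigl(\tfrac{x}{\delta}\Bigr)
\quad\text{and}\quad
\mathbb{P}\bigl(I(V^{\sharp})\leq \delta x\bigr)\;\leq\;\delta x\,m^{\sharp}(\delta x),
\]
and only then does one insert the Patie--Savov density equivalent at the points $x/\delta$ and $\delta x$. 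Without this monotonicity step your argument has a genuine gap: a density equivalent alone does not produce a lower bound on the CDF (the density could in principle vanish on arbitrarily small intervals near a given point), and the $(\delta-1)$ factor you need has no source. You flagged exactly this worry in your last paragraph, so the diagnosis is simply that the "careful bookkeeping" you postponed is in fact the unimodality argument, which must be made explicit.
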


It seems that our previous results on the left tail of $I(V^{\uparrow})$ could be proved from Proposition \ref{encadrementpatiesavov} when $V$ has unbounded variation and does not oscillate. Moreover, it seems to give a more precise estimate compared to Theorem \ref{vposcompbrownnullequeue0} in the case where 
we do not have assumptions on the asymptotic behavior of $\Psi_V$. However, since Proposition \ref{encadrementpatiesavov} follows from the results that are known for the left tail of $I(V)$, and not from a direct study of $I(V^{\uparrow})$, it dramatically excludes the case where $V$ oscillates. This is why we emphasis on the method developed for the previous theorems that treats indistinctly the three cases of the three possible behaviors for $V$. 

Considering $I(V^{\uparrow})$ from the point of view of self-decomposable random variables, we can combine our results on the left tail of $I(V^{\uparrow})$ with the fine results of Sato, Yamazato \cite{satoyamazato} to get the following Theorem. It proves in particular the smoothness of the density of $I(V^{\uparrow})$ when $V$ has unbounded variation. 
\begin{theo} \label{asin13}
Assume that $V$ has unbounded variation, then 
\begin{itemize}
\item $I(V^{\uparrow})$ is a self-decomposable random variable of type $I_6$, in the terminology of \cite{satoyamazato}, 
\item The density of $I(V^{\uparrow})$ belongs to the Schwartz space. All its derivatives converge to $0$ at $+\infty$ and $0$, 
\item The distribution function $x \mapsto \mathbb{P} (I(V^{\uparrow}) \leq x)$ is $\log$-concave on $]0, +\infty[$. 
\end{itemize}
\end{theo}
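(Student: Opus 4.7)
The plan is to leverage the self-decomposability of $I(V^{\uparrow})$ established via the random affine equation \eqref{rae}, and then feed this into the fine classification results of Sato and Yamazato \cite{satoyamazato}. Self-decomposable laws on $]0,+\infty[$ have a Lévy measure of the form $k(x)/x\,dx$ with $k$ nonnegative and non-increasing, and the Sato--Yamazato types $I_1$ to $I_7$ are characterized essentially by the value of $k(0+)$ and by the position of the mode with respect to the support. Type $I_6$ is precisely the case $k(0+)=+\infty$ for which the density is automatically $C^{\infty}$ on the interior of the support. So my first goal is to identify the $k$-function of $I(V^{\uparrow})$ explicitly enough to show that $k(0+)=+\infty$ whenever $V$ has unbounded variation.

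To compute $k$ I would iterate the decomposition \eqref{rae}, writing $I(V^{\uparrow})\overset{\mathcal{L}}{=}\sum_{k\geq 0}e^{-ky}A_k^y$ and using the tool, announced after \eqref{rae}, that gives access to the Laplace transform of $A^y$ through two explicit convolution factors. From the Laplace transform of $A^y$ one recovers the characteristic exponent of $I(V^{\uparrow})$ by summing a geometric series, and comparing with the canonical self-decomposable representation identifies $k$. The key point will be to relate the local behavior of $k$ near $0$ to the local behavior at $+\infty$ of $\Psi_V$, exploiting $y\to 0$ in the decomposition: unbounded variation forces $\Psi_V(\lambda)/\lambda\to +\infty$, and this should translate into the divergence $k(0+)=+\infty$, placing $I(V^{\uparrow})$ in class $I_6$. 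This identification step is, I expect, the main technical obstacle, since it requires careful control of the auxiliary Laplace transforms announced in the introduction and their small-$y$ asymptotics.

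Once type $I_6$ is established, the density $f$ of $I(V^{\uparrow})$ is $C^{\infty}$ on $]0,+\infty[$. To upgrade this to the Schwartz space, I would combine three inputs. First, the exponential moments of Theorem \ref{finiteandexpomoments} force $f$ itself to have super-exponential decay at $+\infty$ (by Markov's inequality), and then a bootstrap based on \eqref{rae}, which expresses the law of $I(V^{\uparrow})$ as a convolution involving a rescaled copy of itself, propagates this fast decay to every derivative of $f$. Second, Remark \ref{majouniverselle} combined with Remark \ref{estimdensite} yields $f(x)\leq e^{-K/x}$ for $x$ small, and an analogous convolution bootstrap transfers this to the derivatives near $0$. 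Third, smoothness ensures that the derivatives are in fact continuous functions, so these decay estimates give membership in the Schwartz class. For the log-concavity of the distribution function $F(x)=\mathbb{P}(I(V^{\uparrow})\leq x)$ on $]0,+\infty[$, I would invoke the corresponding result of \cite{satoyamazato} for type $I_6$ self-decomposable laws, or, failing that, derive it from the fact that the density is non-decreasing near $0$ (Remark \ref{estimdensite}) and rapidly decreasing at $+\infty$, using the standard argument that the log of a decreasing, log-concave density integrates to a log-concave cumulative distribution; the subtlety is that log-concavity of $f$ should itself follow from the self-decomposable structure of type $I_6$.
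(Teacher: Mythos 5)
Your plan for the type-$I_6$ classification diverges substantially from the paper's and leaves the hard step unresolved. You propose to compute the canonical function $k$ from the series \eqref{kesten2.0} and show $k(0+)=\infty$, but you never carry out the identification you yourself flag as the "main technical obstacle," and you also omit a crucial preliminary: one must first establish that the drift term of the self-decomposable representation vanishes, equivalently that the left endpoint $\gamma_0$ of the support is $0$. Without that, growth of the characteristic exponent could be carried entirely by the drift and tell you nothing about $k(0+)$. Moreover, the assertion that "type $I_6$ is precisely the case $k(0+)=+\infty$" is inaccurate: type $I_7$ also has $k(0+)=\infty$, and the two are distinguished by $\int_0^1 k(u)\,du$. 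The paper takes a much shorter road: it proves $\gamma_0=0$ (from the decomposition of $A^y$, the drift-freeness of $\tau(V^\sharp,\cdot)$ when $V$ has unbounded variation, and Lemma \ref{subexp}), then rules out types $I_1$--$I_5$ by contradiction -- for those types with $\gamma_0=0$, Theorem 1.6 of \cite{satoyamazato} forces a heavier (polynomial) left tail for the density than the bound $f(x)\leq e^{-K/x}$ already obtained in Remarks \ref{estimdensite} and \ref{majouniverselle} -- and eliminates $I_7$ via $\int_0^1 k(u)\,du<\infty$, which is automatic for a positive random variable. Nothing about $k$ is computed.

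Your argument for Schwartz-space membership has a genuine gap: the convolution bootstrap does not converge to anything. Differentiating the identity $f=f_{A^y}*g$, with $g(x)=e^y f(e^y x)$, relates $f^{(n)}$ to $g^{(n)}$, which is the rescaled $f^{(n)}$ itself, so there is no lower-order quantity to bootstrap from; pushing the derivatives onto the other factor, $f^{(n)}=f_{A^y}^{(n)}*g$, requires smoothness of $f_{A^y}$ with quantitative control, which is not given and is essentially what is to be proved. The paper's Lemma \ref{schwartzselfdec} works on the Fourier side instead: for a type $I_6$ or $I_7$ self-decomposable law with finite moments of every order, it factors the characteristic function $\varphi_X=A_m B_m$ according to $k=m\mathds{1}_{[0,\epsilon_m]}+(k-m\mathds{1}_{[0,\epsilon_m]})$, proves $|A_m^{(n)}(\xi)|=\mathcal{O}(|\xi|^{-m})$ by an inductive Leibniz computation and $|B_m^{(n)}(\xi)|=\mathcal{O}(1)$ from the moment bounds, and concludes $|\varphi_X^{(n)}(\xi)|=\mathcal{O}(|\xi|^{-m})$ for every $m$; this, with $m$ arbitrary, gives the Schwartz class. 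The finite moments of all orders needed there come from Theorem \ref{finiteandexpomoments}, a dependence you do not make explicit. On the log-concavity of the distribution function you and the paper agree: both invoke the corresponding Sato--Yamazato result (Lemma 5.2 of \cite{satoyamazato}) for type $I_6$.
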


We note that, in the class $I$ of self-decomposable random variables (still in the terminology of \cite{satoyamazato}), the subclasses $I_6$ and $I_7$ are the ones for which the less is known about the left tails. 

In \cite{Patieref10}, the exponential functional $I(V)$ is also studied (when $V$ is assumed to drift to $+\infty$) as a self-decomposable random variable using the fine tools developed by \cite{satoyamazato}. More precisely, they give a representation of the L\'evy triplet of $I(V)$ which allows to determine its type as a self-decomposable random variable and the exact regularity of its density. 
In Lemma 5.14(2) of \cite{Patieref10}, they prove in particular that $I(V)$ is of type $I_6$ if and only if $V$ has non-zero Gaussian component or infinite L\'evy measure. This is also the necessary and sufficient condition under which the density of $I(V)$ is of class $C^{\infty}$, according to Proposition 5.10(3) of \cite{Patieref10}. 
In our case, we believe that the same condition is also equivalent with the fact that $I(V^{\uparrow})$ is of type $I_6$ and with the infinite differentiability its density. 
However, the proof requires some further developments in the study of $A^y$ and could be made in a subsequent study. Also, note that even though we expect to have the same regularity for $I(V^{\uparrow})$ as for $I(V)$ (when the latter is finite), the density of $I(V)$ never belongs to the Schwartz space for it is too heavy-tailed. 

In the spectrally positive case we first prove the finiteness and the existence of some finite exponential moments. We can state the result as follows: 

\begin{theo} \label{vneglapl}

The random variable $I(Z^{\uparrow})$ is almost surely finite and admits some finite exponential moments. 

\end{theo}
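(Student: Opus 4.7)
The plan is to mimic the hitting-time decomposition used for Theorem \ref{finiteandexpomoments}, adapted to the spectrally positive setting. First I would establish $I(Z^{\uparrow})<+\infty$ almost surely: $Z$ has unbounded variation hence is regular for both half-lines, and it drifts to $+\infty$, so Chaumont's identification of $Z^{\uparrow}$ (starting from $0$) with the post-minimum process of $Z$, namely $(Z(m+t)-Z(m))_{t\geq 0}$ where $m$ is the a.s.\ unique time at which $Z$ attains its overall minimum, is available. Provided $\mathbb{E}[Z(1)]$ is finite (a truncation of the big positive jumps of $Z$ reduces the general case to this one), $Z^{\uparrow}(t)/t\to\mathbb{E}[Z(1)]>0$ almost surely, giving a linear lower bound on $Z^{\uparrow}$ and hence $I(Z^{\uparrow})<+\infty$.

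For the exponential moments, fix $y>0$ and introduce the future infimum $\widehat Z(t):=\inf_{s\geq t}Z^{\uparrow}(s)$, a non-decreasing process going from $\widehat Z(0)=0$ to $+\infty$. Letting $\sigma_k:=\inf\{t\geq 0:\widehat Z(t)\geq ky\}$ gives $\sigma_0=0$, $\sigma_k\uparrow+\infty$, and $Z^{\uparrow}(t)\geq ky$ for every $t\geq\sigma_k$, whence
\[ I(Z^{\uparrow}) = \sum_{k=0}^{\infty}\int_{\sigma_k}^{\sigma_{k+1}} e^{-Z^{\uparrow}(t)}\,dt \leq \sum_{k=0}^{\infty} e^{-ky}(\sigma_{k+1}-\sigma_k). \]
The remaining task is to produce finite exponential moments for the right-hand side. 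A regenerative argument based on the strong Markov property of $Z^{\uparrow}$ at the (true) hitting times $\tau_k:=\tau(Z^{\uparrow},ky+)\leq\sigma_k$, combined with a stochastic comparison of $Z^{\uparrow}$ started at an overshoot $z\geq ky$ with $Z^{\uparrow}$ started at $ky$, supplies an i.i.d.\ upper bound on each $\sigma_{k+1}-\sigma_k$ by copies of $\sigma_1$. Choosing $y$ large enough that $e^{-y}\mathbb{E}[e^{\lambda\sigma_1}]<1$ for some small $\lambda>0$ lets one bound the exponential moment of the series via Jensen's inequality applied to a geometric probability measure.

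The main obstacle is thus the exponential tail $\mathbb{P}(\sigma_1>t)\leq Ce^{-ct}$. Equivalently, since $\sigma_1$ is the last time $Z^{\uparrow}$ lies below $y$, this reads $\mathbb{P}(\exists s\geq t:Z^{\uparrow}(s)<y)$. The spectrally positive case is more delicate than the spectrally negative one because $Z^{\uparrow}$ overshoots levels at its hitting times, so the clean identity $(Z^{\uparrow})^{\tau_y}\stackrel{\mathcal{L}}{=}Z^{\uparrow}_y$ used in the spectrally negative case is unavailable here. I would handle the tail by splitting on $\{Z^{\uparrow}(t)\leq y+1\}$: the latter event has exponentially decaying probability by a Cram\'er-type inequality derived from the linear drift of $Z^{\uparrow}$, while on its complement the Markov property at time $t$ reduces the question to estimating $\mathbb{P}(\inf_{s\geq 0}Z^{\uparrow}_{z}(s)<y)$ for $z>y+1$, which is controlled using the harmonic function of $Z^{\uparrow}$ (equivalently the renewal function of the downward ladder of $Z$, see \cite{Bertoin} Chapter VII and \cite{Doney} Chapter 8). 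Assembling these pieces yields the required exponential tail and completes the proof.
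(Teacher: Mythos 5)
Your overall strategy (exploit the eventual linear growth of $Z^{\uparrow}$, then bound the integral pathwise) is in the same spirit as the paper's, but the decomposition is different and the core regenerative step has a genuine gap.

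The paper's proof is more economical. It first shows (Lemma \ref{vneglapltpslin}) that the \emph{last exit time} $\mathcal{R}$ of $Z^{\uparrow}_y$ from below the affine barrier $y + \epsilon t$ has exponential tail, for $\epsilon$ so small that $Z - \epsilon \cdot$ still drifts to $+\infty$; then it bounds $I(Z^{\uparrow}_y) \leq \mathcal{R} + e^{-y}/\epsilon$ pathwise, the affine barrier converting the tail of the integral into a single deterministic constant; finally it transfers to $Z^{\uparrow}$ via the stochastic domination $I(Z^{\uparrow}) \overset{sto}{\leq} e^y I(Z^{\uparrow}_y)$, obtained from Chaumont's post-minimum decomposition of $Z^{\uparrow}_y$. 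No regenerative argument is needed: a single exponential-tail estimate suffices.

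Your plan, using flat levels $ky$ and the series $\sum_k e^{-ky}(\sigma_{k+1}-\sigma_k)$, requires in addition that the increments $\sigma_{k+1}-\sigma_k$ be stochastically dominated by i.i.d.\ copies of $\sigma_1$. This is where the proposal breaks down as written. The process $Z^{\uparrow}_z$ conditioned to stay positive is \emph{not} translation-invariant in its starting point: $Z^{\uparrow}_z - z$ and $Z^{\uparrow}$ are genuinely different laws (the conditioning becomes weaker as $z \to \infty$), so even granting a monotone coupling of $Z^{\uparrow}_z$ in $z$ (itself unproved here), the laws of the increments $\sigma_{k+1}-\sigma_k$ are not identically distributed across $k$, and you would need a uniform-in-$k$ and uniform-in-overshoot exponential tail bound that your outline does not supply. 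Your plan for the $\sigma_1$ tail (Cram\'er-type bound plus harmonic-function estimate after splitting on $\{Z^{\uparrow}(t) \leq y+1\}$) is also considerably heavier than the paper's one-line passage to the unconditioned process and the exponential law of $\sup(-Z + \epsilon\cdot)$.

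A minor point: the truncation of the large positive jumps to force $\mathbb{E}[Z(1)] < \infty$ is unnecessary. Since $Z$ is spectrally positive, $\mathbb{E}[(Z(1))^-] < \infty$ automatically, so $\mathbb{E}[Z(1)]$ is well-defined in $(0,+\infty]$ and the choice $\epsilon < \mathbb{E}[Z(1)]$ (or the LLN giving $Z^{\uparrow}(t)/t \to \mathbb{E}[Z(1)]$, possibly $+\infty$) works directly. Truncation also changes the conditioned process $Z^{\uparrow}$, so the claimed reduction to the truncated case is not as clean as suggested.
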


\begin{remarque} \label{compnoncondpecpos}
Many results are already known for the exponential functional $I(Z)$. In Patie \cite{patie2012} (see also Corollary 2.1 in \cite{ref6patie} and Corollary 1.3 in \cite{patieref8}) a decomposition of its density into a power series is given, from which follows smoothness and precise asymptotics at $+\infty$ for the density of $I(Z)$ and its derivatives. In particular, the right tail of $I(Z)$ is of order $x^{-\alpha}$ where $\alpha$ is the non-trivial zero of the Laplace exponent of $-Z$ (this fact is also a consequence of the results of \cite{rivero2005}, \cite{Maulik2006156}, see also \cite{Bertoinyor}, \cite{pardosurvey}, \cite{aristarivero}). The existence of finite exponential moments for $I(Z^{\uparrow})$ given by Theorem \ref{vneglapl} thus contrasts with the right tail known for $I(Z)$. Here again, this difference between the two tails comes from the fact that $Z^{\uparrow}$ stays positive whereas the absolute minimum of $Z$ is negative (it follows an exponential distribution with parameter $\alpha$) and has a strong influence on the right tail of $I(Z)$. 
\end{remarque}


We now provide estimates for the asymptotic tail at $0$ of $I(Z^{\uparrow})$. When $Z$ has positive jumps, they show that the tail of $I(Z^{\uparrow})$ is heavier than the one given for $I(V^{\uparrow})$ and this comes precisely from the effect of positive jumps. {Let us recall that, since $-Z$ is spectrally negative, $\tau(-Z,.)$ is a subordinator and let us denote by $\Phi_{-Z}$ its Laplace exponent: 
\[ \forall \lambda \geq 0, \ \Phi_{-Z}(\lambda) := - \log \left ( \mathbb{E} \left [ e^{-\lambda \tau \left ( -Z, 1 \right )} \right ] \right ). \]
According to Section VII in \cite{Bertoin}, $\Phi_{-Z}$ is the inverse of the restriction of $\Psi_{-Z}$ to $[\kappa_Z, +\infty[$: $\Phi_{-Z} = \Psi_{-Z |[\kappa_Z, +\infty[}^{-1}$. We can now state a first result that compares the conditioned case with the non-conditioned case. 

\begin{theo} \label{vnegqueue0}

$I(Z)$ is stochastically greater than $I(Z^{\uparrow})$. As a consequence, 
\[ \forall x > 0, \ \mathbb{P} \big ( I(Z) \leq x \big ) \leq \mathbb{P} \big ( I(Z^{\uparrow}) \leq x \big ). \]
More precisely, for any $\epsilon > 0$ there is a positive constant $C$ (that depends on the choice of $Z$ and on $\epsilon$) such that for all $x$ small enough we have 
\[ \frac{1-\epsilon}{\kappa_Z e} \Phi_Z \left (\frac{1}{x} \right ) \times \mathbb{P} \big ( I(Z) \leq x \big ) \leq \mathbb{P} \big ( I(Z^{\uparrow}) \leq x \big ) \leq C \Phi_Z \left (\frac{1}{x} \right ) \times \mathbb{P} \big ( I(Z) \leq (1+\epsilon)x \big ). \]

\end{theo}

Theorem \ref{vnegqueue0} has to be put in relation with results about left tails of exponential functionals of L\'evy processes. We now provide such results for L\'evy processes that possess positive jumps, which is the case we are, here, mostly interested in. Indeed, if the L\'evy measure of $Z$ is the zero measure then it is known, from the L\'evy-Khintchine formula, that $Z$ is a drifted Brownian motion. The exact asymptotic tail at $0$ of $I(Z^{\uparrow})$ is then given by Corollary \ref{vposstablequeue0}. Let $Y$ be a L\'evy process drifting to $+\infty$, that has positive jumps, and let $\pi$ denote its L\'evy measure (unless stated otherwise $Y$ is not assumed to be spectrally positive). We also define $\overline{\pi}(x) := \pi([x, +\infty[)$ for all $x > 0$. In the following theorem we link the left tail of the exponential functional $I(Y)$ with the L\'evy measure $\pi$.} 

\begin{theo} \label{queue0general}

There is a positive constant $C$ such that for all $x$ small enough, 
\begin{eqnarray}
\mathbb{P} \big ( I(Y) \leq x \big ) \geq C x \overline{\pi}(\log(1/x)). \label{suppnb}
\end{eqnarray}
Another bound is required, for example in the case where the support of $\pi$ is bounded from above or in the case where $\overline{\pi}$ decreases too rapidly. Denote by $S$ the supremum of the support of $\pi( . \cap ]0, +\infty[)$ (recall that this measure is assumed to be non-zero) and note that $S = +\infty$ if this support is not bounded from above. We fix any $c > 1/2S$ (where by convention, $1/2 \infty = 0$). Then, for all $x$ small enough we have 
\begin{eqnarray}
\mathbb{P} \big ( I(Y) \leq x \big ) \geq e^{-c (\log(x))^2}.  \label{suppborne}
\end{eqnarray}
In particular, if $Y=\alpha N$ where $N$ is a standard Poisson process and $\alpha > 0$, then we have 
\begin{eqnarray}
-\log \left ( \mathbb{P} \left ( I(\alpha N) \leq x \right ) \right ) \underset{x \rightarrow 0}{\sim} \frac1{2\alpha} \left ( \log (x) \right )^2. \label{caspoissonth}
\end{eqnarray}
Note that the tail in \eqref{caspoissonth} does not depend on the parameter of the standard Poisson process. 
\end{theo}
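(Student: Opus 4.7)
The three inequalities \eqref{suppnb}, \eqref{suppborne}, \eqref{caspoissonth} all rest on the same strategy: exhibit explicit events on which $I(Y)$ is forced to be small, and estimate their probabilities from below via the L\'evy--It\^o decomposition of $Y$ at a well-chosen threshold.

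For \eqref{suppnb} I argue via a single large jump. Set $h = \log(1/x)$ and write $Y = Y^{(h)} + J^{(h)}$, where $J^{(h)}$ is the compound Poisson process carrying the jumps of $Y$ of size $\geq h$ (rate $\overline{\pi}(h)$) and $Y^{(h)}$ is the independent complement. Let $T_1$ be the first jump time of $J^{(h)}$ and $\tilde Y(\cdot) := Y(T_1 + \cdot) - Y(T_1)$; by strong Markov, $\tilde Y$ is an independent copy of $Y$. Fix $c > 0$ and $M > 0$ with $\mathbb{P}(I(Y) \leq M) \geq 1/2$, and consider the event
\[
E_x = \{T_1 \leq x\} \cap \{\inf_{[0, T_1]} Y^{(h)} \geq -c\} \cap \{I(\tilde Y) \leq M\}.
\]
Right-continuity of the paths of $Y^{(h)}$ at $0$ makes the middle event have probability tending to $1$ as $x \to 0$. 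On $E_x$, splitting $I(Y) = \int_0^{T_1} e^{-Y(s)}ds + e^{-Y(T_1)}I(\tilde Y)$ together with $Y(T_1) \geq h - c$ gives $I(Y) \leq e^{c}x + Me^{c}e^{-h} \leq K x$. The three events defining $E_x$ depend on essentially independent pieces of the process, so $\mathbb{P}(E_x) \gtrsim x\overline{\pi}(h)$ for $x$ small; combined with $I(Y) \leq Kx$ on $E_x$, this yields \eqref{suppnb}.

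For \eqref{suppborne} the single-jump strategy is inadequate when $\overline{\pi}$ decreases too fast, so I push $Y$ up using a \emph{string} of jumps of size at least $S-\epsilon$, with $\epsilon > 0$ chosen small enough that $\lambda_\epsilon := \overline{\pi}(S - \epsilon) > 0$. Set $n = \lceil \log(1/x)/(S-\epsilon) \rceil$, let $T_1 < \cdots < T_n$ be the first $n$ jump epochs of the associated compound Poisson process, and $\tau_k = T_k - T_{k-1}$. The key observation is that on the interval $[T_{k-1}, T_k)$ the process $Y$ has already been pushed up by at least $(k-1)(S-\epsilon) - c$, so the contribution of that interval to $I(Y)$ is at most $e^{c}\tau_k e^{-(k-1)(S-\epsilon)}$; one can therefore tolerate $\tau_k$ up to $a_k := \tfrac{x}{2 e^{c} n}e^{(k-1)(S-\epsilon)}$ while still having $I(Y) \leq Kx$ on
\[
\{\tau_k \leq a_k,\ 1 \leq k \leq n\} \cap \{\inf_{[0, T_n]} Y^{(S-\epsilon)} \geq -c\} \cap \{I(\tilde Y) \leq M\}.
\]
By independence of the exponentially distributed $\tau_k$'s,
\[
\prod_{k=1}^n \mathbb{P}(\tau_k \leq a_k) = \prod_{k=1}^n \bigl(1 - e^{-\lambda_\epsilon a_k}\bigr),
\]
whose logarithm, via $1 - e^{-y} \geq y/(1+y)$ and Stirling, behaves like $-nh + (S-\epsilon)n(n-1)/2 + O(n\log n)$, where $h = \log(1/x)$. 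Substituting $n \approx h/(S-\epsilon)$, the two leading terms combine to $-h^2/(2(S-\epsilon))(1 + o(1))$, and picking $\epsilon$ small so that $1/(2(S-\epsilon)) < c$ yields \eqref{suppborne}.

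For \eqref{caspoissonth}, the lower bound on $\mathbb{P}(I(\alpha N) \leq x)$ is immediate from \eqref{suppborne} applied with $S = \alpha$. The matching upper bound uses the explicit representation $I(\alpha N) = \sum_{k \geq 0} e^{-\alpha k} E_k$ with i.i.d.\ $E_k \sim \mathrm{Exp}(1)$: the event $\{I(\alpha N) \leq x\}$ forces $E_k \leq xe^{\alpha k}$ for every $k$, so by independence and $1 - e^{-y} \leq y$,
\[
\mathbb{P}(I(\alpha N) \leq x) \leq \prod_{k=0}^{\lfloor h/\alpha \rfloor}\bigl(1 - e^{-xe^{\alpha k}}\bigr) \leq \prod_{k=0}^{\lfloor h/\alpha \rfloor} x e^{\alpha k},
\]
whose logarithm is $-h^2/(2\alpha)(1 + o(1))$. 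The main obstacle in the whole argument is the joint optimization of the time budgets $a_k$ in \eqref{suppborne}: the geometric profile $a_k \propto e^{(k-1)(S-\epsilon)}$ is what halves the exponent from $1/(S-\epsilon)$ (which a naive choice $a_k \approx x/n$ would produce) to $1/(2(S-\epsilon))$, and it is exactly this gain that produces the factor $2$ in the constraint $c > 1/(2S)$.
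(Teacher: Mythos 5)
Your one-big-jump strategy for \eqref{suppnb} is the same as the paper's, but the last step has a genuine gap: on $E_x$ you only obtain $I(Y) \leq Kx$ with $K = e^c(1+M) > 1$, and the inequality $\mathbb{P}(I(Y) \leq Kx) \gtrsim x\,\overline{\pi}(\log(1/x))$ does not yield \eqref{suppnb}. Replacing $x$ by $x/K$ produces $\overline{\pi}(\log(K/x)) = \overline{\pi}(\log(1/x) + \log K)$, and since $\overline{\pi}$ is non-increasing this is \emph{smaller} than $\overline{\pi}(\log(1/x))$; without any regular-variation hypothesis the ratio may in fact tend to $0$ (take $\overline{\pi}(y) = e^{-y^2}$). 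The paper avoids this by keeping the jump threshold at exactly $\log(1/x)$ and tuning the other budgets so that the contributions add up to at most $x$ on the event: it asks $J \leq x/3$, $\inf_{[0,J]} Y^b \geq -\log 2$, $I(\tilde Y) \leq 1/6$, giving $\int_0^J e^{-Y} \leq 2x/3$ and $e^{-Y(J)}I(\tilde Y) \leq 2x\cdot 1/6 = x/3$. Your argument can be repaired the same way (shrink the time budget to $\delta x$ and cap $I(\tilde Y)$ by a fixed constant chosen so that $e^c\delta + e^c\cdot(\text{cap}) \leq 1$), but the deduction as written fails for general $\overline{\pi}$.

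For \eqref{suppborne} and \eqref{caspoissonth} you take a genuinely different route, and it works. The paper first establishes \eqref{caspoissonth} by writing $I(\alpha N) = p^{-1}\sum_k e^{-\alpha k}e_k$, computing the log-Laplace transform $\sum_k\log(1+\lambda e^{-\alpha k}/p) \sim (\log\lambda)^2/2\alpha$, and converting to a tail estimate by a Markov/reverse-Markov argument; it then deduces \eqref{suppborne} by thinning a compound Poisson component $Y^{\eta_0,1}$ carrying jumps in $[\gamma_1,\gamma_2)$ and using $I(Y) \leq e^{-\inf Y^{\eta_0,2}}\,I(\gamma_1 N)$. You instead build the geometric string of jump epochs directly and lower-bound $\prod_k(1-e^{-\lambda_\epsilon a_k})$ with geometric budgets $a_k \propto e^{(k-1)(S-\epsilon)}$; this exposes the combinatorics transparently (and the $1/2$ in the exponent really does come from this optimization, as you say), while the paper's approach isolates the Poisson case as a reusable lemma. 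Your upper bound in \eqref{caspoissonth} via $\prod_{k\leq h/\alpha}(1-e^{-pxe^{\alpha k}}) \leq \prod_{k\leq h/\alpha}pxe^{\alpha k}$ is a lighter alternative to the Laplace-transform argument and is correct (you omitted the Poisson rate $p$, but it only shifts $\log x$ by an $O(1)$ amount). Two minor points: for $S=\infty$ the quantity $\overline{\pi}(S-\epsilon)$ is not defined; you should pick a finite level $S'$ with $\overline{\pi}(S')>0$ in place of $S-\epsilon$, which gives any $c>1/(2S')$ and hence any $c>0$. And the caveat ``$I(Y)\leq Kx$'' in \eqref{suppborne} is harmless here, unlike in \eqref{suppnb}, because $(\log(K/x))^2 \sim (\log(1/x))^2$, so the substitution $x\mapsto x/K$ does not change the asymptotic exponent.
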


{The estimates given in the above theorem have to be put in relation with Theorem 2.19 in \cite{bersteingamma} which, in our case (the case of an unkilled L\'evy process $Y$), ensures that $\mathbb{P} ( I(Y) \leq x ) = o(x)$.} It is interesting to put in relation Remark \ref{majouniverselle} and the combination Theorem \ref{queue0general}-Theorem \ref{vnegqueue0}. We see that in the presence of positive jumps, the asymptotic tail at $0$ of $I(Z^{\uparrow})$ (or $I(Z)$) is always at least of order $e^{-c (\log(x))^2}$ for some constant $c$. To the contrary, in the absence of positive jumps, the asymptotic tail at $0$ of the exponential functional is always at most of order $e^{-c/x}$ for some constant $c$. The existence of jumps thus plays a determinant role for the asymptotic tail at $0$ of an exponential functional. 

Also, note that \eqref{suppborne} gives exactly the minimal possible order of $\mathbb{P} ( I(Y) \leq x )$, for $Y$ lying among L\'evy processes drifting to $+\infty$ with positive jumps. Indeed, \eqref{suppborne} is always satisfied for such $Y$ and \eqref{caspoissonth} shows that this bound is optimal for the exponential functional of a multiple of a standard Poisson process. 


When $\overline{\pi}$ does not decrease too rapidly then \eqref{suppnb} is preferable to \eqref{suppborne}. Actually, we can improve \eqref{suppnb} in the spectrally positive case under a regularity hypothesis on $\pi$. 
Let us introduce the following assumption on $\pi$ that is made in \cite{pardo2013} (to study the asymptotic tails of exponential functionals of subordinators) {and in \cite{aristarivero}}: 
\[ (A) : \ \ \ \exists \alpha > 0 \ \text{s.t.} \ \forall y \in \mathbb{R}, \ \frac{\overline{\pi}(x+y)}{\overline{\pi}(x)} \underset{x \rightarrow + \infty}{\longrightarrow} e^{-\alpha y}. \]

Our last result refines \eqref{suppnb} when $Y$ is spectrally positive and $\pi$ satisfies $(A)$. It complements the study \cite{patie2012} of densities of exponential functionals of spectrally positive L\'evy processes and we prove it as a consequence of the factorization identity in \cite{ref6patie} together with a result of \cite{pardo2013} about subordinators. 

\begin{prop} \label{vnegprecisequeue0}

Assume that $Y$ is spectrally positive and $(A)$ is satisfied for some $\alpha > 0$. We denote by $m_Y$ the density of $I(Y)$. {Then $\mathbb{E}[I(Y)^{-\alpha}] < +\infty$ and 
\[ m_Y(x)  \underset{x \rightarrow 0}{\sim} \mathbb{E}[I(Y)^{-\alpha}] \ \overline{\pi}(\log (1/x)). \]}
\end{prop}

{\begin{remarque}
In the case of a spectrally positive $Y$, the result of Proposition \ref{vnegprecisequeue0} together with some easy calculations on regularly varying functions allow to retrieve Theorem 7 of \cite{aristarivero}. That result has a proof, in \cite{aristarivero}, similar to the one that we give for Proposition \ref{vnegprecisequeue0}, and it also refines \eqref{suppnb}: it gives an equivalent at $0$ of the form $C x \overline{\pi}(\log(1/x))$ (with $C = \mathbb{E}[I(Y)^{-\alpha}]/(1+\alpha)$) for the distribution function of an exponential functional of a general L\'evy process that has positive jumps, satisfy assumption $(A)$, and a non-Cram\'er condition. Note that in \cite{aristarivero} they talk about negative jumps because, contrarily to us, there is no minus in their definition of the exponential functional. 
\end{remarque}
}



The study of the spectrally positive case does not go as far as the study of the spectrally negative case. The reason for this is twofold. First, we do not have, in the spectrally positive case, a decomposition of the law of $I(Z^{\uparrow})$ as in \eqref{rae}, which deprives us of an important tool for the study. Secondly we do not need, in the applications, the results on the exponential functional to be as precise, in the spectrally positive case, as in the spectrally negative case. Indeed, in the study of a diffusion in a spectrally negative L\'evy environment $V$ drifting to $-\infty$, a random variable $\mathcal{R}$ appears. Its law is the convolution of the laws of $I(V^{\uparrow})$ and $I(\hat V^{\uparrow})$, where $\hat V := -V$ is the dual process of $V$ and is thus spectrally positive. The combination of the above theorems shows that for some things the behavior of $I(V^{\uparrow})$ is dominant in the study of $\mathcal{R}$ when $V$ has jumps. In particular, the asymptotic tail at $0$ of $\mathcal{R}$ is the same as the one of $I(V^{\uparrow})$. 

The rest of the paper is organized as follows. In Section \ref{prelim} we prove some preliminary results on $V^{\uparrow}$. In Section \ref{decomp} we prove Theorem \ref{finiteandexpomoments} and establish Proposition \ref{decomposition}, the decomposition of the law of $I(V^{\uparrow})$. In Section \ref{tail} we prove Theorems \ref{queuefonctexpo}, \ref{vposcompbrownnullequeue0} and \ref{varreg} by studying the asymptotic behavior of the Laplace transform of $I(V^{\uparrow})$, and we establish Propositions \ref{vposqueue0decond} and \ref{encadrementpatiesavov}. In Section \ref{densit} we prove Theorem \ref{asin13} looking at $I(V^{\uparrow})$ as a self-decomposable random variable. Section \ref{brief} is devoted to the spectrally positive case, it contains the proofs of Theorem \ref{vneglapl} and Theorem \ref{vnegqueue0}. In Section \ref{queue0sautspos} we study the asymptotic tails at $0$ of exponential functionals in the presence of positive jumps, this is where we prove Theorem \ref{queue0general} and Proposition \ref{vnegprecisequeue0}. 

\subsection{The example of drifted Brownian motion conditioned to stay positive} \label{exemplembd}

The simplest case is the intersection of the spectrally positive and the spectrally negative case, that is, when $V$ is a drifted Brownian motion. Most of the results of this paper are already known in this case. We define the $\kappa$-drifted Brownian motion by $W_{\kappa}(t) := W(t) - \frac{\kappa}{2} t$, where $W$ is a standard Brownian motion. It is known that the two processes $W_{\kappa}^{\uparrow}$ and $W_{-\kappa}^{\uparrow}$ are equal in law. This follows, for example, from the expression of the generator of $W_{\kappa}^{\uparrow}$, or from the fact that for positive $\kappa$, the Laplace exponent of $W_{\kappa}^{\sharp}$ is equal to the Laplace exponent of $W_{-\kappa}$, so the processes conditioned to stay positive have the same law. We thus only consider positive $\kappa$. 

It is known (see (4.6) in Andreoletti, Devulder \cite{AndDev}, see also Lemma 6.6 in \cite{advech}) that $I(W_k^{\uparrow})$ is almost surely finite and has Laplace transform
\[ \mathbb{E} [e^{-\lambda I(W_k^{\uparrow})}] = \frac{\frac1{2^{\kappa} \Gamma(1 + \kappa)} (2 \sqrt{2 \lambda})^{\kappa}}{I_{\kappa} (2 \sqrt{2 \lambda})}, \]
where $I_{\kappa}$ is a modified Bessel function. This expression can also be written
\begin{eqnarray}
\mathbb{E} [e^{-\lambda I(W_k^{\uparrow})}] = \frac1{\Gamma(1 + \kappa)} \frac1{\sum_{j=0}^{+\infty} \frac{(2 \lambda)^{j}}{j! \Gamma(1 + j + \kappa)}}. \label{laplcasmb}
\end{eqnarray}
It is easy to see that it can be analytically extended in a neighborhood of $0$, so the random variable $I(W_k^{\uparrow})$ admits some finite exponential moments. {More precisely, we can see that for  any $\lambda \in [-(\kappa+1)/2, 0], j \geq 0$, $\frac{(2 \lambda)^{2j}}{(2j)! \Gamma(1 + 2j + \kappa)} + \frac{(2 \lambda)^{2j+1}}{(2j+1)! \Gamma(1 + 2j + 1 + \kappa)} \geq 0$ (and even $>0$ for any $j\geq 1$). Therefore, $\sum_{j=0}^{+\infty} \frac{(2 \lambda)^{j}}{j! \Gamma(1 + j + \kappa)} > 0$ for $\lambda \in [-(\kappa+1)/2, 0]$ so that \eqref{laplcasmb} can be analytically extended on this interval, and even on a neighborhood of $-(\kappa+1)/2$. This shows that there exists $\epsilon_{\kappa} > 0$ such that 
\[ \forall \lambda < \epsilon_{\kappa} + (\kappa+1)/2, \ \mathbb{E} \left [ e^{\lambda I(W_k^{\uparrow})} \right ] < + \infty. \]
Comparatively, the condition \eqref{momentsexpo} from Theorem \ref{finiteandexpomoments} shows that 
\[ \forall \lambda < \Psi_{W_k}(\kappa + 1) = (\kappa+1)/2, \ \mathbb{E} \left [ e^{\lambda I(W_k^{\uparrow})} \right ] < + \infty, \]
which is a little weaker. This shows in particular that, at least in the brownian case, the condition \eqref{momentsexpo} is not optimal.} 

An easy calculation on the asymptotic of \eqref{laplcasmb} when $\lambda$ goes to infinity yields
\begin{eqnarray}
- \log \left ( \mathbb{E} [e^{-\lambda I(W_k^{\uparrow})}] \right ) \underset{\lambda \rightarrow + \infty}{\sim} 2 \sqrt{2 \lambda}. \label{loglaplcasmb}
\end{eqnarray}
Combining \eqref{loglaplcasmb} and De Bruijn's Theorem (see Theorem 4.12.9 in \cite{regvar}) we get
\begin{eqnarray}
- \log \left ( \mathbb{P} \left ( I(W_k^{\uparrow}) \leq x \right ) \right ) \underset{x \rightarrow 0}{\sim} \frac{2}{x}. \label{casmb}
\end{eqnarray}
This estimate can be seen as a particular case of Corollary \ref{vposstablequeue0} (when the latter is applied in the case of a drifted Brownian motion). 

As the expression \eqref{laplcasmb} extends to the imaginary line, we get the expression of the characteristic function of $I(W_k^{\uparrow})$ which can be proved, using estimates on modified Bessel functions, to belong to the Schwartz space. Therefore, the density of $I(W_k^{\uparrow})$, which is the Fourier transform of its characteristic function, belongs to the Schwartz space, which agrees with Theorem \ref{asin13}. 

\section{Preliminary results on $V^{\uparrow}$} \label{prelim}

\subsection{Exponential functionals and excursion theory}

We fix $y>0$. In this subsection, we use excursions to prove that the integrals of exponential $V^{\uparrow}(\tau(V^{\uparrow}, y) + .)$ or $(V^{\sharp}(t), \ 0 \leq t \leq \tau(V^{\sharp}, y))$ stopped at their last passage time at $y$ and $0$ respectively are equal in law to some subordinators stopped at independent exponential random variables. 

It is easy to see that the regularity of $\left \{ y \right \}$ for the Markovian processes $V^{\sharp}_y$ and $V^{\uparrow}_y$ is equivalent to the regularity of $\left \{ 0 \right \}$ for $V$ (or $V^{\sharp}$) which in turn, according to Corollary VII.5 in \cite{Bertoin}, is equivalent to the fact that $V$ has unbounded variation. The property of $\left \{ y \right \}$ being instantaneous for $V^{\sharp}_y$ and $V^{\uparrow}_y$ is equivalent to the same property of $\left \{ 0 \right \}$ for $V$, but this is a well known property of spectrally negative L\'evy processes that are not opposite of subordinators. $\left \{ y \right \}$ is thus always instantaneous for $V^{\sharp}_y$ and $V^{\uparrow}_y$ and the only alternative is whether it is regular or not, which corresponds to the fact that $V$ has or not unbounded variation.

We apply excursion theory away from $y$ (see Section IV of \cite{Bertoin}). Let us denote by $L_y^{\uparrow}$ (respectively $L_y^{\sharp}$) a local time at $y$ of the process $V^{\uparrow}_y$ (respectively $V^{\sharp}_y$) and $\eta_y^{\uparrow}$ (respectively $\eta_y^{\sharp}$) the associated excursion measure. We denote $\eta^{\sharp}$ for $\eta_0^{\sharp}$.  The inverse of the local time $L_y^{\uparrow, -1}$ (respectively $L_y^{\sharp, -1}$) is a subordinator and $V^{\uparrow}_y$ (respectively $V^{\sharp}_y$) can be represented as a Poisson point process on the set of excursions, with intensity measure $\eta_y^{\uparrow}$ (respectively $\eta_y^{\sharp}$). Note that this is also true in the irregular case (when $V$ has bounded variation) if the local time $L_y^{\uparrow}$ (respectively $L_y^{\sharp}$) is defined artificially as in \cite{Bertoin}, Section IV.5. In this case, the excursion measure is proportional to the law of the first excursion and in particular the total mass of the excursion measure is finite. 


Given $\xi : [0, \zeta] \rightarrow \mathbb{R}$ an excursion away from $y$, we define $\zeta(\xi)$ to be its life-time, $H_y(\xi) := \sup_{[0, \zeta(\xi)]} \xi - y$ its height and $G(\xi) := \int_0^{\zeta(\xi)} e^{-\xi(t)} dt$. 

For any $h > 0$, we consider $IP_h$, $FP_h$ and $N_h$ three subsets that make a partition of the excursions of $V^{\sharp}$ away from $y$. These three subsets are respectively: the set of excursions higher than $h$ that stay positive before reaching $y + h$, the set of excursions of height smaller than $h$ that stay positive, the set of excursions that reach $]-\infty, 0]$ before ever reaching $y+h$: 
\[ IP_h := \left \{ \xi, \ H_y(\xi) \geq h, \ \forall t \in [0, \tau(\xi, y+h)], \xi(t) > 0 \right \}, \] 
\[ FP_h := \left \{ \xi, \ H_y(\xi) < h, \ \forall t \in [0, \zeta(\xi)], \xi(t) > 0 \right \}, \ \ \ N_h := \left \{ \xi, \ \tau(\xi, ]-\infty, 0]) < \tau(\xi, y+h) \right \}. \]
$IP_{\infty}$, $FP_{\infty}$ and $N_{\infty}$ are defined as the monotone limits of the sets $IP_h$, $FP_h$ and $N_h$: $IP_{\infty}$ is the set of infinite excursions that stay positive, $FP_{\infty}$ is the set of finite excursions that stay positive and $N_{\infty}$ is the set of excursions that reach $]-\infty, 0]$. 
$\eta^{\sharp}_y(IP_{\infty})$ and $\eta^{\sharp}_y(N_{\infty})$ are always finite whereas $\eta^{\sharp}_y(FP_{\infty})$ is infinite in the regular case (when $V$ has unbounded variation). Also, note that $\eta^{\sharp}_y(IP_{\infty}) = 0$ if $V$ oscillates. 

\begin{lemme} \label{=subexp}

Let $y$ be positive and let $S$ be a pure jump subordinator with L\'evy measure $G \eta^{\sharp}_y(.\cap FP_{\infty} )$, the image measure of $\eta^{\sharp}_y(.\cap FP_{\infty} )$ by $G$. Let $T$ be an exponential random variable with parameter $\eta_y^{\sharp}(IP_{\infty}) + \eta_y^{\sharp}(N_{\infty})$, independent of $S$. We have 
\begin{eqnarray} \int_{\tau(V^{\uparrow}, y)}^{\mathcal{R}(V^{\uparrow}, y)} e^{- V^{\uparrow}(t)} dt \overset{\mathcal{L}}{=} S_T, \label{=subexp1} \end{eqnarray}
where $\tau(., .)$ and $\mathcal{R}(., .)$ are defined in the introduction. 
\end{lemme}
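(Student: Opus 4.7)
I would prove the lemma in three steps.

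First, by the Markov property of $V^{\uparrow}$ at $\tau(V^{\uparrow},y)$, together with the fact that $V^{\uparrow}$ reaches $y$ continuously because it has no positive jumps, the shifted process $(V^{\uparrow}(\tau(V^{\uparrow},y)+s),\,s\ge 0)$ has the same law as $V^{\uparrow}_y$. This reduces \eqref{=subexp1} to showing
\[
\int_0^{\mathcal{R}(V^{\uparrow}_y,y)} e^{-V^{\uparrow}_y(t)}\,dt \;\overset{\mathcal{L}}{=}\; S_T.
\]

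Second, I would construct $S_T$ as an integral functional of $V^{\sharp}_y$ via excursion theory at $y$. The point process of excursions, of intensity $\eta^{\sharp}_y$, is split by the partition $FP_{\infty}\cup IP_{\infty}\cup N_{\infty}$ into three independent Poisson point processes by the standard thinning property. The first atom of the $IP_{\infty}\cup N_{\infty}$ process occurs at a local time $T$ that is exponential of parameter $\eta^{\sharp}_y(IP_{\infty})+\eta^{\sharp}_y(N_{\infty})$, and is independent of the $FP_{\infty}$ part; and by the L\'evy--It\^o construction, the process $t\mapsto\sum_{s\le t,\,\xi^{\sharp}_s\in FP_{\infty}}G(\xi^{\sharp}_s)$ is a pure jump subordinator of L\'evy measure $G\eta^{\sharp}_y(\cdot\cap FP_{\infty})$, i.e.\ distributed as $S$. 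Letting $\sigma^{\sharp}$ denote the (real-time) start of the first escape excursion, the equality $S_T=\int_0^{\sigma^{\sharp}}e^{-V^{\sharp}_y(t)}\,dt$ holds almost surely, since up to $\sigma^{\sharp}$ all time spent away from $\{y\}$ lies in $FP_{\infty}$-excursions.

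Third, I would match the two sides by running a parallel excursion analysis for $V^{\uparrow}_y$: the excursions of $V^{\uparrow}_y$ at $y$ form a PPP of intensity $\eta^{\uparrow}_y$, and because $V^{\uparrow}_y$ drifts to $+\infty$, the local time at $\mathcal{R}(V^{\uparrow}_y,y)$ is exponential of parameter $\eta^{\uparrow}_y(IP_{\infty})$, so that the integral in question is a subordinator of L\'evy measure $G\eta^{\uparrow}_y(\cdot\cap FP_{\infty})$ evaluated at that exponential time. The proof then reduces to the two excursion-measure identifications, for a compatible choice of the local times $L^{\uparrow}_y$ and $L^{\sharp}_y$,
\[
\eta^{\uparrow}_y(\cdot\cap FP_{\infty})=\eta^{\sharp}_y(\cdot\cap FP_{\infty}),\qquad \eta^{\uparrow}_y(IP_{\infty})=\eta^{\sharp}_y(IP_{\infty})+\eta^{\sharp}_y(N_{\infty}).
\]
The first follows from the Doob $h$-transform connecting $V^{\uparrow}$ and $V^{\sharp}$ (with $h$ the scale function of $V^{\sharp}$) applied to excursions that start and end at $y$: such excursions receive the trivial weight $h(y)/h(y)=1$. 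The second is obtained by letting $z\to+\infty$ in the scale function identity $\mathbb{P}(\tau(V^{\sharp}_y,z)<\tau(V^{\sharp}_y,]-\infty,0]))=W^{\sharp}(y)/W^{\sharp}(z)$ (where $W^{\sharp}$ denotes the scale function of $V^{\sharp}$ and $W^{\sharp}(z)\to+\infty$ since $V^{\sharp}$ does not drift to $-\infty$): it shows that the rate of termination of the recurrent regime at $y$ on the $V^{\uparrow}_y$ side absorbs both the $IP_{\infty}$- and the $N_{\infty}$-rate of $V^{\sharp}_y$.

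\textbf{Main obstacle.} The technical heart of the proof is the second identification above, which asserts that under the conditioning the probabilistic mass of the $N_{\infty}$-excursions of $V^{\sharp}_y$ (which are killed by the $h$-transform) is transferred into the $IP_{\infty}$-rate of $V^{\uparrow}_y$. Making this rigorous requires careful normalization of the two local times across the $h$-transform and a use of the asymptotics of $W^{\sharp}$ at $+\infty$; in the irregular case (when $V$ has bounded variation) one would use instead the artificial local times of \cite{Bertoin}, Section IV.5. A shortcut which avoids most of this accounting is to compute the Laplace transforms of both sides of \eqref{=subexp1} by excursion theory on the $V^{\sharp}_y$ side and verify their equality directly.
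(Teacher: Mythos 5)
Your overall strategy is genuinely different from the paper's. The paper never works with the excursion measure $\eta^{\uparrow}_y$ of $V^{\uparrow}_y$ at all: it builds $(V^{\uparrow}_y(t),\,0\le t\le\tau(V^{\uparrow}_y,y+h))$ directly from the Poisson point process of excursions of $V^{\sharp}_y$, conditioned to have no $N_h$-jump before the first $IP_h$-jump, then observes that by a standard Poisson point process property the local time of the first jump in $IP_h\cup N_h$ is exponential of parameter $\eta^{\sharp}_y(IP_h)+\eta^{\sharp}_y(N_h)$ and is independent of whether that jump falls in $IP_h$ or $N_h$, and finally lets $h\to\infty$. You instead propose to identify $\eta^{\uparrow}_y$ itself and match it against $\eta^{\sharp}_y$. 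Your first two steps, and the formula $S_T=\int_0^{\sigma^{\sharp}}e^{-V^{\sharp}_y(t)}dt$, are fine and in the same spirit as the paper's Lemma \ref{=subexpbisbis}.

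There is, however, a concrete gap in your proof of the second excursion-measure identity. You assert that $W^{\sharp}(z)\to+\infty$ because $V^{\sharp}$ does not drift to $-\infty$. This is false in the non-oscillating case: by Theorem VII.8 of \cite{Bertoin}, when $V^{\sharp}$ drifts to $+\infty$ (which happens whenever $V$ does not oscillate) one has $W^{\sharp}(z)\to 1/\Psi'_{V^{\sharp}}(0+)=1/\Psi'_V(\kappa)<+\infty$. Only in the oscillating case is $W^{\sharp}(\infty)=+\infty$. Moreover, even once this limit is computed correctly, the scale-function identity you invoke is a statement entirely about $V^{\sharp}_y$ (letting $z\to\infty$ it reproduces $\eta^{\sharp}_y(IP_{\infty})/(\eta^{\sharp}_y(IP_{\infty})+\eta^{\sharp}_y(N_{\infty}))=W^{\sharp}(y)/W^{\sharp}(\infty)$); it does not by itself yield a relation between $\eta^{\uparrow}_y$ and $\eta^{\sharp}_y$, so it cannot show that the $V^{\uparrow}_y$-escape rate ``absorbs'' both the $IP_{\infty}$ and $N_{\infty}$ rates. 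To make this rigorous you would need exactly the device the paper uses: a compatible realization of the local times through the finite-$h$ conditioning, followed by the Poisson independence of the first jump's type and time, and the limit $h\to\infty$. You do acknowledge the normalization issue, but the specific argument you offer to resolve it does not work; as written, your proof of the crucial identity $\eta^{\uparrow}_y(IP_{\infty})=\eta^{\sharp}_y(IP_{\infty})+\eta^{\sharp}_y(N_{\infty})$ is both based on a false claim and incomplete even with that claim corrected.
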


\begin{proof}

$V^{\uparrow}(\tau(V^{\uparrow}, y) + .)$ has the same law as $V^{\uparrow}_y$, from the Markov property applied to $V^{\uparrow}$ at time $\tau(V^{\uparrow}, y)$ and the absence of positive jumps. As a consequence, $\int_{\tau(V^{\uparrow}, y)}^{\mathcal{R}(V^{\uparrow}, y)} e^{- V^{\uparrow}(t)} dt$ is equal in law to $\int_0^{\mathcal{R}(V_y^{\uparrow}, y)} e^{- V_y^{\uparrow}(t)} dt$ and we are left to prove the result for the latter. 

Then, let us fix $h >0$. As it is mentioned in the introduction, $(V^{\uparrow}_y(t), \ 0 \leq t \leq \tau(V^{\uparrow}_y, y +h))$ is equal in law to $(V^{\sharp}_y(t), \ 0 \leq t \leq \tau(V^{\sharp}_y, y +h))$ conditionally on $\{ \tau(V^{\sharp}_y, y+h) < \tau(V^{\sharp}_y, ]-\infty, 0]) \}$. 

$(V^{\sharp}_y(t), \ 0 \leq t \leq \tau(V^{\sharp}_y, y +h))$ can be built from a Poisson point process on the set of excursions with intensity measure $\eta_y^{\sharp}$. $(V^{\uparrow}_y(t), \ 0 \leq t \leq \tau(V^{\uparrow}_y, y +h))$ can be built from this same process, conditioned not to have jumps in $N_h$ before its first jump in $IP_h$. In other words, we build $(V^{\uparrow}_y(t), \ 0 \leq t \leq \tau(V^{\uparrow}_y, y +h))$ from the process of jumps in $FP_h$ stopped at the exponential time (that has parameter $\eta_y^{\sharp}(IP_h) + \eta_y^{\sharp}(N_h)$) at which occurs the first jump in $IP_h \cup N$ and conditionally to the fact that this jump belongs to $IP_h$. Then, {note that the processes of jumps in $FP_h$ and in $IP_h \cup N$ are independent from one another,} and by a property of Poisson point processes the fact that the first jump in $IP_h \cup N$ belongs to $IP_h$ is independent of the time when this jump occurs. As a consequence, $(V^{\uparrow}_y(t), \ 0 \leq t \leq \tau(V^{\uparrow}_y, y +h))$ is built from a Poisson point process with intensity measure $\eta_y^{\sharp}(. \cap FP_h)$, until an independent exponential time, $T_h$, of parameter $\eta_y^{\sharp}(IP_h) + \eta_y^{\sharp}(N_h)$ where we pick, independently, a jump following the law $\eta_y^{\sharp}(. \cap IP_h)/\eta_y^{\sharp}(IP_h)$ (and we only keep the part of this excursion that is before its hitting time of $y+h$). 


Let $\mathcal{R}^h(V_y^{\uparrow}, y)$ be the last passage time of $V_y^{\uparrow}$ at $y$ before $\tau(V_y^{\uparrow}, y+h)$: 
\[ \mathcal{R}^h(V_y^{\uparrow}, y) := \sup \{ t \in [0, \tau(V_y^{\uparrow}, y+h)], \ V_y^{\uparrow}(t) = y \}. \]
From above, if $(p_s)_{s \geq 0}$ is a Poisson point process in $FP_{\infty}$ with intensity measure $\eta^{\sharp}_y(.\cap FP_{\infty} )$ and if $T_h$ is an independent exponential random variable with parameter $\eta_y^{\sharp}(IP_{h}) + \eta_y^{\sharp}(N_h)$, then $(V^{\uparrow}_y(t), \ 0 \leq t \leq \mathcal{R}^h(V_y^{\uparrow}, y))$ is built by putting aside the excursions of the process $(p_s \mathds{1}_{p_s \in FP_h}, \ 0 \leq s \leq T_h)$. Since $V_y^{\uparrow}$ converges almost surely to $+\infty$, $\mathcal{R}^h(V_y^{\uparrow}, y)$ converges almost surely to $\mathcal{R}(V_y^{\uparrow}, y)$, the last passage time at $y$, when $h$ goes to infinity. On the other hand, $\eta_y^{\sharp}(IP_h) + \eta_y^{\sharp}(N_h)$ converges to $\eta_y^{\sharp}(IP_{\infty}) + \eta_y^{\sharp}(N_{\infty})$ when $h$ goes to infinity. 
As a consequence $T_h$ converges to an exponential random variable $T$ with parameter $\eta_y^{\sharp}(IP_{\infty}) + \eta_y^{\sharp}(N_{\infty}) > 0$. Also, $FP_h$ increases to $FP_{\infty}$ when $h$ goes to infinity. 
Then, identifying the limits when $h$ goes to infinity of both $(V^{\uparrow}_y(t), \ 0 \leq t \leq \mathcal{R}^h(V_y^{\uparrow}, y))$ and $(p_s \mathds{1}_{p_s \in FP_h}, \ 0 \leq s \leq T_h)$, we get that $(V^{\uparrow}_y(t), \ 0 \leq t \leq \mathcal{R}(V_y^{\uparrow}, y))$ is built by putting aside the excursions of the process $(p_s, \ 0 \leq s \leq T)$, where $T$ is an exponential random variable with parameter $\eta_y^{\sharp}(IP_{\infty}) + \eta_y^{\sharp}(N_{\infty})$ and is independent from $(p_s, \ s \geq 0)$. 

Now, remark that $\int_0^{\mathcal{R}(V_y^{\uparrow}, y)} e^{- V_y^{\uparrow}(t)} dt$ is the sum of the images by $G$ of the excursions of $(V^{\uparrow}_y(t), \ 0 \leq t \leq \mathcal{R}(V_y^{\uparrow}, y))$ away from $y$. We thus have 
\begin{eqnarray} \int_0^{\mathcal{R}(V_y^{\uparrow}, y)} e^{- V_y^{\uparrow}(t)} dt \overset{\mathcal{L}}{=} \sum_{0 < s < T} G(p_s). \label{=subexp3} \end{eqnarray}


By properties of Poisson point processes, the process in the right hand side, $\sum_{0 < s < .} G(p_s)$, is the sum of the jumps of a Poisson point process on $\mathbb{R}_+$, with intensity measure $G \eta_y^{\sharp}(.\cap FP_{\infty} )$. Thus, from the L\'evy-Ito decomposition, it has the same law as the subordinator $S$, which yields the result. 

\end{proof}

\begin{remarque} \label{casvb0}
In the case where $V$ has bounded variation, the total mass of $\eta_y^{\sharp}$ is finite so $S$ is only a compound Poisson process. In particular $S_T$  is then null with a positive probability. 
\end{remarque}

{In the rest of this subsection} $y>0$ is still fixed (and arbitrary). Let $\mathcal{R}^y(V^{\sharp}, 0)$ be the last passage time of $V^{\sharp}$ at $0$ before $\tau(V^{\sharp}, y)$: 
\[ \mathcal{R}^y(V^{\sharp}, 0) := \sup \{ t \in [0, \tau(V^{\sharp}, y)], \ V^{\sharp}(t) = 0 \}. \]
In order to study the trajectory of $V^{\sharp}$ before $\mathcal{R}^y(V^{\sharp}, 0)$, we now consider excursions away from $0$. 
Let $I_y$ and $F_y$ denote respectively the subset of excursions higher than $y$ and lower than $y$: 
\[ I_y := \left \{ \xi, \ H_0(\xi) \geq y \right \}, \ \ \ F_y := \left \{ \xi, \ H_0(\xi) < y \right \}. \]
A simpler proof as for Lemma \ref{=subexp} gives the following lemma. 

\begin{lemme} \label{=subexpbis}

Let $S$ be a pure jump subordinator with L\'evy measure $G \eta^{\sharp}(.\cap F_y )$, the image measure of $\eta^{\sharp}(.\cap F_y )$ by $G$. Let $T$ be an exponential random variable with parameter $\eta^{\sharp}(I_y)$ which is independent of $S$. We have 
\begin{eqnarray} \int_0^{\mathcal{R}^y(V^{\sharp}, 0)} e^{- V^{\sharp}(t)} dt \overset{\mathcal{L}}{=} S_T. \label{=subexp4} \end{eqnarray}

\end{lemme}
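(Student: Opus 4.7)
The plan is to imitate the proof of Lemma \ref{=subexp}, but now using excursions of $V^{\sharp}$ away from $0$ instead of away from $y$. The argument is simpler because no conditioning on an event of the form $\{\tau(V^{\sharp}, y+h) < \tau(V^{\sharp}, ]-\infty, 0])\}$ is required: we work directly with the natural excursion Poisson point process of $V^{\sharp}$.

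First, I would represent $(V^{\sharp}(t),\ 0 \le t \le \tau(V^{\sharp}, y))$ as the Poisson point process $(p_s)_{s \ge 0}$ on the excursion space with intensity $\eta^{\sharp}$, read up to the local-time instant at which the first excursion of height $\ge y$ is produced. Since $V^{\sharp}$ does not drift to $-\infty$ and is not the opposite of a subordinator, one has $\eta^{\sharp}(I_y) > 0$ and this instant is a.s.\ finite. On the time interval $[0, \mathcal{R}^y(V^{\sharp}, 0)]$ every completed excursion of $V^{\sharp}$ away from $0$ lies in $F_y$, because it returns to $0$ without ever reaching $y$; and $\mathcal{R}^y(V^{\sharp}, 0)$ is precisely the right-endpoint of the last such excursion before the first one in $I_y$.

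Next I apply the standard marking property of Poisson point processes: splitting $(p_s)_{s \ge 0}$ along the partition $\{F_y, I_y\}$ yields two independent Poisson point processes with respective intensities $\eta^{\sharp}(\,\cdot\,\cap F_y)$ and $\eta^{\sharp}(\,\cdot\,\cap I_y)$. The local time $T$ at which the first atom in $I_y$ appears is therefore exponentially distributed with parameter $\eta^{\sharp}(I_y)$, and is independent of the family $(p_s \mathds{1}_{p_s \in F_y},\ s \ge 0)$.

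Finally, since the zero set of $V^{\sharp}$ has Lebesgue measure zero, the integral decomposes over excursions away from $0$ as
\[
\int_0^{\mathcal{R}^y(V^{\sharp}, 0)} e^{-V^{\sharp}(t)}\,dt \;=\; \sum_{0 < s < T} G(p_s)\, \mathds{1}_{p_s \in F_y},
\]
and the right-hand side is the value at the independent exponential time $T$ of the atomic sum of a Poisson point process with intensity $G \eta^{\sharp}(\,\cdot\,\cap F_y)$. By the L\'evy--It\^o decomposition this is a pure-jump subordinator $S$ with L\'evy measure $G \eta^{\sharp}(\,\cdot\,\cap F_y)$, which gives exactly $S_T$, as required. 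The only mildly delicate point — my main obstacle, such as it is — is the bounded-variation case, where $\{0\}$ is irregular for $V^{\sharp}$: the local time is then defined artificially as in \cite{Bertoin}, Section IV.5, the excursion measure has finite total mass, and the excursion process is in fact compound Poisson; but the splitting and Poisson-point-process identifications go through unchanged, as noted in Remark \ref{casvb0}.
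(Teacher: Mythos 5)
Your proof is correct and implements exactly the simplification the paper indicates when it says "A simpler proof as for Lemma \ref{=subexp} gives the following lemma": you work with the unconditioned excursion Poisson point process of $V^{\sharp}$ away from $0$, split it along $\{F_y, I_y\}$ instead of along the three-way partition $\{FP_h, IP_h, N_h\}$, and drop the $h \to \infty$ limiting step since there is no conditioning on avoiding $]-\infty,0]$. The final identification of $\sum_{0<s<T} G(p_s)\mathds{1}_{p_s\in F_y}$ as a subordinator with Lévy measure $G\eta^{\sharp}(\cdot\cap F_y)$ evaluated at the independent exponential time $T$ matches the paper's argument at \eqref{=subexp3}, and you correctly flag the bounded-variation (compound Poisson excursion process) variant already covered by Remark \ref{casvb0}.
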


In the case where $V$ does not oscillate, $V^{\sharp}$ drifts to $+\infty$ and we need to study the trajectory before $\mathcal{R}(V^{\sharp}, 0)$, the last passage time of $V^{\sharp}$ at $0$. We still consider excursions away from $0$. Let $I$ and $F$ denote respectively the subsets of infinite and finite excursions: 
\[ I := \left \{ \xi, \ \zeta(\xi) = +\infty \right \}, \ \ \ F := \left \{ \xi, \ \zeta(\xi) < +\infty \right \}. \]
A similar proof as for Lemma \ref{=subexp} gives the following lemma. 

\begin{lemme} \label{=subexpbisbis}

We assume that $V$ does not oscillate. Let $S$ be a pure jump subordinator with L\'evy measure $G \eta^{\sharp}(.\cap F )$, the image measure of $\eta^{\sharp}(.\cap F )$ by $G$. Let $T$ be an exponential random variable with parameter $\eta^{\sharp}(I)$ which is independent of $S$. We have 
\begin{eqnarray} \int_0^{\mathcal{R}(V^{\sharp}, 0)} e^{- V^{\sharp}(t)} dt \overset{\mathcal{L}}{=} S_T. \label{=subexp4bis} \end{eqnarray}

\end{lemme}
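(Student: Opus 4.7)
The plan is to adapt the argument of Lemma~\ref{=subexp}, replacing excursions away from $y$ by excursions away from $0$, and replacing the stopping condition on the first excursion in $IP_h \cup N_h$ by the occurrence of the first infinite excursion. Since $V$ does not oscillate, $V^{\sharp}$ drifts to $+\infty$ by construction, so $L_0^{\sharp}(+\infty)$ is almost surely finite; equivalently, $\eta^{\sharp}(I) \in \,]0, +\infty[$ and $L_0^{\sharp}(+\infty)$ is exponential with parameter $\eta^{\sharp}(I)$. In the bounded variation case the measure $\eta^{\sharp}$ is itself finite and the whole argument below reduces to one on a compound Poisson point process, which still works (and in that case $S_T$ puts positive mass at $0$, as in Remark~\ref{casvb0}).

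Next I would apply a standard Poisson thinning: the excursion process $(p_s)_{s \geq 0}$ of $V^{\sharp}$ away from $0$, with intensity $\eta^{\sharp}$, splits into two independent Poisson point processes, one of finite excursions with intensity $\eta^{\sharp}(\cdot \cap F)$ and one of infinite excursions with intensity $\eta^{\sharp}(\cdot \cap I)$. Let $T$ denote the local-time value at which the first infinite excursion starts. By Poisson thinning, $T$ is exponential with parameter $\eta^{\sharp}(I)$ and is independent of the Poisson point process of finite excursions. The very definition of $\mathcal{R}(V^{\sharp}, 0)$ as the last passage time of $V^{\sharp}$ at $0$ shows that $(V^{\sharp}(t),\ 0 \leq t \leq \mathcal{R}(V^{\sharp}, 0))$ is exactly the concatenation of the finite excursions indexed by $(0, T)$.

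It then follows, as in the end of the proof of Lemma~\ref{=subexp}, that
\[ \int_0^{\mathcal{R}(V^{\sharp}, 0)} e^{-V^{\sharp}(t)}\, dt \overset{\mathcal{L}}{=} \sum_{0 < s < T} G(p_s), \]
where $(p_s)$ is restricted to the Poisson point process of finite excursions. By the L\'evy-It\^o decomposition applied to $(G(p_s))_{s \geq 0}$, the process $\bigl( \sum_{0 < s < u} G(p_s) \bigr)_{u \geq 0}$ is a pure jump subordinator with L\'evy measure $G \eta^{\sharp}(\cdot \cap F)$, independent of $T$. Evaluating at $u = T$ gives the equality in law with $S_T$.

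The main obstacle here is not conceptual but consists in carefully verifying the two independence statements: the finite-excursion point process is independent of $T$ (this is the thinning property, using $\eta^{\sharp}(I) < +\infty$), and the trajectory up to $\mathcal{R}(V^{\sharp},0)$ really contains no contribution from any infinite excursion or from a boundary piece at time $T$. Once these are checked, the reduction to the L\'evy-It\^o decomposition is routine.
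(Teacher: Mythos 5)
Your proof is correct and takes the same route the paper intends (the paper only says ``a similar proof as for Lemma \ref{=subexp}'' without spelling it out). In fact your argument is genuinely simpler than the proof of Lemma \ref{=subexp}: because $V^{\sharp}$ is itself the Markov process whose excursions you decompose, you need no conditioning on the first excursion in $IP_h \cup N_h$ and no $h \to \infty$ passage to the limit; you can directly split the excursion point process of $V^{\sharp}$ away from $0$ into the finite ones (intensity $\eta^{\sharp}(\cdot \cap F)$) and the infinite ones (intensity $\eta^{\sharp}(\cdot \cap I)$), apply Poisson thinning, and invoke the L\'evy--It\^o decomposition. The two checks you flag are indeed the only things to verify, and they hold: the thinning independence is standard, and the Lebesgue measure of $\{t \leq \mathcal{R}(V^{\sharp},0) : V^{\sharp}(t)=0\}$ is zero since $V^{\sharp}$ is not a compound Poisson process (it has either a Brownian part or a strictly positive drift), so the local-time piece of the trajectory contributes nothing to the integral.
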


\subsection{$V^{\uparrow}$ and $V^{\sharp}$ shifted at a last passage time}

To obtain the decomposition \eqref{rae} of the law of $I(V^{\uparrow})$, we split $V^{\uparrow}$ at its last passage time at a point $y$ and obtain two independent trajectories that we can identify. 

\begin{lemme} (Corollary VII.19 of \cite{Bertoin}) \label{dernierpassage1}

For any positive $y$, the two trajectories 
\[ \left ( V^{\uparrow}(t), \ 0 \leq t \leq \mathcal{R}(V^{\uparrow}, y) \right ) \ \text{and} \ \left ( V^{\uparrow}(t + \mathcal{R}(V^{\uparrow}, y)) - y, \ t \geq 0 \right ) \]
are independent and the second is equal in law to $V^{\uparrow}$. 
\end{lemme}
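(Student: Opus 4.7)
The plan is to combine the strong Markov property with the excursion theory of $V^{\uparrow}_y$ away from $y$, and then identify the unique infinite positive excursion (translated by $-y$) with $V^{\uparrow}$ via its $h$-transform description.

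First, since $V^{\uparrow}$ has no positive jumps, $V^{\uparrow}(\tau(V^{\uparrow}, y)) = y$; by the strong Markov property at $\tau(V^{\uparrow}, y)$, the shifted process $(V^{\uparrow}(\tau(V^{\uparrow}, y) + t))_{t \geq 0}$ is independent of $(V^{\uparrow}(s))_{0 \leq s \leq \tau(V^{\uparrow}, y)}$ and has the law of $V^{\uparrow}_y$. Since $\mathcal{R}(V^{\uparrow}, y) = \tau(V^{\uparrow}, y) + \mathcal{R}(V^{\uparrow}_y, y)$, it suffices to prove that the pre-$\mathcal{R}$ trajectory of $V^{\uparrow}_y$ and its post-$\mathcal{R}$ trajectory translated by $-y$ are independent, with the latter equal in law to $V^{\uparrow}$.

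Second, apply excursion theory of $V^{\uparrow}_y$ away from $y$ with excursion measure $\eta^{\uparrow}_y$. Since $V^{\uparrow}_y$ is always positive and tends to $+\infty$ almost surely, exactly one of its excursions lies in $IP_{\infty}$, and the real time at which it starts is precisely $\mathcal{R}(V^{\uparrow}_y, y)$. By the standard splitting property of a Poisson point process (analogous to the argument used in the proof of Lemma \ref{=subexp}), this unique infinite excursion is independent of the $FP_{\infty}$-excursions preceding it, hence of the pre-$\mathcal{R}$ trajectory, and its law is $\eta^{\uparrow}_y(\cdot \cap IP_{\infty}) / \eta^{\uparrow}_y(IP_{\infty})$. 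This yields the claimed independence.

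Third, to identify the $(-y)$-translate of this infinite excursion with $V^{\uparrow}$, one notes that both can be realized as Doob $h$-transforms of $V^{\sharp}$: $V^{\uparrow}_x$ arises from the scale function $W$, while an excursion of $V^{\sharp}_y$ conditioned to drift to $+\infty$ without ever returning to $y$ arises from the shifted harmonic function $x \mapsto W(x - y)$. A semigroup-level comparison (equivalently, a Markov-property computation at $\tau(V^{\uparrow}_y, y + \epsilon)$ followed by $\epsilon \to 0^+$, invoking the Feller property of $(V^{\uparrow}_x)_{x \geq 0}$ at $x = 0$ recalled in the introduction) yields the identification. The main obstacle is precisely this final identification: reconciling the excursion-measure description of the infinite-excursion law with the $h$-transform description of $V^{\uparrow}$, which is the heart of Bertoin's Corollary VII.19 and the reason it is convenient to cite that result directly.
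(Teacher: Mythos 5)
The paper does not supply a proof of Lemma~\ref{dernierpassage1}; it is quoted verbatim as Corollary~VII.19 of \cite{Bertoin}. Your first two steps are correct and licensed by the text: the strong Markov property at $\tau(V^{\uparrow}, y)$ together with the absence of positive jumps reduces the statement to the last-passage decomposition of $V^{\uparrow}_y$ at $y$ (this reduction is stated explicitly at the end of the introduction), and the splitting property of the Poisson point process of excursions of $V^{\uparrow}_y$ away from $y$ into finite and infinite excursions gives the independence and expresses the law of the post-$\mathcal{R}$ path as the normalized restriction of $\eta^{\uparrow}_y$ to infinite excursions. You then correctly flag, but do not close, the crux: identifying this normalized excursion measure, translated by $-y$, with the law of $V^{\uparrow}$.

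Two remarks on that remaining gap. First, the $h$-transform hint as phrased is imprecise: you are working with excursions of $V^{\uparrow}_y$ (not $V^{\sharp}_y$) away from $y$, so an argument via the harmonic function $W(\cdot - y)$ would first have to relate $\eta^{\uparrow}_y$ to the excursion measure of $V^{\sharp}_y$ before invoking spatial homogeneity of $V^{\sharp}$; alternatively, one can argue directly that $V^{\uparrow}_{y + \epsilon}$ conditioned to avoid $]0, y]$ forever has the same law as $y + V^{\uparrow}_{\epsilon}$ (since staying above $y$ implies staying above $0$, this collapses the double conditioning, and spatial homogeneity of $V^{\sharp}$ does the rest), and then let $\epsilon \to 0$ using the Feller property at $0$. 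Second, this limiting pattern is exactly what the paper itself carries out in the proof of the second point of Lemma~\ref{dernierpassage2}, where the unique infinite excursion of $V^{\sharp}$ away from $0$ is identified with $V^{\uparrow}$ via a Markov-property computation at $\tau(\xi_y, a)$, the identity between the conditioned $V^{\sharp}_a$-bridge and $V^{\uparrow}_a$, and the $a \to 0$, then $y \to \infty$, limits using Proposition~VII.14 of \cite{Bertoin}. So your plan can be completed along exactly those lines; but since that argument is itself an adaptation of Bertoin's, citing Corollary~VII.19 of \cite{Bertoin} as the paper does is the most economical option.
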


\begin{lemme} \label{dernierpassage2}

\begin{itemize}
\item The two trajectories $( V^{\sharp}(t), \ 0 \leq t \leq \mathcal{R}^y(V^{\sharp}, 0) )$
\[ \text{and} \ \left ( V^{\sharp}(t + \mathcal{R}^y(V^{\sharp}, 0)), \ 0 \leq t \leq \tau(V^{\sharp}, y) - \mathcal{R}^y(V^{\sharp}, 0) \right ) \]
are independent and the second is equal in law to $( V^{\uparrow}(t), \ 0 \leq t \leq \tau(V^{\uparrow}, y) )$. As a consequence we have $\tau(V^{\uparrow}, y) \overset{\mathcal{L}}{=} \tau(V^{\sharp}, y) - \mathcal{R}^y(V^{\sharp}, 0) \leq \tau(V^{\sharp}, y)$. 
\medbreak
\item We assume that $V$ does not oscillate. The two trajectories 
\[ \left ( V^{\sharp}(t), \ 0 \leq t \leq \mathcal{R}(V^{\sharp}, 0) \right ) \ \text{and} \ \left ( V^{\sharp}(t + \mathcal{R}(V^{\sharp}, 0)), \ t \geq 0 \right ) \]
are independent and the second is equal in law to $V^{\uparrow}$. 
\end{itemize}

\end{lemme}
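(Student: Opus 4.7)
The plan is to mimic the excursion-theoretic arguments of Lemmas \ref{=subexpbis} and \ref{=subexpbisbis}, applied to excursions of $V^{\sharp}$ away from $0$. Let $(e_s)_{s \geq 0}$ denote the Poisson point process of such excursions, with intensity measure $\eta^{\sharp}$. The case where $V$ has bounded variation is handled in the same way by defining the local time artificially as in \cite{Bertoin}, Section IV.5, so that the excursion process becomes a compound Poisson process.

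For the first part, I set $T := \inf\{s > 0 : e_s \in I_y\}$. Since $\eta^{\sharp}(I_y) \in (0, +\infty)$, $T$ is a.s.\ finite and exponentially distributed with this parameter. The first passage time $\tau(V^{\sharp}, y)$ falls strictly inside the excursion $e_T$, while $\mathcal{R}^y(V^{\sharp}, 0)$ coincides with the left endpoint of $e_T$ on the time scale of $V^{\sharp}$, i.e.\ with the inverse local time at $T-$. The standard thinning property of Poisson point processes relative to the disjoint classes $F_y$ and $I_y$ then yields that the family $(e_s)_{0 \leq s < T}$, which together with the local time accumulated at $0$ builds $(V^{\sharp}(t), 0 \leq t \leq \mathcal{R}^y(V^{\sharp}, 0))$, is independent of $e_T$; and that $e_T$ itself is distributed according to $\eta^{\sharp}(\cdot \cap I_y)/\eta^{\sharp}(I_y)$. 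This gives the claimed independence.

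To identify the law of $(V^{\sharp}(t + \mathcal{R}^y(V^{\sharp}, 0)), 0 \leq t \leq \tau(V^{\sharp}, y) - \mathcal{R}^y(V^{\sharp}, 0))$ with that of $(V^{\uparrow}(t), 0 \leq t \leq \tau(V^{\uparrow}, y))$, I invoke the characterization recalled in the introduction: for $0 < x < y$, $(V^{\sharp}_x(t), 0 \leq t \leq \tau(V^{\sharp}_x, y))$ conditioned on $\{\tau(V^{\sharp}_x, y) < \tau(V^{\sharp}_x, ]-\infty, 0])\}$ is distributed as $(V^{\uparrow}_x(t), 0 \leq t \leq \tau(V^{\uparrow}_x, y))$, and $V^{\uparrow}_x$ converges in distribution to $V^{\uparrow}$ as $x \to 0^+$. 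The excursion $e_T$, conditioned on its first hitting time of a small level $x > 0$, has by the strong Markov property the law of $V^{\sharp}_x$ conditioned to reach $y$ before $]-\infty, 0]$; letting $x \to 0^+$ identifies the post-$\mathcal{R}^y(V^{\sharp}, 0)$ trajectory with $(V^{\uparrow}(t), 0 \leq t \leq \tau(V^{\uparrow}, y))$. The identity $\tau(V^{\uparrow}, y) \overset{\mathcal{L}}{=} \tau(V^{\sharp}, y) - \mathcal{R}^y(V^{\sharp}, 0)$ is then an immediate consequence, and the inequality $\leq \tau(V^{\sharp}, y)$ is trivial. For the second part, where $V^{\sharp}$ drifts to $+\infty$, the proof is exactly parallel upon replacing $I_y$ by the set $I$ of infinite excursions, whose measure $\eta^{\sharp}(I)$ is positive; the first infinite excursion arrives at $T' := \inf\{s : e_s \in I\}$, $\mathcal{R}(V^{\sharp}, 0)$ equals the inverse local time at $T'-$, and the post-split trajectory is exactly $e_{T'}$, identified with the whole of $V^{\uparrow}$ by the same limiting argument at the process level.

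The main obstacle is the identification step rather than the independence one: the Poisson thinning argument immediately yields independence, but rigorously passing from the $\eta^{\sharp}$-excursion measure restricted to $I_y$ (resp.\ $I$) to the law of $V^{\uparrow}$ (up to $\tau(V^{\uparrow}, y)$, resp.\ in its entirety) requires the delicate $x \to 0^+$ limit coupled with the conditioning defining $V^{\uparrow}_x$, and some care to ensure joint convergence of the entrance law and the subsequent trajectory.
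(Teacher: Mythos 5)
Your proposal is correct and follows essentially the same strategy as the paper's proof: decompose $V^{\sharp}$ at the beginning of the first excursion away from $0$ lying in $I_y$ (respectively the infinite excursion), obtain independence via the marking/thinning property of the Poisson point process of excursions, and identify the post-split trajectory by applying the strong Markov property of the excursion measure at a small level $x$ and then letting $x \to 0^+$ via the weak convergence $V^{\uparrow}_x \Rightarrow V^{\uparrow}$ (Proposition VII.14 in Bertoin). The only cosmetic deviation is in the second point, where the paper identifies the infinite excursion with $V^{\uparrow}$ by applying the first point's conclusion on $[0, \tau(V^{\uparrow}, y)]$ for each $y$ and letting $y \to +\infty$, whereas you propose redoing the $x \to 0^+$ limit directly for the infinite excursion; both routes are valid.
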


\begin{proof}

We fix $y > 0$ and $a \in ]0, y[$. Let us denote by $(e(s), \ s \geq 0)$ the process of excursions of $V^{\sharp}$ away from $0$.  
Recall the notations $I_y$ and $F_y$. 
$T_{y} := \inf \{ s \geq 0, \ e(s) \in I_y \}$ is the time when occurs the first excursion higher than $y$ and $\xi_{y}$ denotes this excursion. 

Decomposing $V^{\sharp}$ as its excursions away from $0$, we see that $\mathcal{R}^y(V^{\sharp}, 0)$ is the instant when begins the first excursion higher than $y$, so
\begin{eqnarray}
\left ( V^{\sharp}(t + \mathcal{R}^y(V^{\sharp}, 0)), \ 0 \leq t \leq \tau(V^{\sharp}, y) - \mathcal{R}^y(V^{\sharp}, 0) \right ) = \left ( \xi_{y}(t), \ 0 \leq t \leq \tau(\xi_{y}, y) \right ). \label{dernierpassage1.1}
\end{eqnarray}

$( V^{\sharp}(t), \ 0 \leq t \leq \mathcal{R}^y(V^{\sharp}, 0) )$ is thus a function of $(e(s) \mathds{1}_{e(s) \in F_y}, \ 0 \leq s \leq T_{y})$ while $( V^{\sharp}(t + \mathcal{R}^y(V^{\sharp}, 0)), \ 0 \leq t \leq \tau(V^{\sharp}, y) - \mathcal{R}^y(V^{\sharp}, 0) )$ is a function of $\xi_{y}$. By properties of Poisson point processes, $T_{y}$ is an exponential random variable independent of $\xi_{y}$ and the process of finite excursions $(e(s) \mathds{1}_{e(s) \in F_y}, \ s \geq 0)$ is also independent of $\xi_{y}$. Therefore the objects $(e(s) \mathds{1}_{e(s) \in F_y}, \ 0 \leq s \leq T_{y})$ and $\xi_{y}$ are independent. From this independence we deduce that
\[ \left ( V^{\sharp}(t), \ 0 \leq t \leq \mathcal{R}^y(V^{\sharp}, 0) \right ) \perp\!\!\!\perp \left ( V^{\sharp}(t + \mathcal{R}^y(V^{\sharp}, 0)), \ 0 \leq t \leq \tau(V^{\sharp}, y) - \mathcal{R}^y(V^{\sharp}, 0) \right ), \]
which is the required independence. It only remains to prove that the right hand side in \eqref{dernierpassage1.1} has the same law as $( V^{\uparrow}(t), \ 0 \leq t \leq \tau(V^{\uparrow}, y) )$. 

Using the Markov property at time $\tau(\xi_a, a)$, for an excursion $\xi_a \in I_a$, we have that $\xi_a(. + \tau(\xi_a, a))$ equals in law $V^{\sharp}_{a}$ killed when it ever reaches $0$. 

Since $I_y \subset I_a$ we can apply this to an excursion $\xi_y \in I_y$ and get that $( \xi_y(t + \tau(\xi_y, a)), \ 0 \leq t \leq \tau(\xi_y, y) - \tau(\xi_y, a))$ is equal in law to $( V^{\sharp}_{a}(t), \ 0 \leq t \leq \tau(V^{\sharp}_{a}, y))$ conditioned to reach $y$ before $0$. Since $V^{\sharp}_{a}$ has no positive jumps, reaching $y$ before $0$ is the same as reaching $y$ before $]-\infty, 0]$. 

As we mentioned in the introduction, $( V^{\sharp}_{a}(t), \ 0 \leq t \leq \tau(V^{\sharp}_{a}, y))$ conditioned to reach $y$ before $]-\infty, 0]$ is equal in law to $( V^{\uparrow}_{a}(t), \ 0 \leq t \leq \tau(V^{\uparrow}_{a}, y))$. Putting all this together we get 
\[ \left ( \xi_y(t + \tau(\xi_y, a)), \ 0 \leq t \leq \tau(\xi_y, y) - \tau(\xi_y, a) \right ) \overset{\mathcal{L}}{=} \left ( V^{\uparrow}_{a}(t), \ 0 \leq t \leq \tau(V^{\uparrow}_a, y) \right ). \]
Since $\tau(\xi_y, a)$ converges almost surely to $0$ when $a$ goes to $0$ and $V^{\uparrow}_a$ converges in distribution to $V^{\uparrow}$ according to Proposition VII.14 in \cite{Bertoin}, we can let $a$ go to $0$ in both members and get
\begin{eqnarray}
\left ( \xi_y(t), \ 0 \leq t \leq \tau(\xi_y, y) \right ) \overset{\mathcal{L}}{=} \left ( V^{\uparrow}(t), \ 0 \leq t \leq \tau(V^{\uparrow}, y) \right ). \label{dernierpassage1.2}
\end{eqnarray}
As a consequence the right hand side in \eqref{dernierpassage1.1} has the same law as $( V^{\uparrow}(t), \ 0 \leq t \leq \tau(V^{\uparrow}, y) )$, which concludes the proof of the first point of the lemma. 

We now assume that $V$ does not oscillate (so that $V^{\sharp}$ drifts to $+\infty$) and we prove the second point. For the independence, the arguments of the proof of the first point can be repeated, just replacing $y$ by $+\infty$ (we consider $\xi_{\infty}$, the infinite excursion away from $0$, instead of the first excursion higher than $y$). To prove that $( V^{\sharp}(t + \mathcal{R}(V^{\sharp}, 0)), \ t \geq 0)$ is equal in law to $V^{\uparrow}$, it suffices to prove that $\xi_{\infty}$ is equal in law to $V^{\uparrow}$. Let $y$ be finite, we know from the proof of the first point that \eqref{dernierpassage1.2} is true for any excursion in $I_y$. Since $\xi_{\infty} \in I_y$ we have 
\[ \left ( \xi_{\infty}(t), \ 0 \leq t \leq \tau(\xi_{\infty}, y) \right ) \overset{\mathcal{L}}{=} \left ( V^{\uparrow}(t), \ 0 \leq t \leq \tau(V^{\uparrow}, y) \right ). \]
Since $y$ is arbitrary and $\tau(V^{\uparrow}, y)$ converges almost surely to $+\infty$ when $y$ goes to $+\infty$, we get 
\[ \left ( \xi_{\infty}(t), \ t \geq 0 \right ) \overset{\mathcal{L}}{=} \left ( V^{\uparrow}(t), \ t \geq 0 \right ), \]
which gives the result. 

\end{proof}

%
%
%

\section{Finiteness, exponential moments, and self-decomposability} \label{decomp}

\subsection{Finiteness and exponential moments: Proof of Theorem \ref{finiteandexpomoments}} \label{existelapl}

We are grateful to an anonymous referee for the following proof that is considerably simpler than the one given by the author in the previous versions of this paper. 

\begin{proof} of Theorem \ref{finiteandexpomoments} (the finiteness of $I(V^{\uparrow})$ and \eqref{momentsexpo})

The idea of the proof is to provide finite upper bounds for the moments of $I(V^{\uparrow})$. The first step is to prove that $\mathbb{E} [I(V^{\uparrow})] < +\infty$. Using Fubini's Theorem and Corollary VII.16 of \cite{Bertoin} we have 
\begin{align*}
\mathbb{E} [I(V^{\uparrow})] = \int_0^{+\infty} \mathbb{E} [e^{-V^{\uparrow}(t)}] dt & = \int_0^{+\infty} \int_0^{+\infty} e^{-y} \mathbb{P} (V^{\uparrow}(t) \in dy ) dt \\
& = \int_0^{+\infty} \int_0^{+\infty} e^{-y} \frac{y W(y)}{t} \mathbb{P} (V(t) \in dy ) dt. 
\end{align*}
Now, using Corollary VII.3 of \cite{Bertoin} we get 
\[ \mathbb{E} [I(V^{\uparrow})] = \int_0^{+\infty} \int_0^{+\infty} e^{-y} W(y) \mathbb{P} (\tau(V, y) \in dt ) dy = \int_0^{+\infty} e^{-y} W(y) \mathbb{P} (\tau(V, y) < +\infty ) dy. \]
Since $\mathbb{P} (\tau(V, y) < +\infty ) = \mathbb{P} (\sup_{[0, +\infty[} V \geq y) = e^{-\kappa y}$ we obtain 
\[ \mathbb{E} [I(V^{\uparrow})] = \int_0^{+\infty} e^{-(1+\kappa)y} W(y) dy = \frac{1}{\Psi_V(\kappa + 1)} < +\infty. \]
The finiteness in the above expression comes from the fact that $\int_0^{+\infty} e^{-\lambda y} W(y) dy < +\infty$ for $\lambda > \kappa$. 
As a consequence, the exponential functional $I(V^{\uparrow})$ is almost surely finite and has finite expectation. 

We now turn to the proof of the finiteness of the Laplace transform. We proceed by bounding the moments of the exponential functional. For any $x \geq 0$ let us define $h(x) := \mathbb{E} [I(V^{\uparrow}_x)]$. For any $x > 0$ we have 
\[ I(V^{\uparrow}) = \int_0^{+\infty} e^{-V^{\uparrow}(t)} dt \geq \int_{\tau(V, x)}^{+\infty} e^{-V^{\uparrow}(t)} dt \overset{\mathcal{L}}{=} \int_0^{+\infty} e^{-V_x^{\uparrow}(t)} dt = I(V_x^{\uparrow}). \]
For the equality in law in the above expression we have used the Markov property for $V^{\uparrow}$ at time $\tau(V, x)$. As a consequence we have 
\begin{eqnarray}
\forall x > 0, \ h(x) = \mathbb{E} [I(V^{\uparrow}_x)] \leq \mathbb{E} [I(V^{\uparrow})] = 1/\Psi_V(\kappa + 1) < +\infty. \label{majo1stmoment}
\end{eqnarray}

{We could now conclude the proof of \eqref{momentsexpo} by making explicit Khas'minskii's condition in our particular case (see Section 1 of \cite{FITZSIMMONS1999117}), but let us give more details for the reader's convenience. We proceed similarly as in the proof of Kac's moment formula in Section 2 of \cite{FITZSIMMONS1999117} where they calculate by induction the moments of an additive functional of a Markov process in term of its potential kernel.} Note that for any $k \geq 1$, 
\begin{align*}
\mathbb{E} \left [ \left ( I(V^{\uparrow}) \right )^k \right ] & = \mathbb{E} \left [ \int_0^{+\infty} ... \int_0^{+\infty} e^{-V^{\uparrow}(t_1)} \times ... \times e^{-V^{\uparrow}(t_k)} dt_1 ... dt_k \right ] \\
& = k ! \ \mathbb{E} \left [ \int_{0 \leq t_1 < ... < t_k} e^{-V^{\uparrow}(t_1)} \times ... \times e^{-V^{\uparrow}(t_k)} dt_1 ... dt_k \right ], 
\end{align*}
so that 
\begin{eqnarray}
\forall k \geq 1, \ \mathbb{E} \left [ \left ( I(V^{\uparrow}) \right )^k \right ] / k ! \ = \mathbb{E} \left [ \int_{0 \leq t_1 < ... < t_k} e^{-V^{\uparrow}(t_1)} \times ... \times e^{-V^{\uparrow}(t_k)} dt_1 ... dt_k \right ]. \label{majokthmoment1}
\end{eqnarray}

Let us prove by induction that for any $k \geq 1$, 
\begin{eqnarray}
\mathbb{E} \left [ \left ( I(V^{\uparrow}) \right )^k \right ] \leq k ! \left ( \mathbb{E} [I(V^{\uparrow})] \right )^k < +\infty. \label{majokthmoment}
\end{eqnarray}
\eqref{majokthmoment} is clearly true for $k = 1$. Let us assume that it is true for some arbitrary rank $k$. According to \eqref{majokthmoment1}, $\mathbb{E} [ ( I(V^{\uparrow}) )^{k+1} ] / (k+1) ! $ equals 
\begin{align*}
& \mathbb{E} \left [ \int_{0 \leq t_1 < ... < t_{k+1}} e^{-V^{\uparrow}(t_1)} \times ... \times e^{-V^{\uparrow}(t_k)} \times e^{-V^{\uparrow}(t_{k+1})} dt_1 ... dt_k dt_{k+1} \right ] \\
= & \mathbb{E} \left [ \int_{0 \leq t_1 < ... < t_{k}} e^{-V^{\uparrow}(t_1)} \times ... \times e^{-V^{\uparrow}(t_k)} \left ( \int_{t_k}^{+\infty} e^{-V^{\uparrow}(s)} ds \right ) dt_1 ... dt_k \right ] \\
= & \mathbb{E} \left [ \int_{0 \leq t_1 < ... < t_{k}} e^{-V^{\uparrow}(t_1)} \times ... \times e^{-V^{\uparrow}(t_k)} \mathbb{E} \left [ \int_{t_k}^{+\infty} e^{-V^{\uparrow}(s)} ds \big | \sigma (V^{\uparrow}(u), \ 0 \leq u \leq t_k) \right ] dt_1 ... dt_k \right ]. 
\end{align*}
From the Markov property at time $t_k$, the conditional expectation in the above expression equals $h(V^{\uparrow}(t_k))$ which, according to \eqref{majo1stmoment}, is almost surely less than $\mathbb{E} [I(V^{\uparrow})]$. We thus get 
\begin{align*}
\mathbb{E} [ ( I(V^{\uparrow}) )^{k+1} ] / (k+1) ! & \leq \mathbb{E} [I(V^{\uparrow})] \times \mathbb{E} \left [ \int_{{0 \leq t_1 < ... < t_k}} e^{-V^{\uparrow}(t_1)} \times ... \times e^{-V^{\uparrow}(t_k)} dt_1 ... dt_k \right ] \\
& = \mathbb{E} [I(V^{\uparrow})] \times \mathbb{E} \left [ \left ( I(V^{\uparrow}) \right )^k \right ] / k ! \leq \left ( \mathbb{E} [I(V^{\uparrow})] \right )^{k+1}, 
\end{align*}
where we have used \eqref{majokthmoment1} and the induction hypothesis. Thus the induction is proved. As a consequence, for all $\lambda \in ]0, 1/ \mathbb{E} [I(V^{\uparrow})] [ =  ]0, \Psi_V(\kappa + 1)[$ we have 
\[ \mathbb{E} \left [ e^{\lambda I(V^{\uparrow})} \right ] = \sum_{k \geq 0} \lambda^k \mathbb{E} \left [ \left ( I(V^{\uparrow}) \right )^k \right ] / k ! \leq \sum_{k \geq 0} \left ( \lambda \mathbb{E} [I(V^{\uparrow})] \right )^k < +\infty. \]
The finiteness of the Laplace transform is obvious for $\lambda \leq 0$, so \eqref{momentsexpo} is proved. 
\end{proof}

{Note that \eqref{majokthmoment1} is a particular case of $(16)-(17)$ from \cite{FITZSIMMONS1999117}. Then, the way we took the conditional expectation in the multiple integral in order to use the Markov property is similar to the procedure applied in Section 2 of \cite{FITZSIMMONS1999117}. Finally, note that, as we said in the proof, the condition \eqref{momentsexpo} for a real $\lambda$ to be in the domain of the Laplace transform of $I(V^{\uparrow})$ coincides, in our particular case, with Khas'minskii's condition stated in Section 1 of \cite{FITZSIMMONS1999117}. Indeed, the function denoted by $G_v \mathds{1}$ in \cite{FITZSIMMONS1999117} corresponds, for our particular case, with the function denoted by $h$ in the above proof, and we see from \eqref{majo1stmoment} that $||h||_{\infty}$, the maximum value of $h$, is attained at $0$ and equals $\mathbb{E} [I(V^{\uparrow})] = 1/\Psi_V(\kappa + 1)$.} Note moreover that the argument of the above proof shows that, if $I = \int_0^{+\infty} f(A_0(t)) dt$ where $f$ is a positive function and $A$ is a positive c\`ad-l\`ag Markovian process with no positive jumps, then $I$ admits some finite exponential moments if and only if $I$ has finite expectation. 

\begin{remarque} \label{preuvealternative}

If $V$ does not oscillate, the finiteness of $I(V^{\uparrow})$ can be derived as a consequence of Lemma \ref{dernierpassage2}. Indeed, from the second statement of Lemma \ref{dernierpassage2} we have
\[ I(V^{\uparrow}) \overset{\mathcal{L}}{=} \int_0^{+\infty} e^{-V^{\sharp}(t + \mathcal{R}(V^{\sharp}, 0))} dt = \int_{\mathcal{R}(V^{\sharp}, 0)}^{+\infty} e^{-V^{\sharp}(t)} dt \leq \int_0^{+\infty} e^{-V^{\sharp}(t)} dt = I(V^{\sharp}). \]
Then, $V^{\sharp}$ drifts to $+\infty$ so Theorem 1 in \cite{Bertoinyor} ensures that $I(V^{\sharp})$ is almost surely finite which yields the result. Note that, if one would wish to study $I(V^{\uparrow})$ using the above inequality, one would {lose} all the relevant information about the right tail of the functional. Indeed, the right tail of $I(V^{\uparrow})$ is at most exponential while the right tail of $I(V^{\sharp})$ is at least inverse polynomial, as we said in the Introduction. However, the above inequality leads to a relevant {comparison} of the left tails of $I(V^{\uparrow})$ and $I(V^{\sharp})$ in Proposition \ref{vposqueue0decond}. 

\end{remarque}

\subsection{Decomposition of the law of $I(V^{\uparrow})$}

In this subsection, we prove that the law of $I(V^{\uparrow})$ is solution of the random affine equation \eqref{rae} and we give a decomposition of its non-trivial coefficient $A^y$. This is a key point of our analysis of the law of $I(V^{\uparrow})$. The self-decomposability of $I(V^{\uparrow})$, which is a direct consequence of the next proposition, has already been proved in \cite{bertoincaballero} and \cite{Pardo2009} using the same argument. However we give a proof for the sake of completeness, and because we go further, by decomposing the variable $A^y$ appearing in the relation of self-decomposability. 

\begin{prop} \label{decomposition}
For any $y>0$, the law of $I(V^{\uparrow})$ satisfies the random affine equation 
\begin{eqnarray}
I(V^{\uparrow}) \overset{\mathcal{L}}{=} \int_0^{\tau(V^{\uparrow}, y)} e^{- V^{\uparrow}(t)} dt + S_T + e^{-y} I(\tilde V^{\uparrow}). \label{kesten1.0}
\end{eqnarray}
The three terms of the right hand side are independent, $S_T$ is as in Lemma \ref{=subexp}, and $\tilde V^{\uparrow}$ is an independent copy of $V^{\uparrow}$. We define 
\[ A^y := \int_0^{\tau(V^{\uparrow}, y)} e^{- V^{\uparrow}(t)} dt + S_T \]
to lighten notations. As a consequence, $I(V^{\uparrow})$ has the same law as the sum of the random series 
\begin{eqnarray}
I(V^{\uparrow}) \overset{\mathcal{L}}{=} \sum_{k \geq 0} e^{-ky} A^y_k, \label{kesten2.0}
\end{eqnarray}
where the random variables $A^y_k$ are \textit{iid} and have the same law as $A^y$. 

\end{prop}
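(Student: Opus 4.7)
The idea is to split the defining integral of $I(V^{\uparrow})$ at the two passage times $\tau(V^{\uparrow},y)$ and $\mathcal{R}(V^{\uparrow},y)$:
\[
I(V^{\uparrow}) = \int_0^{\tau(V^{\uparrow}, y)} e^{- V^{\uparrow}(t)} dt \; + \; \int_{\tau(V^{\uparrow}, y)}^{\mathcal{R}(V^{\uparrow}, y)} e^{- V^{\uparrow}(t)} dt \; + \; \int_{\mathcal{R}(V^{\uparrow}, y)}^{+\infty} e^{- V^{\uparrow}(t)} dt,
\]
and to identify each summand separately, together with the joint independence structure.

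First I would treat the third integral. The change of variable $u = t - \mathcal{R}(V^{\uparrow},y)$ together with $V^{\uparrow}(\mathcal{R}(V^{\uparrow},y))=y$ (which holds since $V^{\uparrow}$ has no positive jumps, so it hits $y$ continuously, and since $V^{\uparrow}$ drifts to $+\infty$) rewrites this integral as
\[
e^{-y} \int_0^{+\infty} e^{-(V^{\uparrow}(u+\mathcal{R}(V^{\uparrow}, y))-y)} du.
\]
Lemma \ref{dernierpassage1} asserts that the shifted-centered process $V^{\uparrow}(\cdot + \mathcal{R}(V^{\uparrow},y)) - y$ is distributed as $V^{\uparrow}$ and independent of the trajectory $(V^{\uparrow}(t),\,0\le t\le \mathcal{R}(V^{\uparrow},y))$. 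Thus the third integral is in distribution $e^{-y} I(\tilde V^{\uparrow})$ with $\tilde V^{\uparrow}$ an independent copy of $V^{\uparrow}$, and it is independent of the two first integrals taken together.

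Next, I would deal with the middle integral, which is exactly the object of Lemma \ref{=subexp}, so it has the same law as $S_T$. To establish the independence between the first and the middle integrals, I would invoke the strong Markov property of $V^{\uparrow}$ at the stopping time $\tau(V^{\uparrow},y)$ (combined with the absence of positive jumps, which forces $V^{\uparrow}(\tau(V^{\uparrow},y))=y$). This splits the trajectory into two independent pieces; the first integral is measurable with respect to the pre-$\tau(V^{\uparrow},y)$ piece, while the middle integral is measurable with respect to the post-$\tau(V^{\uparrow},y)$ piece. Combining these with the previous paragraph yields the full joint independence claim of \eqref{kesten1.0}.

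For the series representation \eqref{kesten2.0}, I would iterate \eqref{kesten1.0}. At each step, the $I(\tilde V^{\uparrow})$ in the right-hand side can be decomposed again via \eqref{kesten1.0} using a fresh independent copy, which after $n$ iterations yields
\[
I(V^{\uparrow}) \overset{\mathcal{L}}{=} \sum_{k=0}^{n-1} e^{-ky} A^y_k + e^{-ny} I(\tilde V^{\uparrow}_n),
\]
with iid copies $A^y_k$ of $A^y$ and $\tilde V^{\uparrow}_n$ independent of them. Since $I(V^{\uparrow})$ is almost surely finite by Theorem \ref{finiteandexpomoments}, the remainder $e^{-ny} I(\tilde V^{\uparrow}_n)$ tends to $0$ in probability, and letting $n\to+\infty$ gives \eqref{kesten2.0}. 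The only delicate point in the argument is the independence between the first two summands in the decomposition — and this is the motivation for writing the middle piece via the strong Markov property at $\tau(V^{\uparrow},y)$ rather than directly via an (a priori not stopping-time) split at $\mathcal{R}(V^{\uparrow},y)$, where Lemma \ref{dernierpassage1} is needed instead.
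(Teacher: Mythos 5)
Your proposal is correct and follows essentially the same route as the paper: split at $\mathcal{R}(V^{\uparrow},y)$, apply Lemma \ref{dernierpassage1} to the post-$\mathcal{R}$ part, split the pre-$\mathcal{R}$ part at $\tau(V^{\uparrow},y)$ using the strong Markov property, and identify the middle piece via Lemma \ref{=subexp}. The only addition you make is spelling out the iteration argument for the series representation \eqref{kesten2.0}, which the paper treats as immediate (with Remark \ref{randompowerseply} handling convergence), and that is entirely harmless.
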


Let us remark that, according to Theorem VII.18 in \cite{Bertoin}, $A^y$ is equal in law to $e^{-y} \int_0^{\tau(V, y)} e^{V(t)} dt$ conditionally on $\{ \tau(V, y) < +\infty \}$. 

\begin{remarque} \label{randompowerseries}
The almost sure convergence of the random series in \eqref{kesten2.0} is a consequence of the almost sure finiteness, given by Theorem \ref{finiteandexpomoments}, of the positive random variable $I(V^{\uparrow})$. Also, it is a well known fact on random power series with $\textit{iid}$ coefficients that their radius of convergence is almost surely equal to a constant belonging to $\{ 0, 1 \}$. Since this constant, in the case of the power series in \eqref{kesten2.0}, is greater than $e^{-y}$, we deduce that it equals $1$. 
\end{remarque}

\begin{proof} of Proposition \ref{decomposition}

We fix $y>0$. As $V^{\uparrow}$ has no positive jumps and goes to infinity we have $\tau(V^{\uparrow}, y) \leq \mathcal{R}(V^{\uparrow}, y) < +\infty$ and $V^{\uparrow} ( \tau(V^{\uparrow}, y) ) = V^{\uparrow} ( \mathcal{R}(V^{\uparrow}, y) ) = y$. We write 
\begin{align*} I(V^{\uparrow}) & = \int_0^{\mathcal{R}(V^{\uparrow}, y)} e^{- V^{\uparrow}(t)} dt + \int_{\mathcal{R}(V^{\uparrow}, y)}^{+ \infty} e^{- V^{\uparrow}(t)} dt \\
& = \int_0^{\mathcal{R}(V^{\uparrow}, y)} e^{- V^{\uparrow}(t)} dt + e^{-y} \int_0^{+ \infty} e^{- \left ( V^{\uparrow}(t + \mathcal{R}(V^{\uparrow}, y)) - y \right )} dt \\
& \overset{\mathcal{L}}{=} \int_0^{\mathcal{R}(V^{\uparrow}, y)} e^{- V^{\uparrow}(t)} dt + e^{-y} I(\tilde V^{\uparrow}). 
\end{align*}
We have used Lemma \ref{dernierpassage1} for the last equality in which $\tilde V^{\uparrow}$ is an independent copy of $V^{\uparrow}$. 
We now decompose 
\begin{eqnarray}
\int_0^{\mathcal{R}(V^{\uparrow}, y)} e^{- V^{\uparrow}(t)} dt = \int_0^{\tau(V^{\uparrow}, y)} e^{- V^{\uparrow}(t)} dt + \int_{\tau(V^{\uparrow}, y)}^{\mathcal{R}(V^{\uparrow}, y)} e^{- V^{\uparrow}(t)} dt. \label{coupetau1}
\end{eqnarray}

Since $V^{\uparrow}(\tau(V^{\uparrow}, y))=y$, combining with the Markov property at time $\tau(V^{\uparrow}, y)$, the two terms in the right hand side of $(\ref{coupetau1})$ are independent: 
\[ \int_0^{\tau(V^{\uparrow}, y)} e^{- V^{\uparrow}(t)} dt \perp\!\!\!\perp \int_{\tau(V^{\uparrow}, y)}^{\mathcal{R}(V^{\uparrow}, y)} e^{- V^{\uparrow}(t)} dt. \]
Now, thanks to Lemma \ref{=subexp}, the second term has the same law as $S_T$ with $S_T$ as in the lemma. This achieves the proof. 

\end{proof}

As a direct consequence of \eqref{kesten2.0} we can now prove the remaining part of Theorem \ref{finiteandexpomoments}. 

\begin{proof} of Theorem \ref{finiteandexpomoments} \eqref{queueexpo}

Let $S$ and $T$ be as in Proposition \ref{decomposition} and $\epsilon \in ]0, \mathbb{E}[S_1][$ (we can see that $\mathbb{E}[S_1] < +\infty$ but this is not important here). Then we have 
\[ \mathbb{P} \left ( S_T \geq x \right ) \geq \mathbb{P} \left ( S_{x/\epsilon} \geq x \right ) \times \mathbb{P} \left ( T \geq x/\epsilon \right ). \]
The first factor in the right hand side converges to $1$ thanks to the law of large numbers for L\'evy processes (see for example Theorem 36.5 in Sato \cite{Sato}). The second is equal to $e^{-px/\epsilon}$, where $p$ is the parameter of the exponential random variable $T$. Therefore, we have an exponential lower bound for the right tail of $S_T$:  $\mathbb{P} ( S_T \geq t ) \geq e^{-C x}$ for some positive constant $C$, when $x$ is large enough. We then notice from \eqref{kesten2.0} that $I(V^{\uparrow})$ is stochastically greater than $S_T$ so we get an exponential lower bound for the right tail of $I(V^{\uparrow})$, which proves the lower bound in \eqref{queueexpo}. The upper bound in \eqref{queueexpo} follows from \eqref{momentsexpo} and Markov's inequality which finishes the proof of Theorem \ref{finiteandexpomoments}. 
\end{proof}

\begin{remarque}
It is possible to prove the first part of Theorem \ref{finiteandexpomoments} (the fact that $I(V^{\uparrow})$ is finite and admits some finite exponential moments) by invoking \eqref{kesten2.0} and proving that each of the two terms composing $A^y$ admit some finite exponential moments. 
\end{remarque}

\section{Asymptotic tail at $0$} \label{tail}

First, let us prove that Theorem \ref{queuefonctexpo} easily implies Theorem \ref{vposcompbrownnullequeue0} and prove Remark \ref{morepreceisevb}. 

\begin{proof} of Theorem \ref{vposcompbrownnullequeue0}

Assume that Theorem \ref{queuefonctexpo} is proved. 

We first prove \eqref{majogeneralenew}. {Recall that $1 \leq \sigma \leq  \beta \leq 2$.} Let us fix $\beta' > \beta$ and $\epsilon > 0$. From the definition of $\beta$ we have that $\Psi_V(\lambda) \leq \epsilon \lambda^{\beta'}$ for all $\lambda$ large enough. {Since $\beta' > 1$ we can use} the first point of Theorem \ref{queuefonctexpo} with $C = \epsilon, \alpha = \beta'$ and $\delta = 1/2$, we deduce that 
\[ \underset{x \rightarrow 0}{\limsup} \ x^{1/(\beta' - 1)} \log \left ( \mathbb{P} \left ( I(V^{\uparrow}) \leq x \right ) \right ) \leq - (\beta' - 1) / 2 \epsilon^{1/(\beta' - 1)}. \]
Since $\epsilon$ can be chosen as small as we want we obtain \eqref{majogeneralenew}. 

We now assume that $\sigma > 1$ and prove \eqref{minogeneralenew}. Let us fix $\sigma' \in ]1, \sigma[$ and $M > 0$. From the definition of $\sigma$ we have that $\Psi_V(\lambda) \geq M \lambda^{\sigma'}$ for all $\lambda$ large enough. Using the second point of Theorem \ref{queuefonctexpo} with $c = M, \alpha = \sigma'$ and $\delta = 2$ we deduce that 
\[ 0 \geq \underset{x \rightarrow 0}{\liminf} \ x^{1/(\sigma' - 1)} \log \left ( \mathbb{P} \left ( I(V^{\uparrow}) \leq x \right ) \right ) \geq - 2 \sigma'^{\sigma' / (\sigma' - 1)} / M^{1/(\sigma' - 1)}. \]
Since $M$ can be chosen as large as we want we obtain \eqref{minogeneralenew}. 

\end{proof}

\begin{proof} of Remark \ref{morepreceisevb}

Let us assume that $V$ has bounded variation. As it can be seen from Remark \ref{casvb1}, it is the difference of a positive drift $\gamma^* t$ and a pure jump subordinator $S_t$: $\forall t > 0, \ V(t) = \gamma^* t - S_t \leq \gamma^* t$. Let us fix $y>0$, we have almost surely 
\begin{eqnarray}
\int_0^{\tau(V, y)} e^{-V(t)} dt \geq \int_0^{\tau(V, y)} e^{-\gamma^* t} dt = \frac1{\gamma^*} \left (1- e^{-\tau(V, y)} \right ). \label{queuecasvb}
\end{eqnarray}
Since $V$ has bounded variation, we have $\mathbb{P}(V(t) > 0, \ 0 \leq t \leq \tau(V, y)) > 0$ (see for example $(47.1)$ in \cite{Sato}) and we can see that $(V^{\uparrow}(t), \ 0 \leq t \leq \tau(V^{\uparrow}, y))$ is equal in law to $(V, \ 0 \leq t \leq \tau(V, y))$ conditioned in the usual sense to remain positive. 
Combining with \eqref{queuecasvb}, we see that almost surely 
\begin{eqnarray}
I(V^{\uparrow}) \geq \int_0^{\tau(V^{\uparrow}, y)} e^{-V^{\uparrow}(t)} dt \geq \frac1{\gamma^*} \left (1- e^{-\tau(V^{\uparrow}, y)} \right ). \label{queuecasvb1}
\end{eqnarray}
Then, since $\tau(V^{\uparrow}, y)$ converges almost surely to $+\infty$ when $y$ goes to $+\infty$, we deduce that $I(V^{\uparrow})$ is more than the positive constant $1/\gamma^*$ almost surely. 

Reciprocally, the support of $S_t$ contains $0$ because it is pure jump. Therefore, for any $\epsilon > 0$ and $y > 0$, with a positive probability, the left hand side in \eqref{queuecasvb} can be $\epsilon$-close to the right hand side while $V$ stays positive on $[0, \tau(V^{\uparrow}, y)]$. Since $(V^{\uparrow}(t), \ 0 \leq t \leq \tau(V^{\uparrow}, y))$ is equal in law to $(V, \ 0 \leq t \leq \tau(V, y))$ conditioned in the usual sense to remain positive, we deduce that the second term in \eqref{queuecasvb1} can be $\epsilon$-close to the third term with a positive probability. Then, note that almost surely $\tau(V^{\uparrow}, y) \geq y / \gamma^*$ (because it holds for $\tau(V, y)$) and that, from the Markov property, the event $\{ \int_{\tau(V^{\uparrow}, y)}^{+\infty} e^{-V^{\uparrow}(t)} dt < \epsilon \}$ is independent from the event considered in \eqref{queuecasvb1} and has a positive probability provided $y$ is large enough. As a consequence $I(V^{\uparrow})$ can be $3 \epsilon$-close to $1 / \gamma^*$ with a positive probability so the support of the distribution of $I(V^{\uparrow})$ contains $1/\gamma^*$. Then, from the properties of non-degenerate self-decomposable distributions, we deduce that the support of $I(V^{\uparrow})$ is $[1/\gamma^*, +\infty[$. 

\end{proof}

In the next Subsection we exploit Proposition \ref{decomposition} to prepare the proofs of Theorems \ref{queuefonctexpo} and \ref{varreg}. 

\subsection{Laplace transform of $I(V^{\uparrow})$} \label{estimelaplace}

In order to prove asymptotic estimates on $\mathbb{P} ( I(V^{\uparrow}) \leq x )$, we first study the Laplace transform of $I(V^{\uparrow})$ via the series decomposition \eqref{kesten2.0}. It is thus natural that we need first to study the Laplace transform of $A^y$. 

First, let us define a notation. $V^{\sharp}$ is a spectrally negative L\'evy process, so, according to Theorem VII.1 in \cite{Bertoin}, the process $\tau ( V^{\sharp}, . )$ is a subordinator which Laplace exponent $\Phi_{V^{\sharp}}$ is defined for $\lambda \geq 0$ by 
\[ \Phi_{V^{\sharp}}(\lambda) := - \log \left ( \mathbb{E} \left [ e^{-\lambda \tau \left ( V^{\sharp}, 1 \right )} \right ] \right ). \]
Moreover, we have $\Phi_{V^{\sharp}} = \Psi_{V^{\sharp}}^{-1}$. 

\begin{prop} \label{loglaplaceA}

We fix $y > 0$. Let $A^y$ be as in Proposition \ref{decomposition}, then, for all $\epsilon > 0$ and $\lambda$ large enough we have 
\begin{eqnarray}
(1 - \epsilon) y \Phi_{V^{\sharp}}(e^{-y} \lambda) \leq - \log \left ( \mathbb{E} \left [ e^{-\lambda A^y} \right ] \right ) \leq (1 + \epsilon) y \Phi_{V^{\sharp}}(\lambda). \label{asyptlaplay}
\end{eqnarray}

\end{prop}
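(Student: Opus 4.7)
The plan is to exploit the identity $A^y \overset{\mathcal{L}}{=} e^{-y} \int_0^{\tau(V^{\sharp}, y)} e^{V^{\sharp}(t)} dt$, which is the analogue for $V^{\sharp}$ of the identity mentioned in the remark after Proposition \ref{decomposition} (no conditioning is needed for $V^{\sharp}$ since $\tau(V^{\sharp}, y) < +\infty$ a.s.). I would then sandwich $A^y$ stochastically between $\tau(V^{\sharp}, y)$ for the upper bound and $e^{-y}\tau(V^{\uparrow}, y)$ for the lower bound, and use excursion theory to control the Laplace transform of $\tau(V^{\uparrow}, y)$ in terms of $\Phi_{V^{\sharp}}$.

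For the upper bound on $-\log \mathbb{E}[e^{-\lambda A^y}]$, since $V^{\sharp}$ has no positive jumps and $V^{\sharp}(\tau(V^{\sharp}, y)) = y$, one has $V^{\sharp}(t) \leq y$ on $[0, \tau(V^{\sharp}, y)]$, hence $A^y \leq \tau(V^{\sharp}, y)$. Therefore $\mathbb{E}[e^{-\lambda A^y}] \geq e^{-y\Phi_{V^{\sharp}}(\lambda)}$, which gives $-\log \mathbb{E}[e^{-\lambda A^y}] \leq y\Phi_{V^{\sharp}}(\lambda) \leq (1+\epsilon)y\Phi_{V^{\sharp}}(\lambda)$ for every $\epsilon > 0$ and every $\lambda > 0$.

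For the lower bound I would observe that on $[\mathcal{R}^y(V^{\sharp}, 0), \tau(V^{\sharp}, y)]$ the process $V^{\sharp}$ stays nonnegative: it equals $0$ at $\mathcal{R}^y(V^{\sharp}, 0)$ and $y$ at $\tau(V^{\sharp}, y)$, and cannot take a negative value in between since any transition from a negative value back to $y$ would require a continuous upward crossing of $0$ (no positive jumps), contradicting the definition of $\mathcal{R}^y(V^{\sharp}, 0)$ as the last zero. Hence $e^{V^{\sharp}(t)} \geq 1$ on this interval, so $A^y \geq e^{-y}(\tau(V^{\sharp}, y) - \mathcal{R}^y(V^{\sharp}, 0)) \overset{\mathcal{L}}{=} e^{-y}\tau(V^{\uparrow}, y)$ by Lemma \ref{dernierpassage2}, and therefore $\mathbb{E}[e^{-\lambda A^y}] \leq \mathbb{E}[e^{-e^{-y}\lambda \tau(V^{\uparrow}, y)}]$. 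The independence statement of Lemma \ref{dernierpassage2} yields the factorization $e^{-y\Phi_{V^{\sharp}}(\mu)} = \mathbb{E}[e^{-\mu \mathcal{R}^y(V^{\sharp}, 0)}] \cdot \mathbb{E}[e^{-\mu \tau(V^{\uparrow}, y)}]$, so the desired inequality reduces to showing that $-\log \mathbb{E}[e^{-\mu \mathcal{R}^y(V^{\sharp}, 0)}] = o(\Phi_{V^{\sharp}}(\mu))$ as $\mu \to +\infty$.

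This last estimate is the main obstacle of the proof. I would handle it through excursion theory of $V^{\sharp}$ away from $0$. With the notation $I_y$ and $F_y$ introduced in Section \ref{prelim}, a thinning argument on the Poisson point process of excursions shows that $\mathcal{R}^y(V^{\sharp}, 0)$ is equal in law to a subordinator with Lévy measure $\zeta \eta^{\sharp}(\cdot \cap F_y)$ (the image of $\eta^{\sharp}|_{F_y}$ by the lifetime map $\zeta$) evaluated at an independent exponential time of parameter $\eta^{\sharp}(I_y)$. This yields the explicit formula
\[ \mathbb{E}[e^{-\mu \mathcal{R}^y(V^{\sharp}, 0)}] = \frac{\eta^{\sharp}(I_y)}{\eta^{\sharp}(I_y) + \Phi_{\sigma}(\mu)}, \]
where $\Phi_{\sigma}$ is the Laplace exponent of that subordinator and satisfies $\Phi_{\sigma}(\mu) = O(\mu)$ by the elementary bound $1 - e^{-\mu t} \leq \mu t$. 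It follows that $-\log \mathbb{E}[e^{-\mu \mathcal{R}^y(V^{\sharp}, 0)}] = O(\log \mu)$, which is indeed $o(\Phi_{V^{\sharp}}(\mu))$ since $\Psi_{V^{\sharp}}(\lambda) = O(\lambda^2)$ as $\lambda \to +\infty$ forces $\Phi_{V^{\sharp}}(\mu) \geq c\sqrt{\mu}$ for some positive constant $c$.
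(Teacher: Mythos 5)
Your argument is correct, and while it rests on the same underlying ingredients as the paper's proof --- excursion theory, the last-zero splitting of Lemma~\ref{dernierpassage2}, and the bound $\Phi_{V^{\sharp}}(\mu)\geq c\sqrt{\mu}$ --- it organizes them around a different starting point, with a cleaner outcome in places. The paper works with the decomposition of $A^y$ from Proposition~\ref{decomposition} as the independent sum $\int_0^{\tau(V^{\uparrow},y)} e^{-V^{\uparrow}(t)}\,dt + S_T$: its upper bound uses $\int_0^{\tau(V^{\uparrow},y)} e^{-V^{\uparrow}}\leq\tau(V^{\uparrow},y)\overset{sto}{\leq}\tau(V^{\sharp},y)$ and still carries the positive, unbounded correction $\log(1+\Phi_S(\lambda)/p)$ from the $S_T$ factor; its lower bound writes the Laplace transform of $\int_0^{\tau(V^{\uparrow},y)} e^{-V^{\uparrow}}$ as the ratio of that of $\int_0^{\tau(V^{\sharp},y)} e^{-V^{\sharp}}$ to that of $\tilde S_{\tilde T}$ (Lemmas~\ref{dernierpassage2} and~\ref{=subexpbis}), then bounds $\int_0^{\tau(V^{\sharp},y)} e^{-V^{\sharp}}\geq e^{-y}\tau(V^{\sharp},y)$. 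You instead open with the time-reversal identity $A^y\overset{\mathcal{L}}{=}e^{-y}\int_0^{\tau(V^{\sharp},y)}e^{V^{\sharp}(t)}\,dt$ (recorded in the paper only as a remark after Proposition~\ref{decomposition}, via Theorem VII.18 of Bertoin) and extract both bounds from pointwise inequalities along the single path $(V^{\sharp}(t))_{t\in[0,\tau(V^{\sharp},y)]}$. Your upper bound is in fact sharper than the paper's: $-\log\mathbb{E}[e^{-\lambda A^y}]\leq y\Phi_{V^{\sharp}}(\lambda)$ for every $\lambda>0$, with no error term. For the lower bound, the error you must absorb is the log-Laplace of $\mathcal{R}^y(V^{\sharp},0)$, governed by the lifetime subordinator with L\'evy measure $\zeta\,\eta^{\sharp}(\cdot\cap F_y)$ (the analogue of Lemma~\ref{=subexpbis} with the lifetime map $\zeta$ in place of $G$), rather than the log-Laplace of $\tilde S_{\tilde T}$; both are $O(\log\lambda)$, hence $o(\Phi_{V^{\sharp}})$, so the two routes reach the same estimate. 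Two small remarks: the phrase ``no conditioning is needed for $V^{\sharp}$'' should be justified by the introductory fact that $(V^{\sharp}(t),\ 0\leq t\leq\tau(V^{\sharp},y))$ has exactly the conditional law of $(V(t),\ 0\leq t\leq\tau(V,y))$ given $\{\tau(V,y)<+\infty\}$, which is what lets the conditional statement for $V$ become an unconditional one for $V^{\sharp}$ also when $V$ drifts to $-\infty$; and to show $\Phi_{\sigma}(\mu)=O(\mu)$ for an arbitrary subordinator one should use $1-e^{-\mu t}\leq 1\wedge\mu t$ rather than $1-e^{-\mu t}\leq\mu t$ alone, since the latter requires integrability of $t$ against the L\'evy measure.
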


\begin{proof} 

According to the definition of $A^y$ in Proposition \ref{decomposition}, $A^y$ can be decomposed as the sum of two independent random variables, one having the same law as $\int_0^{\tau(V^{\uparrow}, y)} e^{- V^{\uparrow}(t)} dt$ and another having the same law as $S_T$, defined as in Lemma \ref{=subexp}. Let $\Phi_S$ be the Laplace exponent of the subordinator $S$: 
\begin{eqnarray}
\forall \lambda \geq 0, \ \Phi_S(\lambda) := - \log \left ( \mathbb{E} \left [ e^{-\lambda S_1} \right ] \right ). \label{deflaplexposub}
\end{eqnarray}We can see that the Laplace transform of the random variable $S_T$ is given by 
\[ \forall \lambda \geq 0, \ \mathbb{E} \left [ e^{-\lambda S_T} \right ] = \frac{\eta_y^{\sharp}(IP_{\infty}) + \eta_y^{\sharp}(N_{\infty})}{\eta_y^{\sharp}(IP_{\infty}) + \eta_y^{\sharp}(N_{\infty}) + \Phi_S(\lambda)}. \]

We thus have
\begin{align}
- \log \left ( \mathbb{E} \left [ e^{-\lambda A^y} \right ] \right ) & = - \log \left ( \mathbb{E} \left [ \exp \left (-\lambda \int_0^{\tau(V^{\uparrow}, y)} e^{- V^{\uparrow}(t)} dt \right ) \right ] \right ) - \log \left ( \mathbb{E} \left [ e^{-\lambda S_T} \right ] \right ) \nonumber \\
& = - \log \left ( \mathbb{E} \left [ \exp \left (-\lambda \int_0^{\tau(V^{\uparrow}, y)} e^{- V^{\uparrow}(t)} dt \right ) \right ] \right ) - \log \left ( \frac{p}{p + \Phi_S(\lambda)} \right ), \label{decomplaplay}
\end{align}
where we denoted $p$ for the constant $\eta_y^{\sharp}(IP_{\infty}) + \eta_y^{\sharp}(N_{\infty})$. Using the fact that $V^{\uparrow}$ is non-negative and the first point of Lemma \ref{dernierpassage2} we have 
\begin{eqnarray}
\int_0^{\tau(V^{\uparrow}, y)} e^{- V^{\uparrow}(t)} dt \leq \tau(V^{\uparrow}, y) \overset{sto}{\leq} \tau(V^{\sharp}, y), \label{nouvellepreuve1}
\end{eqnarray}
where $\overset{sto}{\leq}$ denotes a stochastic inequality. As a consequence of \eqref{nouvellepreuve1} and of the definition of $\Phi_{V^{\sharp}}$ we have 
\[ - \log \left ( \mathbb{E} \left [ \exp \left (-\lambda \int_0^{\tau(V^{\uparrow}, y)} e^{- V^{\uparrow}(t)} dt \right ) \right ] \right ) \leq - \log \left ( \mathbb{E} \left [ e^{-\lambda \tau(V^{\sharp}, y)} \right ] \right ) = y \Phi_{V^{\sharp}}(\lambda). \]
Combining this inequality with \eqref{decomplaplay} we obtain 
\begin{eqnarray}
- \log \left ( \mathbb{E} \left [ e^{-\lambda A^y} \right ] \right ) \leq y \Phi_{V^{\sharp}}(\lambda) - \log \left ( \frac{p}{p + \Phi_S(\lambda)} \right ). \label{majolaplay}
\end{eqnarray}
Using the first point of Lemma \ref{dernierpassage2} and Lemma \ref{=subexpbis} we have
\[ - \log \left ( \mathbb{E} \left [ \exp \left (-\lambda \int_0^{\tau(V^{\uparrow}, y)} e^{- V^{\uparrow}(t)} dt \right ) \right ] \right ) = - \log \left ( \mathbb{E} \left [ e^{-\lambda \int_0^{\tau(V^{\sharp}, y)} e^{- V^{\sharp}(t)} dt} \right ] / \mathbb{E} \left [ e^{-\lambda \tilde S_{\tilde T}} \right ] \right ), \]
where $\tilde S_{\tilde T}$ is as $S_T$ from Lemma \ref{=subexpbis}. Here again, if $\Phi_{\tilde S}$ denotes the Laplace exponent of the subordinator $\tilde S$ as in \eqref{deflaplexposub} we have 
\[ \forall \lambda \geq 0, \ \mathbb{E} \left [ e^{-\lambda \tilde S_{\tilde T}} \right ] = \frac{\eta^{\sharp}(I_y)}{\eta^{\sharp}(I_y) + \Phi_{\tilde S}(\lambda)}. \]
Moreover we have 
\[ \int_0^{\tau(V^{\sharp}, y)} e^{- V^{\sharp}(t)} dt \geq e^{-y} \tau(V^{\sharp}, y). \]
Putting together the above three expressions and the definition of $\Phi_{V^{\sharp}}$ we obtain
\[ - \log \left ( \mathbb{E} \left [ \exp \left (-\lambda \int_0^{\tau(V^{\uparrow}, y)} e^{- V^{\uparrow}(t)} dt \right ) \right ] \right ) \geq y \Phi_{V^{\sharp}}(e^{-y} \lambda) + \log \left ( \frac{\eta^{\sharp}(I_y)}{\eta^{\sharp}(I_y) + \Phi_{\tilde S}(\lambda)} \right ). \]
Combining the above inequality with \eqref{decomplaplay} and the fact that the term $- \log ( p/(p + \Phi_{S}(\lambda)) )$ is non-negative we get 
\begin{eqnarray}
- \log \left ( \mathbb{E} \left [ e^{-\lambda A^y} \right ] \right ) \geq y \Phi_{V^{\sharp}}(e^{-y} \lambda) + \log \left ( \frac{\eta^{\sharp}(I_y)}{\eta^{\sharp}(I_y) + \Phi_{\tilde S}(\lambda)} \right ). \label{minolaplay}
\end{eqnarray}
According to the L\'evy-Khintchine formula applied to the pure jump subordinator $S$, the Laplace exponent $\Phi_S$ can be written 
\[ \Phi_S(\lambda) = \int_0^{+\infty} (1 - e^{-\lambda x}) \nu_S(dx). \]
By dominated convergence $\Phi_S(\lambda) / \lambda$ converges to $0$ when $\lambda$ goes to $+\infty$. As a consequence we have $\Phi_S(\lambda) \leq \lambda$ for large $\lambda$. Similarly we have $\Phi_{\tilde S}(\lambda) \leq \lambda$ for large $\lambda$. On the other hand, since $\Psi_{V^{\sharp}} (\lambda) / \lambda^2$ is bounded when $\lambda$ goes to infinity, there is a positive constant $c$ such that $\Phi_{V^{\sharp}}(\lambda) \geq c \lambda^{1/2}$ for large $\lambda$. 
Combining all this with \eqref{majolaplay} and \eqref{minolaplay} we get \eqref{asyptlaplay} for any fixed $\epsilon > 0$ and $\lambda$ large enough. 

\end{proof}

\begin{prop} \label{loglaplacefonctexpo}

Assume that there is $\alpha \geq 1$ and a positive constant $C$ such that for all $\lambda$ large enough we have $\Psi_V(\lambda) \leq C \lambda^{\alpha}$, then we have 
\begin{eqnarray}
\liminf_{\lambda \rightarrow +\infty} - \log \left ( \mathbb{E} \left [ e^{-\lambda I(V^{\uparrow})} \right ] \right ) / \lambda^{1/\alpha} \geq \alpha / C^{1/\alpha}. \label{loglaplacefonctexpo1}
\end{eqnarray}

Assume that there is $\alpha \geq 1$ and a positive constant $c$ such that for all $\lambda$ large enough we have $\Psi_V(\lambda) \geq c \lambda^{\alpha}$, then we have 
\begin{eqnarray}
\limsup_{\lambda \rightarrow +\infty} - \log \left ( \mathbb{E} \left [ e^{-\lambda I(V^{\uparrow})} \right ] \right ) / \lambda^{1/\alpha} \leq \alpha / c^{1/\alpha}. \label{loglaplacefonctexpo2}
\end{eqnarray}

\end{prop}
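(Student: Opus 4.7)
The plan is to exploit the series decomposition \eqref{kesten2.0} from Proposition \ref{decomposition}. For any fixed $y > 0$, this gives
\[ -\log \mathbb{E}\left[e^{-\lambda I(V^{\uparrow})}\right] = \sum_{k \geq 0} \phi(\lambda e^{-ky}), \qquad \phi(\lambda) := -\log \mathbb{E}\left[e^{-\lambda A^y}\right]. \]
I would then combine this with the two-sided bound $(1-\epsilon) y \Phi_{V^{\sharp}}(e^{-y}\lambda) \leq \phi(\lambda) \leq (1+\epsilon) y \Phi_{V^{\sharp}}(\lambda)$ from Proposition \ref{loglaplaceA}, and recover the sharp prefactor $\alpha$ by letting $y \to 0^+$, using the elementary identity $\lim_{y \to 0^+} y/(e^{y/\alpha}-1) = \alpha$.

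For \eqref{loglaplacefonctexpo1}, from $\Psi_V(\lambda) \leq C\lambda^{\alpha}$ and $\Psi_{V^{\sharp}}(\cdot) = \Psi_V(\kappa + \cdot)$, I would deduce $\Psi_{V^{\sharp}}(\lambda) \leq (1+\epsilon) C \lambda^{\alpha}$ for $\lambda$ large, whence $\Phi_{V^{\sharp}}(\mu) \geq (\mu/((1+\epsilon) C))^{1/\alpha}$ for $\mu$ large. Inserting this into the lower bound of Proposition \ref{loglaplaceA}, restricting the sum to those indices $k$ for which $\lambda e^{-(k+1)y}$ exceeds the required threshold (a number that tends to $+\infty$ with $\lambda$), and summing the resulting geometric series of ratio $e^{-y/\alpha}$ yields
\[ -\log \mathbb{E}\left[e^{-\lambda I(V^{\uparrow})}\right] \geq (1-\epsilon)^2 \, \frac{y \, e^{-y/\alpha}}{1-e^{-y/\alpha}} \left( \frac{\lambda}{(1+\epsilon) C} \right)^{1/\alpha} - O(1). \]
Sending first $\lambda \to \infty$, then $y \to 0^+$, then $\epsilon \to 0^+$ gives \eqref{loglaplacefonctexpo1}.

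For \eqref{loglaplacefonctexpo2}, from $\Psi_V(\lambda) \geq c\lambda^{\alpha}$ and the monotonicity of $\Psi_V$, one gets $\Psi_{V^{\sharp}}(\lambda) \geq c \lambda^{\alpha}$ for $\lambda$ large, hence $\Phi_{V^{\sharp}}(\mu) \leq (\mu/c)^{1/\alpha}$ for $\mu$ large. I would split the series into a head of indices $k$ for which $\lambda e^{-ky}$ exceeds the threshold given by Proposition \ref{loglaplaceA} (bounded by $(1+\epsilon) y (\lambda/c)^{1/\alpha} \sum_{k \geq 0} e^{-ky/\alpha}$), and a tail bounded by $\mathbb{E}[A^y] \sum_{k > K(\lambda)} \lambda e^{-ky}$, using the elementary bound $\phi(\mu) \leq \mu \mathbb{E}[A^y]$ that follows from Jensen's inequality. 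The tail is $O(1)$ and the head gives, after the same order of limits $\lambda \to \infty$, $y \to 0^+$, $\epsilon \to 0^+$, the desired estimate.

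The main obstacle is that the constant $\alpha$ appears only in the limit $y \to 0^+$, whereas the bounds of Proposition \ref{loglaplaceA} hold only for $\lambda$ beyond a threshold that in principle depends on $y$; the argument must therefore fix $y$ and $\epsilon$ first, let $\lambda \to \infty$, and only afterwards optimize over $y$ and $\epsilon$. A secondary care is needed for the tail of the series, where the asymptotic bound on $\phi$ fails: Jensen's inequality together with the finiteness of $\mathbb{E}[A^y]$ (guaranteed by Theorem \ref{finiteandexpomoments}) suffices to make those contributions $O(1)$ and hence negligible against $\lambda^{1/\alpha}$.
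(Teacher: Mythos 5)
Your proposal is correct and follows essentially the same route as the paper: the same series decomposition \eqref{kesten2.0}, the same two-sided control of $-\log \mathbb{E}[e^{-\lambda A^y}]$ from Proposition \ref{loglaplaceA}, the same truncation of the sum at the threshold index, and the same order of limits (first $\lambda \to \infty$, then $y \to 0^+$, then $\epsilon \to 0^+$). The only cosmetic difference is that you bound the whole tail of the series with the single Jensen estimate $\phi(\mu) \leq \mu \,\mathbb{E}[A^y]$, whereas the paper splits it into an intermediate block $T_2$ (bounded via monotonicity of the Laplace transform) and a far tail $T_3$ (bounded via the small-$\lambda$ estimate \eqref{loglaplacefonctexpo4}); your version is marginally cleaner but the structure is identical.
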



\begin{proof}


Let us fix $y > 0$ for which we apply the decomposition \eqref{kesten2.0}. Let us denote 
\[ \mathcal{M} (\lambda) := - \log \left ( \mathbb{E} \left [ e^{-\lambda I(V^{\uparrow})} \right ] \right ) = \sum_{k \geq 0} - \log \left ( \mathbb{E} \left [ e^{-\lambda e^{-ky} A^y} \right ] \right ), \]
where the second equality comes from \eqref{kesten2.0} and from the fact that the sequence $(A^y_k)_{k \geq 0}$ is \textit{iid}. 

The asymptotic behavior of $\mathcal{M} (\lambda)$ is thus related to the asymptotic behavior of $- \log ( \mathbb{E} [ e^{-\lambda A^y} ] )$. Unfortunately, Proposition \ref{loglaplaceA} can not be applied simultaneously to all the terms of the sum defining $\mathcal{M} (\lambda)$. We separate this sum into three parts: a sum over a finite number of small indices for which we can apply \eqref{asyptlaplay} to each term, a sum over an infinite number of large indices that can be neglected, and a sum over the remaining indices (in finite number) that can be neglected too. 

We now prove the second point of the proposition. We assume that there is $\alpha \geq 1$ and a positive constant $c$ such that for all $\lambda$ large enough we have $\Psi_V(\lambda) \geq c \lambda^{\alpha}$, and we prove \eqref{loglaplacefonctexpo2}. Let us fix $\delta > 1$. Since $\Psi_{V^{\sharp}}(.) = \Psi_V(\kappa + .)$ and $\Phi_{V^{\sharp}} = \Psi_{V^{\sharp}}^{-1}$, we have for all $\lambda$ large enough that $\Phi_{V^{\sharp}}(\lambda) \leq \delta \lambda^{1/\alpha} / c^{1/\alpha}$. 

According to Proposition \ref{loglaplaceA} there exists $\lambda_{\delta} > 1$ such that \eqref{asyptlaplay} is satisfied with $\epsilon = \delta - 1$ for all $\lambda \geq \lambda_{\delta}$. By increasing $\lambda_{\delta}$ if necessary, we can also assume that $\Phi_{V^{\sharp}}(\lambda) \leq \delta \lambda^{1/\alpha} / c^{1/\alpha}$ for all $\lambda \geq \lambda_{\delta}$. 
Putting all this together we get 
\begin{eqnarray}
\lambda \geq \lambda_{\delta} \Rightarrow - \log \left ( \mathbb{E} \left [ e^{-\lambda A^y} \right ] \right ) \leq \delta^2 y \lambda^{1/\alpha} / c^{1/\alpha}. \label{loglaplacefonctexpo3}
\end{eqnarray}
%
%
Also, let us choose $M \in ]0, 1[$ small enough so that 
\begin{eqnarray}
\forall \lambda \in [0, M], \ 0 \leq - \log \left ( \mathbb{E} \left [ e^{-\lambda A^y} \right ] \right ) \leq 2 \lambda \mathbb{E} \left [ A^y \right ]. \label{loglaplacefonctexpo4}
\end{eqnarray}

For any $\lambda > \lambda_{\delta}$ we define $n_1(\lambda) := \lfloor \log(\lambda/\lambda_{\delta})/y \rfloor - 1$ and $n_2(\lambda) := \lfloor \log(\lambda/M)/y \rfloor$. {$n_2(\lambda) > n_1(\lambda)$ is guaranteed because we have $\lambda_{\delta} > 1 > M$. From the definition of $\mathcal{M} (\lambda)$ we can write, for $\lambda > \lambda_{\delta}$,} 
\begin{eqnarray}
\mathcal{M} (\lambda) = T_1(\lambda) + T_2(\lambda) + T_3(\lambda), \label{vsharpqueue08bonus}
\end{eqnarray}
with
\begin{align}
T_1(\lambda) & := \sum_{k=0}^{n_1(\lambda)} - \log \left ( \mathbb{E} \left [ e^{-\lambda e^{-ky} A^y} \right ] \right ), \ T_2(\lambda) := \sum_{k=n_1(\lambda) +1}^{n_2(\lambda)} - \log \left ( \mathbb{E} \left [ e^{-\lambda e^{-ky} A^y} \right ] \right ), \nonumber \\
T_3(\lambda) & := \sum_{k=n_2(\lambda) +1}^{+\infty} - \log \left ( \mathbb{E} \left [ e^{-\lambda e^{-ky} A^y} \right ] \right ). \label{loglaplacefonctexpo5}
\end{align}

From the definition of $n_1(\lambda)$, \eqref{loglaplacefonctexpo3} can be applied to each term of the sum defining $T_1(\lambda)$, we thus have
\[ T_1(\lambda) \leq \delta^2 y ( \lambda / c)^{1/\alpha} \sum_{k=0}^{n_1(\lambda)} e^{-k y /\alpha} = \delta^2 y ( \lambda / c)^{1/\alpha} \frac{1 - e^{-y (n_1(\lambda) + 1)/\alpha}}{1-e^{-y /\alpha}}. \]
We get that for $\lambda$ large enough 
\begin{eqnarray}
T_1(\lambda) \leq \delta^2 y ( \lambda / c)^{1/\alpha} \frac{1}{1-e^{-y /\alpha}}. \label{mino1erterme}
\end{eqnarray}
{Note that from the definitions of $n_1(\lambda)$ and $n_2(\lambda)$ we have that 
\begin{eqnarray}
n_2(\lambda) -n_1(\lambda) \leq \log(\lambda/M)/y - \log(\lambda/\lambda_{\delta})/y + 2 = \frac{2y + \log(\lambda_{\delta}/M)}{y} =: C_y \in ]0, +\infty[, \label{intermn_1n_21}
\end{eqnarray}
where the positivity is ensured by $\lambda_{\delta} > M$. Also, from the definitions of $n_1(\lambda)$ and $n_2(\lambda)$ we have 
\begin{eqnarray}
(n_1(\lambda) + 1)y \geq \log(\lambda/\lambda_{\delta}) - y, \ \ \ (n_2(\lambda) + 1)y \geq \log(\lambda/M). \label{intermn_1n_22}
\end{eqnarray}
Using the monotonicity of the Laplace transform, \eqref{intermn_1n_21}, and the first part of \eqref{intermn_1n_22} we get 
\begin{align}
0 \leq T_2(\lambda) & \leq -(n_2(\lambda) -n_1(\lambda)) \log \left ( \mathbb{E} \left [ e^{-\lambda e^{-(n_1(\lambda) + 1)y} A^y} \right ] \right ) \nonumber \\
& \leq - C_y \log \left ( \mathbb{E} \left [ e^{-\lambda_{\delta} e^{y} A^y} \right ] \right ) < +\infty. \label{cutsum2bonus}
\end{align}
From the definition of $n_2(\lambda)$, \eqref{loglaplacefonctexpo4} can be applied to each term of the sum defining $T_3(\lambda)$, we thus have} 
\begin{align}
0 \leq T_3(\lambda) \leq 2 \lambda \mathbb{E} \left [ A^y \right ]  \sum_{k=n_2(\lambda) +1}^{+\infty} e^{-ky} & = 2 \lambda e^{-y(n_2(\lambda) +1)} \mathbb{E} \left [ A^y \right ] /(1-e^{-y}) \nonumber \\
& \leq 2 M \mathbb{E} \left [ A^y \right ] /(1-e^{-y}) < +\infty, \label{cutsum3bonus}
\end{align}
{where we have used the second part of \eqref{intermn_1n_22} to pass from the first line to the second.} 

Putting \eqref{mino1erterme}, \eqref{cutsum2bonus}, and \eqref{cutsum3bonus} into \eqref{vsharpqueue08bonus} we obtain that for $\lambda$ large enough 
\[ \limsup_{\lambda \rightarrow +\infty} \mathcal{M} (\lambda) / \lambda^{1/\alpha} \leq \frac{\delta^2 y}{c^{1/\alpha} (1-e^{-y /\alpha})}. \]
Since, from the definition $\mathcal{M} (\lambda) = - \log ( \mathbb{E} [ e^{-\lambda I(V^{\uparrow})} ] )$ which does not depend on $\delta$ nor on $y$, we can let $\delta$ go to $1$ and then $y$ go to $0$ in the above expression. We obtain 
\[ \limsup_{\lambda \rightarrow +\infty} - \log \left ( \mathbb{E} \left [ e^{-\lambda I(V^{\uparrow})} \right ] \right ) / \lambda^{1/\alpha} \leq \alpha / c^{1/\alpha}. \]

For the first point of the proposition, we proceed exactly as we just did for the second point, using the lower bound of \eqref{asyptlaplay} instead of the upper bound. In particular $\sum_{k=0}^{n_1(\lambda)} e^{-k y /\alpha}$ from the above argument is {replaced} by $e^{-y /\alpha} \sum_{k=0}^{n_1(\lambda)} e^{-k y /\alpha}$. We finally get, for $y > 0$, $\delta \in ]0, 1[$ and $\lambda$ large enough, 
\[ T_1(\lambda) \geq \delta^2 y (\lambda/C)^{1/\alpha} e^{-y/\alpha} \frac{1 - e^{-(n_1(\lambda) + 1)y/\alpha}}{1 - e^{-y/\alpha}}. \]
Then, since $n_1(\lambda)$ converges toward $+\infty$ as $\lambda$ goes to infinity, $T_2(\lambda) \geq 0$ and $T_3(\lambda) \geq 0$, we get $\liminf_{\lambda \rightarrow +\infty} \mathcal{M} (\lambda) / \lambda^{1/\alpha} \geq \delta^2 y (\lambda/C)^{1/\alpha} e^{-y/\alpha} /(1 - e^{-y/\alpha})$. Letting $\delta$ go to $1$ and then $y$ go to $0$ we obtain 
\[ \liminf_{\lambda \rightarrow +\infty} - \log \left ( \mathbb{E} \left [ e^{-\lambda I(V^{\uparrow})} \right ] \right ) / \lambda^{1/\alpha} \geq \alpha / C^{\alpha}, \]
which terminates the proof. 


\end{proof}

We also study what happens when $\Psi_V$ has regular variation in the next proposition: 

\begin{prop} \label{loglaplacefonctexpocasregvar}

Assume that $\Psi_V$ has $\alpha$-regular variation at $+ \infty$ for some $\alpha \in ]1, 2]$. Then, 
\begin{eqnarray}
- \log \left ( \mathbb{E} \left [ e^{-\lambda I(V^{\uparrow})} \right ] \right ) \underset{\lambda \rightarrow +\infty}{\sim} \alpha \ \Phi_{V^{\sharp}}(\lambda). \label{loglaplacefonctexporegvar}
\end{eqnarray}

\end{prop}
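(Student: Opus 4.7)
The strategy closely follows the scheme of the proof of Proposition \ref{loglaplacefonctexpo}, but exploits the additional regularity assumption to get a precise equivalent rather than only one-sided estimates. The plan is to fix $y > 0$ and expand, using the series decomposition \eqref{kesten2.0},
\[
\mathcal{M}(\lambda) := -\log \mathbb{E}\left[e^{-\lambda I(V^{\uparrow})}\right] = \sum_{k \geq 0} \left( -\log \mathbb{E}\left[e^{-\lambda e^{-ky} A^y}\right] \right),
\]
to control each summand through Proposition \ref{loglaplaceA}, to recognize in the resulting sum an approximation of a geometric series weighted by $\Phi_{V^{\sharp}}(\lambda)$, and finally to let $y \to 0$.

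Since $\Psi_V$, and hence $\Psi_{V^{\sharp}}(\cdot) = \Psi_V(\kappa + \cdot)$, is $\alpha$-regularly varying at $+\infty$, the inverse function $\Phi_{V^{\sharp}} = \Psi_{V^{\sharp}}^{-1}$ is $(1/\alpha)$-regularly varying at $+\infty$. Potter's bounds then provide, for every $\eta > 0$, some threshold $\lambda_\eta$ such that whenever $\lambda \geq \lambda_\eta$ and $\lambda e^{-ky} \geq \lambda_\eta$,
\[
(1-\eta)\, e^{-ky(1/\alpha + \eta)} \leq \frac{\Phi_{V^{\sharp}}(\lambda e^{-ky})}{\Phi_{V^{\sharp}}(\lambda)} \leq (1+\eta)\, e^{-ky(1/\alpha - \eta)}.
\]
Next I split the sum defining $\mathcal{M}(\lambda)$ into the same three pieces $T_1(\lambda), T_2(\lambda), T_3(\lambda)$ as in \eqref{loglaplacefonctexpo5}, with cutoffs $n_1(\lambda), n_2(\lambda)$ chosen as before so that Proposition \ref{loglaplaceA} applies uniformly in $T_1$. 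The estimates \eqref{cutsum2bonus} and \eqref{cutsum3bonus} carry over unchanged, so $T_2(\lambda)$ and $T_3(\lambda)$ remain bounded by constants independent of $\lambda$; since $\Phi_{V^{\sharp}}(\lambda) \to +\infty$, both are $o(\Phi_{V^{\sharp}}(\lambda))$.

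For the dominant block $T_1(\lambda)$, I apply the upper bound of \eqref{asyptlaplay} together with the Potter upper bound term by term to get
\[
T_1(\lambda) \leq (1+\eta)^2\, y\, \Phi_{V^{\sharp}}(\lambda) \sum_{k=0}^{n_1(\lambda)} e^{-ky(1/\alpha - \eta)} \leq \frac{(1+\eta)^2\, y\, \Phi_{V^{\sharp}}(\lambda)}{1 - e^{-y(1/\alpha - \eta)}},
\]
and symmetrically, using the lower sides of \eqref{asyptlaplay} and of Potter's bound,
\[
T_1(\lambda) \geq (1-\eta)^2\, y\, \Phi_{V^{\sharp}}(\lambda) \cdot \frac{e^{-y(1/\alpha + \eta)}\left( 1 - e^{-(n_1(\lambda)+1)y(1/\alpha + \eta)} \right)}{1 - e^{-y(1/\alpha + \eta)}}.
\]
Since $n_1(\lambda) \to +\infty$, dividing by $\Phi_{V^{\sharp}}(\lambda)$ and sending $\lambda \to +\infty$, then $\eta \to 0$, will yield
\[
\frac{y\, e^{-y/\alpha}}{1 - e^{-y/\alpha}} \leq \liminf_{\lambda \to +\infty} \frac{\mathcal{M}(\lambda)}{\Phi_{V^{\sharp}}(\lambda)} \leq \limsup_{\lambda \to +\infty} \frac{\mathcal{M}(\lambda)}{\Phi_{V^{\sharp}}(\lambda)} \leq \frac{y}{1 - e^{-y/\alpha}}.
\]
Both bounds converge to $\alpha$ as $y \to 0^+$, which delivers \eqref{loglaplacefonctexporegvar}.

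The main technical obstacle will be to guarantee the uniformity in $k$ of the regular-variation estimate over the full range $0 \leq k \leq n_1(\lambda)$: the Uniform Convergence Theorem only covers fixed finite ranges, whereas here $n_1(\lambda) \to +\infty$. This is precisely what Potter's bounds are designed to handle, and the controlled exponential decay $e^{-ky(1/\alpha \pm \eta)}$ provides the summable domination needed to safely interchange the limit $\lambda \to +\infty$ with the summation over $k$ in $T_1$.
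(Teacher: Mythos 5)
Your proof is correct, and the overall architecture is identical to the paper's: fix $y>0$, split $\mathcal{M}(\lambda)$ into the three blocks $T_1, T_2, T_3$ with the same cut-offs, bound $T_1$ via Proposition~\ref{loglaplaceA} and a uniform regular-variation estimate valid over $0 \leq k \leq n_1(\lambda)$, argue that $T_2, T_3$ are $o(\Phi_{V^{\sharp}}(\lambda))$, and send first $\lambda \to \infty$, then the accuracy parameter to $0$, then $y \to 0$.

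The one genuine difference is the tool used to get the uniform ratio estimate for $\Phi_{V^{\sharp}}(e^{-ky}\lambda)/\Phi_{V^{\sharp}}(\lambda)$ over an expanding range of $k$. The paper invokes Karamata's representation theorem, writes $\Phi_{V^{\sharp}}(\lambda) = \lambda^{1/\alpha}\exp\bigl(c(\lambda) + \int_1^{\lambda} r(t)/t\,dt\bigr)$, and bounds the oscillation of $c$ and the integral of $r$ directly, obtaining $\Phi_{V^{\sharp}}(e^{-ky}\lambda)/\Phi_{V^{\sharp}}(\lambda) \leq e^{2\epsilon} e^{-k(y/\alpha - \epsilon y)}$ (and the symmetric lower bound). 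You instead invoke Potter's bounds, which deliver exactly the same two-sided inequality in one line. Since Potter's bounds are a corollary of Karamata's representation, the two routes are mathematically equivalent; yours is marginally more compact since it uses the ready-packaged result, while the paper's is more self-contained in that it shows explicitly where the constants come from. Either is fine, and your handling of the sums, the residual terms $T_2, T_3$, and the double limit $\eta \to 0$, $y \to 0$ matches the paper's logic exactly.

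One small bookkeeping point to make your write-up airtight: you should say explicitly that you increase $\lambda_{\delta}$ (or redefine the threshold) so that it exceeds both the threshold from Proposition~\ref{loglaplaceA} and the Potter threshold $\lambda_{\eta}$, so that both estimates apply simultaneously on the whole range $k \leq n_1(\lambda)$; the paper does this implicitly when it says "by increasing $\lambda_{\epsilon}$ if necessary."
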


\begin{proof}

The idea is to modify a little the proof of Proposition \ref{loglaplacefonctexpo}. Let us fix $y > 0$ and $\epsilon \in ]0, 1/\alpha[$. We keep the notation 
\[ \mathcal{M} (\lambda) := - \log \left ( \mathbb{E} \left [ e^{-\lambda I(V^{\uparrow})} \right ] \right ) = \sum_{k \geq 0} - \log \left ( \mathbb{E} \left [ e^{-\lambda e^{-ky} A^y} \right ] \right ). \] 
Since $\Psi_V$ has $\alpha$-regular variation, $\Phi_{V^{\sharp}}$ has $1/\alpha$-regular variation so, thanks to Karamata representation theorem we can write 
\begin{eqnarray}
\Phi_{V^{\sharp}}(\lambda) = \lambda^{\frac{1}{\alpha}} \ \exp \left ( c (\lambda) + \int_{1}^{\lambda} \frac{r(t)}{t} dt \right ). \label{karamata}
\end{eqnarray}
In this representation $c$ is a bounded measurable function that converges to a constant $c$ at $+\infty$, and $r$ is a bounded measurable function that converges to $0$ at $+\infty$. Let us chose $\lambda_{\epsilon} > 1$ large enough such that for all $\lambda \geq \lambda_{\epsilon}$: \eqref{asyptlaplay} is satisfied, $c - \epsilon \leq c (\lambda) \leq c + \epsilon$, and $|r(\lambda)| \leq \epsilon$. We also fix $M$ as in \eqref{loglaplacefonctexpo4} and for any $\lambda > \lambda_{\epsilon}$ we define $n_1(\lambda) := \lfloor \log(\lambda/{\lambda_{\epsilon}})/y \rfloor - 1$ and $n_2(\lambda) := \lfloor \log(\lambda/M)/y \rfloor$. For $\lambda > \lambda_{\epsilon}$, we then still have $\mathcal{M} (\lambda) = T_1(\lambda) + T_2(\lambda) + T_3(\lambda)$ with $T_1(\lambda), T_2(\lambda)$ and $T_3(\lambda)$ as in \eqref{loglaplacefonctexpo5}. 
Here again, from the definition of $n_1(\lambda)$, \eqref{asyptlaplay} can be applied to each term of the sum defining $T_1(\lambda)$, and then the representation \eqref{karamata}. We get 
\begin{align*}
\frac{T_1(\lambda)}{\Phi_{V^{\sharp}}(\lambda)} & \leq (1 + \epsilon) y \sum_{k=0}^{n_1(\lambda)} \frac{\Phi_{V^{\sharp}}(e^{-ky} \lambda)}{\Phi_{V^{\sharp}}(\lambda)} \\
& = (1 + \epsilon) y \sum_{k=0}^{n_1(\lambda)} \exp \left ( -\frac{ky}{\alpha} + c (e^{-ky} \lambda) - c (\lambda) - \int_{e^{-ky} \lambda}^{\lambda} \frac{r(t)}{t} dt \right ). 
\end{align*}
For $k \leq n_1(\lambda)$, we have $e^{-ky} \lambda \geq \lambda_{\epsilon}$ so, according to the choice of $\lambda_{\epsilon}$, we have $- 2 \epsilon \leq c (e^{-ky} \lambda) - c (\lambda) \leq 2 \epsilon$ and $\epsilon \log(\lambda / e^{-ky} \lambda) \geq \int_{e^{-ky} \lambda}^{\lambda} \frac{r(t)}{t} dt \geq - \epsilon \log(\lambda / e^{-ky} \lambda) = - \epsilon k y$. As a consequence, 
\begin{align*}
\frac{T_1(\lambda)}{\Phi_{V^{\sharp}}(\lambda)} & \leq (1 + \epsilon) y e^{2 \epsilon} \sum_{k=0}^{n_1(\lambda)} \exp \left ( - k \left ( \frac{y}{\alpha} - \epsilon y \right ) \right ) \\
& = (1 + \epsilon) y e^{2 \epsilon} \frac{1 - e^{-(n_1(\lambda) + 1)(y/\alpha - \epsilon y)}}{1 - e^{-y/\alpha + \epsilon y}} \leq \frac{(1 + \epsilon) y e^{2 \epsilon}}{1 - e^{-y/\alpha + \epsilon y}}. 
\end{align*}
Then, $T_2(\lambda)$ and $T_3(\lambda)$ are bounded as in the proof of Proposition \ref{loglaplacefonctexpo} so we get 
\[ \limsup_{\lambda \rightarrow +\infty} \mathcal{M} (\lambda) / \Phi_{V^{\sharp}}(\lambda) \leq \frac{(1 + \epsilon) y e^{2 \epsilon}}{1 - e^{-y/\alpha + \epsilon y}}. \]
Here again, the left hand side does not depend on $\epsilon$ nor on $y$ so we can let $\epsilon$ go to $0$ and then $y$ go to $0$ in the above expression. We obtain 
\[ \limsup_{\lambda \rightarrow +\infty} - \log \left ( \mathbb{E} \left [ e^{-\lambda I(V^{\uparrow})} \right ] \right ) / \Phi_{V^{\sharp}}(\lambda) \leq \alpha. \]
To prove a lower bound, we proceed exactly as we just did, using the lower bound of \eqref{asyptlaplay}. In particular $\sum_{k=0}^{n_1(\lambda)} \Phi_{V^{\sharp}}(e^{-ky} \lambda)/\Phi_{V^{\sharp}}(\lambda)$ from the above argument is {replaced} by 

\noindent $\sum_{k=1}^{n_1(\lambda) + 1} \Phi_{V^{\sharp}}(e^{-ky} \lambda)/\Phi_{V^{\sharp}}(\lambda)$. We finally get 
\[ \frac{T_1(\lambda)}{\Phi_{V^{\sharp}}(\lambda)} \geq (1 - \epsilon) y e^{-2 \epsilon - y/\alpha - \epsilon y} \frac{1 - e^{-(n_1(\lambda) + 1)(y/\alpha + \epsilon y)}}{1 - e^{-y/\alpha - \epsilon y}}. \]
Then, since $n_1(\lambda)$ converges toward $+\infty$ as $\lambda$ goes to infinity, $T_2(\lambda) \geq 0$ and $T_3(\lambda) \geq 0$, we get $\liminf_{\lambda \rightarrow +\infty} \mathcal{M} (\lambda) / \Phi_{V^{\sharp}}(\lambda) \geq (1 - \epsilon) y e^{-2 \epsilon - y/\alpha - \epsilon y} / (1 - e^{-y/\alpha - \epsilon y})$. Letting $\epsilon$ go to $0$ and then $y$ go to $0$ we obtain 
\[ \liminf_{\lambda \rightarrow +\infty} - \log \left ( \mathbb{E} \left [ e^{-\lambda I(V^{\uparrow})} \right ] \right ) / \Phi_{V^{\sharp}}(\lambda) \geq \alpha, \]
which terminates the proof. 

\end{proof}

\subsection{Tail at $0$ of $I(V^{\uparrow})$: proof of Theorems \ref{queuefonctexpo} and \ref{varreg}}


We now prove Theorems \ref{queuefonctexpo} and \ref{varreg} by using Propositions \ref{loglaplacefonctexpo} and \ref{loglaplacefonctexpocasregvar} together with the deep link that exists between the left tail of a random variable and the asymptotic behavior of its Laplace transform. 

\begin{proof} of Theorem \ref{queuefonctexpo}

We assume that there is $\alpha > 1$ and a positive constant $C$ such that for all $\lambda$ large enough we have $\Psi_V(\lambda) \leq C \lambda^{\alpha}$. We fix $\delta \in ]0, 1[$. According to the first point of Proposition \ref{loglaplacefonctexpo}, there exists $\lambda_{\delta} > 0$ such that for all $\lambda > \lambda_{\delta}$ we have 
\begin{eqnarray}
\mathbb{E} \left [ e^{-\lambda I(V^{\uparrow})} \right ] \leq \exp \left ( - \delta^{1-1/\alpha} \alpha \lambda^{1/\alpha} / C^{1/\alpha} \right ). \label{queuefonctexpo1.1}
\end{eqnarray}
Let us fix $x \in ]0, \delta^{(\alpha-1)/\alpha} C^{-1/\alpha} \lambda_{\delta}^{(1-\alpha)/\alpha}[$. Using Markov's inequality and \eqref{queuefonctexpo1.1} we get that for any $\lambda > \lambda_{\delta}$, 
\[ \mathbb{P} \left ( I(V^{\uparrow}) \leq x \right ) \leq e^{\lambda x} \mathbb{E} \left [ e^{-\lambda I(V^{\uparrow})} \right ] \leq \exp \left ( \lambda x - \delta^{1-1/\alpha} \alpha \lambda^{1/\alpha} / C^{1/\alpha} \right ). \]
{Let $h : x \mapsto \delta C^{-1/(\alpha - 1)} x^{-\alpha/(\alpha - 1)}$. $h$ is decreasing on $\mathbb{R}_+$ since $\alpha > 1$, moreover 

\noindent $h(\delta^{(\alpha-1)/\alpha} C^{-1/\alpha} \lambda_{\delta}^{(1-\alpha)/\alpha})=\lambda_{\delta}$. Therefore, since $x \in ]0, \delta^{(\alpha-1)/\alpha} C^{-1/\alpha} \lambda_{\delta}^{(1-\alpha)/\alpha}[$, we have $h(x) \geq \lambda_{\delta}$ so, in the above inequality, we can replace $\lambda$ by $h(x) = \delta C^{-1/(\alpha - 1)} x^{-\alpha/(\alpha - 1)}$.} We obtain 
\[ \mathbb{P} \left ( I(V^{\uparrow}) \leq x \right ) \leq \exp \left ( - \delta (\alpha - 1) / (C x)^{1/(\alpha - 1)} \right ), \]
which is \eqref{queuefonctexpo1}. 

We now prove the second point. Assume that there is $\alpha > 1$ and a positive constant $c$ such that for all $\lambda$ large enough we have $\Psi_V(\lambda) \geq c \lambda^{\alpha}$. We fix $\delta > 1$ and $r \in ]0, 1[$. According to the second point of Proposition \ref{loglaplacefonctexpo}, there exists $\lambda_{\delta} > 0$ such that for all $\lambda > \lambda_{\delta}$ we have 
\begin{eqnarray}
\mathbb{E} \left [ e^{-\lambda I(V^{\uparrow})} \right ] \geq \exp \left ( - \delta^{r(\alpha - 1)/\alpha} \alpha \lambda^{1/\alpha} / c^{1/\alpha} \right ). \label{queuefonctexpo1.2}
\end{eqnarray}
Let us fix $x \in ]0, \delta^{(\alpha - 1)/\alpha} \alpha c^{-1/\alpha} \lambda_{\delta}^{(1-\alpha)/\alpha}[$. Using \eqref{queuefonctexpo1.2} we get that for any $\lambda > \lambda_{\delta}$, 
\begin{align*}
\exp \left ( - \delta^{r(\alpha - 1)/\alpha} \alpha \lambda^{1/\alpha} / c^{1/\alpha} \right ) & \leq \mathbb{E} \left [ e^{- \lambda I(V^{\uparrow})} \right ] = \mathbb{E} \left [ e^{- \lambda I(V^{\uparrow})} \mathds{1}_{ \left \{ I(V^{\uparrow}) \leq x \right \} } \right ] + \mathbb{E} \left [ e^{- \lambda I(V^{\uparrow})} \mathds{1}_{ \left \{ I(V^{\uparrow}) > x \right \} } \right ] \\
& \leq \mathbb{P} \left ( I(V^{\uparrow}) \leq x \right ) + \exp \left (-\lambda x \right ), 
\end{align*}
so we get
\[ \mathbb{P} \left ( I(V^{\uparrow}) \leq x \right ) \geq \exp \left ( - \delta^{r(\alpha - 1)/\alpha} \alpha \lambda^{1/\alpha} / c^{1/\alpha} \right ) - \exp \left (-\lambda x \right ). \]
{Let $\tilde h : x \mapsto \delta \alpha^{\alpha /(\alpha - 1)} c^{-1/(\alpha - 1)} x^{-\alpha/(\alpha - 1)}$. $\tilde h$ is decreasing on $\mathbb{R}_+$ since $\alpha > 1$, moreover $\tilde h(\delta^{(\alpha - 1)/\alpha} \alpha c^{-1/\alpha} \lambda_{\delta}^{(1-\alpha)/\alpha}) = \lambda_{\delta}$. Therefore, since $x \in ]0, \delta^{(\alpha - 1)/\alpha} \alpha c^{-1/\alpha} \lambda_{\delta}^{(1-\alpha)/\alpha}[$, we have $\tilde h(x) \geq \lambda_{\delta}$ so, in the above inequality, we can replace $\lambda$ by $\tilde h(x) = \delta \alpha^{\alpha /(\alpha - 1)} c^{-1/(\alpha - 1)} x^{-\alpha/(\alpha - 1)}$.} We obtain 
\[ \mathbb{P} \left ( I(V^{\uparrow}) \leq x \right ) \geq \exp \left ( - \delta^{(1+r(\alpha - 1))/\alpha} \alpha^{\alpha/(\alpha-1)} / (c x)^{1/(\alpha - 1)} \right ) - \exp \left ( - \delta \alpha^{\alpha/(\alpha-1)} / (c x)^{1/(\alpha - 1)} \right ). \]
Since $\delta > \delta^{(1+r(\alpha - 1))/\alpha}$, the second term converges to $0$ faster than the first one when $x$ goes to $0$. We thus get, for $x$ small enough, 
\[ \mathbb{P} \left ( I(V^{\uparrow}) \leq x \right ) \geq \frac1{2} \exp \left ( - \delta^{(1+r(\alpha - 1))/\alpha} \alpha^{\alpha/(\alpha-1)} / (c x)^{1/(\alpha - 1)} \right ) \geq \exp \left ( - \delta \alpha^{\alpha/(\alpha-1)} / (c x)^{1/(\alpha - 1)} \right ). \]
The last inequality is true for $x$ small enough and comes from the fact that $\delta > \delta^{(1+r(\alpha - 1))/\alpha}$. The above is precisely \eqref{queuefonctexpo2}. 

\end{proof}

\begin{proof} of Theorem \ref{varreg}

We assume that $\Psi_V$ has $\alpha$-regular variation at $+ \infty$ for some $\alpha \in ]1, 2]$. From Proposition \ref{loglaplacefonctexpocasregvar} we deduce that 
\[ - \log \left ( \mathbb{E} \left [ e^{-\lambda I(V^{\uparrow})} \right ] \right ) \underset{\lambda \rightarrow +\infty}{\sim} \alpha \ \Phi_{V^{\sharp}}(\lambda). \]
Then, since $\Phi_{V^{\sharp}}$ is the inverse function of $\Psi_V(\kappa + .)$, it has $1/\alpha$-regular variation at $+\infty$ and the application of De Bruijn's Theorem (see Theorem 4.12.9 in \cite{regvar}) yields the result. 

\end{proof}

%
%
%


\subsection{Connection between $I(V^{\uparrow})$ and $I(V^{\sharp})$: Proof of Propositions \ref{vposqueue0decond} and \ref{encadrementpatiesavov}} \label{connection}

In this subsection, we assume that $V$ does not oscillate (so that $I(V^{\sharp}) < +\infty$) and we relate 
$I(V^{\uparrow})$ and $I(V^{\sharp})$. This allows to prove Proposition \ref{vposqueue0decond} and to combine it with Theorem 5.24 of \cite{Patieref10} to prove Proposition \ref{encadrementpatiesavov}. 
Note that in particular, when $V$ drifts to $+\infty$, this is only a {comparison} between 
$I(V^{\uparrow})$ and $I(V)$. 

\begin{prop} \label{vpospartoffonctclassic}

If $V$ does not oscillates we have 
\begin{eqnarray}
I(V^{\sharp}) \overset{\mathcal{L}}{=} S_T + I(V^{\uparrow}), \label{factconvo}
\end{eqnarray}
where the two terms of the right hand side are independent, and $S_T$ is as in Lemma \ref{=subexpbisbis}. 
\end{prop}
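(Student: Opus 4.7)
The plan is to split the integral defining $I(V^{\sharp})$ at the last passage time of $V^{\sharp}$ at $0$, then identify each piece using the two lemmas referenced in the statement.

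First I would write
\[ I(V^{\sharp}) = \int_0^{\mathcal{R}(V^{\sharp}, 0)} e^{-V^{\sharp}(t)} dt + \int_{\mathcal{R}(V^{\sharp}, 0)}^{+\infty} e^{-V^{\sharp}(t)} dt. \]
Since $V^{\sharp}$ drifts to $+\infty$ (we are in the non-oscillating case), $\mathcal{R}(V^{\sharp},0)$ is almost surely finite, and since $V^{\sharp}$ has no positive jumps the supremum defining the last passage time is attained, so $V^{\sharp}(\mathcal{R}(V^{\sharp},0)) = 0$. A change of variable in the second integral then gives
\[ \int_{\mathcal{R}(V^{\sharp}, 0)}^{+\infty} e^{-V^{\sharp}(t)} dt = \int_0^{+\infty} e^{- V^{\sharp}(t + \mathcal{R}(V^{\sharp}, 0))} dt. \]

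Next I apply Lemma \ref{=subexpbisbis} directly to the first integral, which yields that it is equal in law to $S_T$, with $S$ and $T$ as described in that lemma. For the second integral, the second point of Lemma \ref{dernierpassage2} tells us that $(V^{\sharp}(t + \mathcal{R}(V^{\sharp},0)),\ t \geq 0)$ is equal in law to $V^{\uparrow}$, so the second integral is equal in law to $I(V^{\uparrow})$. The same statement in Lemma \ref{dernierpassage2} provides the independence between the pre-$\mathcal{R}(V^{\sharp},0)$ trajectory and the shifted trajectory, which transfers to independence of the two integrals, yielding \eqref{factconvo}.

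I do not anticipate a serious obstacle: the heavy lifting has already been done in the preliminary section. The only point to be careful with is the irregular (bounded variation) case, where one should check that $\mathcal{R}(V^{\sharp}, 0)$ is well-defined and that $V^{\sharp}(\mathcal{R}(V^{\sharp}, 0)) = 0$; this follows from the absence of positive jumps together with the fact that $V^{\sharp}$ drifts to $+\infty$, so that the set of times where $V^{\sharp}$ is at $0$ is closed and bounded. Everything else is a bookkeeping exercise combining Lemmas \ref{=subexpbisbis} and \ref{dernierpassage2}.
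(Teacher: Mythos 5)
Your proof is correct and follows essentially the same route as the paper: split $I(V^{\sharp})$ at the last passage time $\mathcal{R}(V^{\sharp},0)$, identify the tail piece with $I(V^{\uparrow})$ and obtain independence from the second point of Lemma \ref{dernierpassage2}, and identify the pre-$\mathcal{R}(V^{\sharp},0)$ piece with $S_T$ via Lemma \ref{=subexpbisbis}. Your extra remark that $V^{\sharp}(\mathcal{R}(V^{\sharp},0))=0$ follows from the absence of positive jumps and drift to $+\infty$ is a careful touch the paper leaves implicit.
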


Thanks to Proposition \ref{vpospartoffonctclassic}, we can explain heuristically why the functionals $I(V^{\uparrow})$ and $I(V^{\sharp})$ have similar left tails but different right tails: 

\begin{remarque}
In \eqref{factconvo}, the term $S_T$ contains the contributions of the excursions of $V^{\sharp}$ that meet the half-line of negative real numbers, it is thus heavy tailed. On the other hand, the right tail of $I(V^{\uparrow})$ is given by Theorem \ref{finiteandexpomoments} and is at most exponential. This is why only the contribution of the term $S_T$ is relevant regarding the right tail of $I(V^{\sharp})$ and the two functionals have different right tails. For the left tails, the one of $S_T$ is at least the left tail of an exponential random variable (see Lemma \ref{subexp} below) while the left tail of $I(V^{\uparrow})$ is lighter according to our preceding results. This is why the left tail of $I(V^{\sharp})$ is mainly determined by the term $I(V^{\uparrow})$ in \eqref{factconvo}. 
\end{remarque}


\begin{proof} of Proposition \ref{vpospartoffonctclassic}

We write 
\begin{align} I(V^{\sharp}) & = \int_0^{\mathcal{R}(V^{\sharp}, 0)} e^{- V^{\sharp}(t)} dt + \int_{\mathcal{R}(V^{\sharp}, 0)}^{+ \infty} e^{- V^{\sharp}(t)} dt \nonumber \\
& = \int_0^{\mathcal{R}(V^{\sharp}, 0)} e^{- V^{\sharp}(t)} dt + \int_0^{+ \infty} e^{- V^{\sharp}(t + \mathcal{R}(V^{\sharp}, 0))} dt \nonumber \\
& \overset{\mathcal{L}}{=} \int_0^{\mathcal{R}(V^{\sharp}, 0)} e^{- V^{\sharp}(t)} dt + I(V^{\uparrow}). \label{vposqueue0decond1}
\end{align}
We have used Lemma \ref{dernierpassage2} for the last equality in which the two terms $\int_0^{\mathcal{R}(V^{\sharp}, 0)} e^{- V^{\sharp}(t)} dt$ and $I(V^{\uparrow})$ are independent. 

Now, thanks to Lemma \ref{=subexpbisbis}, the term $\int_0^{\mathcal{R}(V^{\sharp}, 0)} e^{- V^{\sharp}(t)} dt $ has the same law as $S_T$ with $S_T$ as in the lemma. This achieves the proof. 


\end{proof}

Similarly to the proof of Proposition \ref{decomposition}, we decomposed $I(V^{\sharp})$ as the sum of two independent random variables, one having the same law as a subordinator stopped at an independent exponential time and the other having the same law as $I(V^{\uparrow})$. We now need a lemma about the asymptotic tail at $0$ of a subordinator stopped at an independent exponential time. 

\begin{lemme} \label{subexp}

Let $S$ be a subordinator and $T$ an independent exponential random variable, there exists a positive constant $c$ such that for all $x$ small enough
\[ \mathbb{P} \left ( S_T < x \right ) \geq cx. \]

\end{lemme}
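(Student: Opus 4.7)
The plan is a two-step reduction: first, replace $S$ by a subordinator of finite mean by truncating large jumps; then apply Markov's inequality to obtain a linear lower bound. Let $p>0$ denote the rate of $T$ and $\nu_S$ the L\'evy measure of $S$, and set $\Lambda := \nu_S((1,+\infty))$.

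By the L\'evy-It\^o decomposition, write $S = \bar S + B$, where $\bar S$ is the subordinator with the same drift as $S$ and L\'evy measure $\nu_S|_{(0,1]}$, and $B$ is an independent compound Poisson process collecting the jumps of $S$ larger than $1$. Then $\bar S$ has finite mean $\mu := \mathbb{E}[\bar S_1] < +\infty$. Letting $\xi$ denote the first jump time of $B$ (exponential with rate $\Lambda$, independent of $\bar S$ and $T$), we have $S_T = \bar S_T$ on $\{T < \xi\}$, so conditioning on $T$ and integrating yields
\begin{equation*}
\mathbb{P}(S_T < x) \;\geq\; \mathbb{P}(\bar S_T < x,\, T < \xi) \;=\; \int_0^{+\infty} p\, e^{-(p+\Lambda)t}\, \mathbb{P}(\bar S_t < x)\, dt \;=\; \frac{p}{p+\Lambda}\, \mathbb{P}(\bar S_{T'} < x),
\end{equation*}
where $T'$ is exponential with rate $p+\Lambda$ independent of $\bar S$. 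It remains to bound $\mathbb{P}(\bar S_{T'} < x)$ from below.

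If $\mu = 0$ then $\bar S \equiv 0$, $\mathbb{P}(\bar S_{T'} < x) = 1$, and the bound is immediate. Otherwise set $\delta := x/(2\mu)$: monotonicity of the subordinator $\bar S$ gives $\{T' < \delta,\, \bar S_\delta < x\} \subset \{\bar S_{T'} < x\}$, so by independence
\begin{equation*}
\mathbb{P}(\bar S_{T'} < x) \;\geq\; \mathbb{P}(T' < \delta)\, \mathbb{P}(\bar S_\delta < x).
\end{equation*}
For $x$ small, the first factor is $1 - e^{-(p+\Lambda)\delta} \geq (p+\Lambda)\delta/2$, which is of order $x$; by Markov's inequality, $\mathbb{P}(\bar S_\delta \geq x) \leq \mu\delta/x = 1/2$, so the second factor is at least $1/2$. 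Composing these bounds gives the claim. The only subtlety is that $\mathbb{E}[S_1]$ may be infinite (due to large jumps), which is precisely why the truncation step is needed; once $S$ is replaced by the finite-mean $\bar S$, Markov's inequality closes the argument routinely.
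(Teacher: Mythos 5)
Your proof is correct, but it takes a genuinely different route from the paper's. The paper establishes the stronger structural result that $x \mapsto \mathbb{P}(S_T < x)$ is sub-additive, i.e.\ $\mathbb{P}(S_T < x+y) \leq \mathbb{P}(S_T < x) + \mathbb{P}(S_T < y)$ for all $x, y \geq 0$; this is proved in one stroke by applying the Markov property of $S$ at the first passage time $\tau(S, x+)$ together with the memorylessness of $T$, and the linear lower bound then follows from sub-additivity plus positivity at one point. Your argument instead uses the L\'evy--It\^o decomposition to truncate the large jumps, producing a finite-mean subordinator $\bar S$ and an independent compound Poisson part $B$; you reduce to $\bar S$ by conditioning on $T$ preceding the first jump of $B$ (which replaces the rate of the exponential clock by $p + \Lambda$), and then close via Markov's inequality applied to $\bar S_\delta$ with $\delta = x/(2\mu)$. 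Each approach has something to recommend it: the paper's is shorter and yields sub-additivity, a structural fact stronger than what the lemma asks for; yours is more elementary (no strong Markov property at a stopping time, no appeal to memorylessness beyond the definition of $T'$), handles the possibly infinite mean of $S$ by an explicit truncation, and produces an explicit constant $c = p/(8\mu)$ in terms of the rate of $T$ and the truncated mean. Both proofs are valid and self-contained.
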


\begin{proof}

We prove in fact a stronger result: the function $x \mapsto \mathbb{P} \left ( S_T < x \right )$ is sub-additive, that is, 
\begin{eqnarray}
\forall x, y \geq 0, \ \mathbb{P} \left ( S_T < x + y \right ) \leq \mathbb{P} \left ( S_T < x \right ) + \mathbb{P} \left ( S_T < y \right ). \label{subexp0.1}
\end{eqnarray}
The lemma follows easily once \eqref{subexp0.1} is proved. Recall from the introduction the notation $\tau(S, h+)$ for $\tau(S, [h, +\infty[)$. Let $x, y > 0$ (the case where $x=0$ or $y=0$ is obvious), we have 
\begin{align*} \mathbb{P} \left ( S_T < x + y \right ) & = \mathbb{P} \left ( S_T < x \right ) + \mathbb{P} \left ( T \geq \tau(S, x+), \ T < \tau(S, (x+y)+) \right ) \\
& \leq \mathbb{P} \left ( S_T < x \right ) + \mathbb{P} \left ( T \geq \tau(S, x+), \ T < \tau(S, (S_{\tau(S, x+)}+y) + )\right ). \end{align*}
For the inequality we have used that $S_{\tau(S, x+)} \geq x$ almost surely. From the characteristic property of the exponential distribution and the Markov property applied to $S$ at time $\tau(S, x+)$, the latter is equal to 
\[ = \mathbb{P} \left ( S_T < x \right ) + \mathbb{P} \left ( T \geq \tau(S, x+) \right ) \times \mathbb{P} \left ( T < \tau(S, y+) \right ). \]
Since $\mathbb{P} \left ( T \geq \tau(S, x+) \right ) \leq 1$ and $\mathbb{P} \left ( T < \tau(S, y+) \right ) =  \mathbb{P} \left ( S_T < y \right )$ we obtain \eqref{subexp0.1}. 


\end{proof}

We can now prove Proposition \ref{vposqueue0decond}. 

\begin{proof} of Proposition \ref{vposqueue0decond}. 

We are in the case where $V$ drifts to $+\infty$ so $V^{\sharp} = V$. Applying Proposition \ref{vpospartoffonctclassic} we get 
\begin{eqnarray}
\mathbb{P} \left ( I(V) \leq x \right ) \leq \mathbb{P} \left ( I(V^{\uparrow}) \leq x \right ). \label{vposqueue0decond2}
\end{eqnarray}

For $\epsilon > 0$, using the equality in law of Proposition \ref{vpospartoffonctclassic} and Lemma \ref{subexp} we obtain
\[ \mathbb{P} \left ( I(V) \leq (1+\epsilon)x \right ) \geq \mathbb{P} \left ( I(V^{\uparrow}) \leq x \right ) \times \mathbb{P} \left ( S_T \leq \epsilon x \right ) \geq c \epsilon x \mathbb{P} \left ( I(V^{\uparrow}) \leq x \right ), \]
for an appropriate constant $c > 0$ and $x$ small enough. Combining with $(\ref{vposqueue0decond2})$ we get the result. 

\end{proof}

\begin{proof} of Proposition \ref{encadrementpatiesavov} 

If $V$ does not oscillate and has unbounded variation, $V^{\sharp}$ drifts to $+\infty$, has Laplace exponent $\Psi_{V^{\sharp}} = \Psi_V(\kappa + .)$, and $(V^{\sharp})^{\uparrow} = V^{\uparrow}$. Note that, since $I(V^{\sharp})$ is self-decomposable, it is {unimodal}. Then, the mode is not $0$ (because of the continuity of the density or because of the argument of Remark \ref{estimdensite} which is valid since Remark \ref{majouniverselle} is true for $I(V^{\sharp})$). As a consequence, the density of $I(V^{\sharp})$ is non-decreasing on a neighborhood of $0$. Let us denote by $m^{\sharp}$ this density so that for any $x \geq 0$, $\mathbb{P} ( I(V^{\sharp}) \leq x ) = \int_{0}^{x} m^{\sharp}(z) dz$. Let us fix $\delta > 1$. The combination of Proposition \ref{vposqueue0decond} (applied to $V^{\sharp}$ and $\epsilon = \delta - 1$) with the non-decrease of $m^{\sharp}$ gives for all $x$ small enough 
\[ \mathbb{P} \left ( I(V^{\uparrow}) \leq x \right ) \geq \mathbb{P} \left ( I(V^{\sharp}) \leq x \right ) \geq \int_{x/\delta}^{x} m^{\sharp}(z) dz \geq \left ( 1 - \frac1{\delta} \right ) x \ m^{\sharp} \left ( \frac{x}{\delta} \right ), \]
and  
\[ \mathbb{P} \left ( I(V^{\uparrow}) \leq x \right ) \leq \frac1{c (\delta - 1) x} \mathbb{P} \left ( I(V^{\sharp}) \leq \delta x \right ) = \frac1{c (\delta - 1) x} \int_{0}^{\delta x} m^{\sharp}(z) dz \leq \frac{\delta}{c (\delta - 1)} m^{\sharp} (\delta x). \]
The constant $c$ above is as in Proposition \ref{vposqueue0decond}. Applying Theorem 5.24 of \cite{Patieref10} to $m^{\sharp}(x/\delta)$ and $m^{\sharp} (\delta x)$ and combining with the above estimates we get the result. 

\end{proof}

\begin{remarque}

In Theorem 5.24 of \cite{Patieref10} is given an equivalent for the left tail of $I(V)$ (not an inequality). 
However, it would not be possible to state this estimate for the left tail of $I(V^{\uparrow})$, since the convolution factor $S_T$ in Proposition \ref{vpospartoffonctclassic} changes the {behavior} of the left tail. This is why Propositions \ref{vpospartoffonctclassic} and \ref{vposqueue0decond} only allow to transfer inequalities. 
Then, if one wants to deduce from Theorem 5.24 of \cite{Patieref10} an equivalent for the $\log$ of the left tail of $I(V^{\uparrow})$, one has to be aware that in Proposition \ref{vpospartoffonctclassic}, the upper bound for $\mathbb{P} ( I(V^{\uparrow}) \leq x )$ is given in term of $\mathbb{P} ( I(V) \leq (1+\epsilon)x )$ and not $\mathbb{P} ( I(V) \leq x )$. Due to this, the asymptotic of the $\log$ of the left tail of $I(V)$ can be transferred to $I(V^{\uparrow})$ only in the case of regular variation, but this case is already treated by Theorem \ref{varreg} in full generality. 

\end{remarque}


\section{Smoothness of the density, proof of Theorem \ref{asin13}} \label{densit}

$I(V^{\uparrow})$ is positive and infinitely divisible so it has no Brownian component, its L\'evy measure does not charge the negative half-line and has a structure of bounded variation. As a consequence, in the terminology of \cite{satoyamazato}, $I(V^{\uparrow})$ is of type $I$ and $\int_0^1 k(u) du < +\infty$. 

We assume that $V$ has unbounded variation. Let $\gamma_0$ be as in the terminology of \cite{satoyamazato}. We first prove that $\gamma_0 = 0$ which is equivalent to proving that the support of $I(V^{\uparrow})$ contains $0$. Let us apply Proposition \ref{decomposition} for some $y>0$. In view of \eqref{kesten2.0} and of the definition of $A^y$, we only need to prove that both the support of $\int_0^{\tau(V^{\uparrow}, y)} e^{- V^{\uparrow}(t)} dt$ and the support of $S_T$ contain $0$. 

First, note that, since $V$ (and therefore $V^{\sharp}$) has unbounded variation, Corollary VII.5 in \cite{Bertoin} yields that $\Psi_{V^{\sharp}}(\lambda)/\lambda \longrightarrow_{\lambda \rightarrow +\infty} +\infty$ so $\Phi_{V^{\sharp}}(\lambda)/\lambda \longrightarrow_{\lambda \rightarrow +\infty} 0$. $\Phi_{V^{\sharp}}$ is the Laplace exponent of the subordinator $\tau(V^{\sharp}, .)$ which has, therefore, no drift component. As a consequence $0$ belongs to the support of $\tau(V^{\sharp}, y)$ and, because of \eqref{nouvellepreuve1}, to the support of $\int_0^{\tau(V^{\uparrow}, y)} e^{- V^{\uparrow}(t)} dt$. 

Then, Lemma \ref{subexp} proves that the distribution of $S_T$ charges every neighborhood of $0$ so, by definition, the support of $S_T$ contains $0$. We thus have $\gamma_0 = 0$. 

If $I(V^{\uparrow})$ was of type $I_1, I_2, I_3, I_4$ or $I_5$ (still in the terminology of \cite{satoyamazato}), we could apply Theorem 1.6 of \cite{satoyamazato} with $n = 0$ and replace $\gamma_0$ by $0$ in there. The estimate obtained would be in conflict with Remarks \ref{estimdensite} and \ref{majouniverselle} which guarantee the existence of a constant $K$ such that the density of $I(V^{\uparrow})$ is bounded by $e^{-K/x}$ for $x$ small enough. We thus deduce that $I(V^{\uparrow})$ is not of type $I_1, I_2, I_3, I_4$ nor $I_5$. 
Since it is neither of type $I_7$ (because $\int_0^1 k(u) du < +\infty$), we get that $I(V^{\uparrow})$ is of type $I_6$. 

The fact that the density of $I(V^{\uparrow})$ belongs to the Schwartz space is then a consequence of the existence of moments of any positive order (by Theorem \ref{finiteandexpomoments}), the type $I_6$, and of the following Lemma \ref{schwartzselfdec}. 
The derivatives of all orders of this density converge to $0$ when $x$ goes to $0$ since this density is of class $\mathcal{C}^{\infty}$ and null on $]-\infty, 0[$. 

The $\log$-concavity of $x \mapsto \mathbb{P} (I(V^{\uparrow}) \leq x)$ on $]0, +\infty[$ is a consequence of the type $I_6$ and of Lemma 5.2 in \cite{satoyamazato}. 

It only remains to state and justify Lemma \ref{schwartzselfdec}. Our argument to justify it extends a little the proof of Theorem 28.4 of \cite{Sato}. 

\begin{lemme} \label{schwartzselfdec}

Let $X$ be a self-decomposable distribution of type $I_6$, in the terminology of \cite{satoyamazato}. If $X$ admits finite moments of any positive order, then its density belongs to the Schwartz space. 

\end{lemme}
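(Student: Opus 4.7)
The plan is to adapt the approach of the proof of Theorem 28.4 in \cite{Sato}, which establishes $C^{\infty}$ smoothness of certain infinitely divisible densities by showing that the characteristic function decays faster than any polynomial. The extension here consists in propagating the same rapid decay to every derivative of the characteristic function, from which the Schwartz property will follow by Fourier inversion.

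First I would observe that, since $X$ has finite absolute moments of every positive order, the characteristic function $\varphi(z) := \mathbb{E}[e^{izX}]$ belongs to $C^{\infty}(\mathbb{R})$, and $\mathbb{E}|X|^n < +\infty$ is equivalent to $\int_{|x| > 1} |x|^n \nu(dx) < +\infty$, where $\nu$ denotes the L\'evy measure of $X$. Writing $\psi := \log \varphi$ for the cumulant, the L\'evy--Khintchine representation then yields, for $n \geq 2$,
\[ \psi^{(n)}(z) = i^n \int_{\mathbb{R}} x^n e^{izx} \nu(dx), \]
which is a bounded function of $z \in \mathbb{R}$ by the above moment control; only $\psi'(z)$ has at most linear growth $O(|z|)$ as $|z| \to \infty$, coming from the possibly non-zero Gaussian component and the compensator in the L\'evy--Khintchine formula.

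The crux of the proof, and the step where I expect the main obstacle to lie, is to extract from the type $I_6$ or $I_7$ classification in \cite{satoyamazato} the estimate
\[ |z|^N |\varphi(z)| \longrightarrow 0 \text{ as } |z| \to \infty, \text{ for every } N \in \mathbb{N}. \]
This is the same control used in the proof of Theorem 28.4 of \cite{Sato} to obtain a $C^\infty$ density: it corresponds to a super-logarithmic lower bound on $-\log|\varphi(z)| = -\mathrm{Re}\,\psi(z)$ which, in the self-decomposable setting of \cite{satoyamazato}, is read off from the regularity of the function $k$ appearing in the representation of $\nu$. The types $I_6$ and $I_7$ are precisely those providing the $k$-regularity that yields this rapid decay; the other types, by contrast, only give a polynomial rate of decay and are incompatible with the Schwartz conclusion.

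Granting this, Fa\`a di Bruno's formula applied to $\varphi = e^{\psi}$ gives $\varphi^{(p)} = \varphi \cdot B_p(\psi', \ldots, \psi^{(p)})$, where $B_p$ is the $p$-th complete Bell polynomial. By the first step, $B_p(\psi'(z), \ldots, \psi^{(p)}(z))$ is $O(|z|^p)$ at infinity, so combining with the rapid decay of $\varphi$ yields
\[ |z|^q |\varphi^{(p)}(z)| \longrightarrow 0 \text{ as } |z| \to \infty, \text{ for all } p, q \in \mathbb{N}. \]
This is the statement $\varphi \in \mathcal{S}(\mathbb{R})$. Since $\varphi \in L^1(\mathbb{R})$, Fourier inversion gives the density $f(x) = \frac{1}{2\pi} \int_{\mathbb{R}} \varphi(z) e^{-izx} dz$, and the stability of the Schwartz space under Fourier transform delivers $f \in \mathcal{S}(\mathbb{R})$, as claimed. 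The role of the moment assumption, apart from ensuring smoothness of $\varphi$, is precisely to guarantee that the derivatives of $\psi$ have at most polynomial growth, so that the decay of $\varphi$ itself dominates in the Fa\`a di Bruno expansion.
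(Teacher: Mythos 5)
Your proposal is correct, but it takes a genuinely different route from the paper's. The paper decomposes $X \overset{\mathcal{L}}{=} Y_m + Z_m$ by splitting the $k$-function at level $m$: the factor $A_m(\xi) = \mathbb{E}[e^{i\xi Y_m}]$ has the explicit ODE $A_m' = C_m A_m$ with $C_m(\xi) = m(e^{i\xi\epsilon_m}-1)/\xi$ whose derivatives all vanish at infinity, and a Leibniz induction on the ratio $A_m^{(n)}/A_m$ propagates the bound $|A_m(\xi)| = O(|\xi|^{-m})$ (taken from Sato's Theorem 28.4) to all derivatives of $A_m$; the other factor $B_m$ has bounded derivatives of all orders thanks to the moment assumption, and Leibniz on the product $\varphi_X = A_m B_m$ finishes. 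You instead work directly with the cumulant $\psi = \log\varphi_X$ and apply Fa\`a di Bruno to $\varphi_X = e^{\psi}$: the moment assumption gives polynomial (in fact bounded, for $n \geq 2$) growth of the $\psi^{(n)}$, so $B_p(\psi', \dots, \psi^{(p)})$ grows polynomially, and the same input from Sato's Theorem 28.4 — that $|\varphi_X(\xi)|$ decays faster than any polynomial when $k(0+) = \infty$ — dominates. Both proofs therefore rest on the same external ingredient from Sato, but your version avoids the auxiliary decomposition and the ratio induction, trading them for the Bell-polynomial bookkeeping; it is somewhat more streamlined, while the paper's version makes more explicit where the split between the $k$-level $m$ and the remainder enters. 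One small point worth making precise if you flesh this out: the statement covers types $I_6$ and $I_7$, and what both routes actually use from that classification is only $k(0+) = +\infty$, equivalently $\int_{|x|\leq 1}\nu(dx) = \infty$, which is the hypothesis of Sato's Theorem 28.4.
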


\begin{proof}

Since $X$ is self-decomposable of type {$I_6$}, its characteristic {function} can be represented as 
\[ \mathbb{E} \left [ e^{i \xi X} \right ] = \exp \left ( \int_0^{+\infty} (e^{i \xi z} - 1) \frac{k(z)}{z} dz \right ) =: \varphi_X(\xi). \]
Without loss of generality we have assumed that the drift component of $X$ is null. In the above expression, $k$ is a right-continuous non-increasing function such that $k(0+) = +\infty$. Let us fix an arbitrary $m > 0$ and $\epsilon_m$ such that $k(z) \geq m$ for all $z \in [0, \epsilon_m]$. It is plain from the L\'evy-Kintchine formula that we can define an infinitely divisible random variable $Y_m$ with characteristic {function} 
\[ \mathbb{E} \left [ e^{i \xi Y_m} \right ] = \exp \left ( \int_0^{\epsilon_m} (e^{i \xi z} - 1) \frac{m}{z} dz \right ) =: A_m(\xi), \]
and an infinitely divisible random variable $Z_m$ with characteristic {function} 
\[ \mathbb{E} \left [ e^{i \xi Z_m} \right ] = \exp \left ( \int_0^{+\infty} (e^{i \xi z} - 1) \frac{k(z) - m \mathds{1}_{[0, \epsilon_m]}(z)}{z} dz \right ) =: B_m(\xi). \]
If we assume $Y_m$ and $Z_m$ to be independent we have clearly $\varphi_X(\xi) = A_m(\xi) \times B_m(\xi)$, so $X \overset{\mathcal{L}}{=} Y_m + Z_m$. We first study the asymptotic behavior of the successive derivatives of $A_m$ and $B_m$ to provide a bound for $\varphi_X(\xi)$. 

$X$ admits finite moments of any positive order, $Y_m$ and $Z_m$ are positive and satisfy $Y_m + Z_m \overset{\mathcal{L}}{=} X$, we thus deduce that $Y_m$ and $Z_m$ admit finite moments of any positive order. As a consequence $A_m$ and $B_m$ are of class $C^{\infty}$ and all their derivatives are bounded. In particular 
\begin{eqnarray}
\forall n \geq 0, \ \left | B_m^{(n)}(\xi) \right | = \underset{|\xi| \rightarrow +\infty}{\mathcal{O}} (1). \label{rapder2}
\end{eqnarray}

Then, note that 
\[ A_m'(\xi) = im \int_0^{\epsilon_m} e^{i \xi z} dz \times A_m(\xi) = \frac{m}{\xi} (e^{i \xi \epsilon_m} - 1) \times A_m(\xi). \]
Let us denote $C_m(\xi) := m(e^{i \xi \epsilon_m} - 1)/\xi$. It is easy to see that the derivatives of any order of $C_m$ converge to $0$ when $|\xi|$ goes to $+\infty$. Let us justify by strong induction that for any $n \geq 1$ we have 
\begin{eqnarray}
A_m^{(n)}(\xi) / A_m(\xi) \underset{|\xi| \rightarrow +\infty}{\longrightarrow} 0. \label{rapder}
\end{eqnarray}
When $n =1$, this is equivalent to the convergence to $0$ of $C_m(\xi)$, which is already known. Let us now fix $l \geq 1$ and assume that \eqref{rapder} is true for all $n \leq l$. Applying the Leibniz formula to derive $l$ times the product $A_m'(\xi) = C_m(\xi) \times A_m(\xi)$, we get 
\[ A_m^{(l+1)}(\xi) / A_m(\xi) = \sum_{j=0}^{l} \ C_l^j \ C_m^{(j)}(\xi) \ A_m^{(l-j)}(\xi) / A_m(\xi). \]
The latter converges to $0$ when $|\xi|$ goes to $+\infty$ by the induction hypothesis and the fact that the derivatives of any order of $C_m$ converge to $0$ at infinity. Thus the induction is proved. We can see that $|A_m(\xi)| = \mathcal{O}_{|\xi| \rightarrow +\infty} (|\xi|^{-m})$ (see for example the proof of Theorem 28.4 in \cite{Sato}). Combining with \eqref{rapder} this yields 
\begin{eqnarray}
\forall n \geq 0, \ \left | A_m^{(n)}(\xi) \right | = \underset{|\xi| \rightarrow +\infty}{\mathcal{O}} (|\xi|^{-m}). \label{rapder1}
\end{eqnarray}

From \eqref{rapder2} and \eqref{rapder1} applied with $n=0$ we know that $|\varphi_X(\xi)| = \mathcal{O}_{|\xi| \rightarrow +\infty} (|\xi|^{-m})$. For any $n \geq 1$ we can apply the Leibniz formula to derive $n$ times the product $\varphi_X(\xi) = A_m(\xi) \times B_m(\xi)$ and we get 
\[ \varphi_X^{(n)}(\xi) = \sum_{j=0}^{n} \ C_n^j \ A_m^{(j)}(\xi) \ B_m^{(n-j)}(\xi). \]
According to \eqref{rapder2} and \eqref{rapder1} we deduce that $\forall n \geq 0, \ |\varphi_X^{(n)}(\xi)| = \mathcal{O}_{|\xi| \rightarrow +\infty} (|\xi|^{-m})$. Then, since $m$ is arbitrary, this implies that $\varphi_X$ belongs to the Schwartz space. Since this space is stable by Fourier transform, $X$ admits a density in the Schwartz space. 

\end{proof}

\section{The spectrally positive case} \label{brief}

We now study the exponential functional of $Z^{\uparrow}$, where $Z$ is a spectrally positive L\'evy process drifting to $+\infty$ with unbounded variation. 

%
 

Recall that, for any $x \geq 0$, $Z^{\uparrow}_x$ must be seen as $Z$ conditioned to stay positive and starting from $x$. Note that, since $Z$ converges almost surely to infinity, for $x > 0$, $Z^{\uparrow}_x$ is only $Z_x$ conditioned in the usual sense to remain positive. 


\subsection{Finiteness, exponential moments: Proof of Theorem \ref{vneglapl}}

The idea is that adding a small term of negative drift to $Z^{\uparrow}$ does not change its convergence to $+ \infty$. Thus, $Z^{\uparrow}$ is ultimately greater than a deterministic linear function for which the exponential functional is defined and deterministically bounded. The key point is then to control the time taken by $Z^{\uparrow}$ to become greater than the linear function once and for good. We start with the following lemma. 

\begin{lemme} \label{vneglapltpslin}

For any $y>0$, there exists $\epsilon > 0$ and positive constants $c_1$ and $c_2$ such that 
\[ \forall s > 0, \ \mathbb{P} \left ( \mathcal{R}(Z^{\uparrow}_y(.) - (y + \epsilon \ .), {]-\infty, 0]}) > s \right ) \leq c_1 e^{-c_2 s}. \]

\end{lemme}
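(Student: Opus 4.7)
My plan is to reduce the statement for the conditioned process $Z^{\uparrow}_y$ to the analogous statement for the unconditioned, drift-perturbed process $\tilde Z(t) := Z(t) - \epsilon t$, then to control the latter via the well-known exponential law of the global infimum of a spectrally positive L\'evy process drifting to $+\infty$, combined with the Markov property at time $s$.

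Since $Z$ drifts to $+\infty$ and $-Z$ is spectrally negative with non-trivial zero $\kappa_Z$ of $\Psi_{-Z}$, the event $\{\inf_{t \geq 0} Z_y(t) > 0\}$ has probability $1 - e^{-\kappa_Z y} > 0$. Because $Z^{\uparrow}_y$ is $Z_y$ conditioned in the usual sense on this event and because $Z_y(t) - (y + \epsilon t) = Z(t) - \epsilon t = \tilde Z(t)$, the elementary inequality $\mathbb{P}(A \mid B) \leq \mathbb{P}(A)/\mathbb{P}(B)$ yields
\[ \mathbb{P}\!\left(\mathcal{R}(Z^{\uparrow}_y(.) - (y + \epsilon\, .), {]-\infty, 0]}) > s\right) \leq \frac{1}{1 - e^{-\kappa_Z y}}\, \mathbb{P}\!\left(\exists\, t > s,\ \tilde Z(t) \leq 0\right). \]

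I next fix $\epsilon > 0$ small enough that $\tilde Z$ still drifts to $+\infty$. From $\Psi_{-\tilde Z}(\lambda) = \Psi_{-Z}(\lambda) + \epsilon \lambda$ I get $\Psi_{-\tilde Z}(\kappa_Z) = \epsilon \kappa_Z > 0$ while $\Psi_{-\tilde Z}$ still has strictly negative right-derivative at $0$ for small $\epsilon$, so by convexity $\Psi_{-\tilde Z}$ has a non-trivial zero $\kappa_{\tilde Z} \in ]0, \kappa_Z[$ and is strictly negative on $]0, \kappa_{\tilde Z}[$. In particular $\tilde Z$ drifts to $+\infty$, and applying the classical Wiener--Hopf result (Corollary VII.2 of \cite{Bertoin}) to the spectrally negative process $-\tilde Z$, the random variable $M' := -\inf_{t \geq 0} \tilde Z'(t)$, for an independent copy $\tilde Z'$ of $\tilde Z$, is exponentially distributed with parameter $\kappa_{\tilde Z}$.

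By the Markov property at time $s$,
\[ \mathbb{P}\!\left(\exists\, t > s,\ \tilde Z(t) \leq 0\right) = \mathbb{E}\!\left[\mathbb{P}(M' \geq \tilde Z(s) \mid \tilde Z(s))\right]. \]
Since $\mathbb{P}(M' \geq z) = e^{-\kappa_{\tilde Z} z}$ for $z \geq 0$ and equals $1$ for $z < 0$, for any $\lambda \in ]0, \kappa_{\tilde Z}]$ I have $\mathbb{P}(M' \geq z) \leq e^{-\lambda z}$ uniformly in $z \in \mathbb{R}$ (the bound is trivial for $z \leq 0$ since $e^{-\lambda z} \geq 1$ there). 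Fixing any $\lambda \in ]0, \kappa_{\tilde Z}[$,
\[ \mathbb{P}\!\left(\exists\, t > s,\ \tilde Z(t) \leq 0\right) \leq \mathbb{E}\!\left[e^{-\lambda \tilde Z(s)}\right] = e^{s\, \Psi_{-\tilde Z}(\lambda)}, \]
and the strict negativity $\Psi_{-\tilde Z}(\lambda) < 0$ delivers the required exponential decay in $s$. The only substantive step is this Markov-property reduction to a Laplace-transform bound; preservation of the drift to $+\infty$ under the perturbation $-\epsilon t$ for small $\epsilon$ is immediate from the convexity of $\Psi_{-Z}$.
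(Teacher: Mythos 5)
Your proof is correct and follows essentially the same strategy as the paper: reduce to the unconditioned process via $\mathbb{P}(A \mid B) \leq \mathbb{P}(A)/\mathbb{P}(B)$, pick $\epsilon$ small so that $Z - \epsilon\,\cdot$ still drifts to $+\infty$, then combine the Markov property at time $s$ with the exponential law of the all-time infimum to obtain a Laplace-transform bound $e^{s\Psi_{-Z+\epsilon\cdot}(\lambda)}$ with negative exponent. The only cosmetic difference is that the paper handles the region $\tilde Z(s) \leq 0$ by splitting into two terms and using $\lambda = \alpha/2$, whereas you fold both cases into the single uniform estimate $\mathbb{P}(M' \geq z) \leq e^{-\lambda z}$ for all $z \in \mathbb{R}$ and any $\lambda \in ]0,\kappa_{\tilde Z}]$ — a slight streamlining of the same idea.
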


\begin{proof}

We fix $y > 0$. A spectrally negative L\'evy process $X$ drifts to $-\infty$ if and only if $\mathbb{E}[X(1)] < 0$ (see for example Corollary VII.2 in \cite{Bertoin}). $Z$ is a spectrally positive L\'evy process drifting to $+\infty$ so taking the dual we get $\mathbb{E}[Z(1)] > 0$. Now $\mathbb{E}[(Z-\epsilon.)(1)] = \mathbb{E}[Z(1)] -\epsilon$ which is positive for $\epsilon$ chosen small enough. 
This implies that $Z - \epsilon .$ is also a spectrally positive L\'evy process that drifts to $+\infty$ and which is not a subordinator. We have
\begin{align*}
\mathbb{P} \left ( \mathcal{R}(Z^{\uparrow}_y(.) - (y + \epsilon \ .), {]-\infty, 0]}) > s \right ) & = \mathbb{P} \left ( \inf_{t \in [s, + \infty[} Z^{\uparrow}_y(t) - (y + \epsilon t) \leq 0 \right ) \\
& = \mathcal{C} \ \mathbb{P} \left ( \inf_{t \in [s, + \infty[} Z_y(t) - (y + \epsilon t) \leq 0, \inf_{[0, +\infty[} Z_y > 0 \right ). 
\end{align*}
We have set $\mathcal{C} := 1/\mathbb{P} ( \inf_{[0, +\infty[} Z_y > 0 ) = 1/( 1-e^{-\kappa_Z y} )$. The last equality comes from the fact that $Z^{\uparrow}_y$ is only $Z_y$ conditioned to stay positive in the usual sense.  Now, noting that $Z_y \overset{\mathcal{L}}{=} y + Z$, we bound the above quantity by
\begin{align}
& \mathcal{C} \ \mathbb{P} \left ( \inf_{t \in [s, + \infty[} Z(t) - \epsilon t \leq 0 \right ) = \mathcal{C} \ \mathbb{P} \left ( \sup_{t \in [s, + \infty[} -Z(t) + \epsilon t \geq 0 \right ) \nonumber \\ 
\leq \ & \mathcal{C} \ \mathbb{P} \left ( \sup_{t \in [s, + \infty[} -Z(t) + \epsilon t \geq 0, \ -Z(s) + \epsilon s \leq 0 \right ) + \mathcal{C} \ \mathbb{P} \left ( -Z(s) + \epsilon s \geq 0 \right ) \nonumber \\
= \ & \mathcal{C} \ \mathbb{P} \left ( \sup_{t \in [0, + \infty[} -Z^s(t) + \epsilon t \geq -\left ( -Z(s) + \epsilon s \right ), -Z(s) + \epsilon s \leq 0 \right ) + \mathcal{C} \ \mathbb{P} \left ( -Z(s) + \epsilon s > 0 \right ). \label{vneglapl1}
\end{align}
From the stationarity and independence of increments, the process $-Z^s + \epsilon.$ is equal in law to $-Z + \epsilon.$ and independent from $-Z(s) + \epsilon s$. From Corollary VII.2 in \cite{Bertoin}, the supremum over $[0, + \infty[$ of the process $-Z^s + \epsilon.$ follows an exponential distribution with parameter $\alpha$, where $\alpha$ is the non-trivial zero of $\Psi_{-Z + \epsilon.}$. From this, combined with the independence from $-Z(s) + \epsilon s$, $(\ref{vneglapl1})$ becomes
\begin{align*}
& \mathcal{C} \ \mathbb{E} \left ( e^{ \alpha \left ( -Z(s) + \epsilon s \right )} \mathds{1}_{ \left \{ -Z(s) + \epsilon s \leq 0 \right \} } \right ) + \mathcal{C} \ \mathbb{P} \left ( -Z(s) + \epsilon s > 0 \right ) \\
\leq \ & \mathcal{C} \ \mathbb{E} \left ( e^{ \alpha \left ( -Z(s) + \epsilon s \right ) /2} \mathds{1}_{ \left \{ -Z(s) + \epsilon s \leq 0 \right \} } \right ) + \mathcal{C} \ \mathbb{P} \left ( e^{ \alpha \left ( -Z(s) + \epsilon s \right )/2} > 1 \right ). \end{align*}
We have used the decreases of the negative exponential for the first term and composed by the function $x \mapsto \exp(\frac{\alpha}{2} x)$ in the probability of the second term. The above is less than 
\[ \mathcal{C} \ \mathbb{E} \left ( e^{ \frac{\alpha}{2} \left ( -Z(s) + \epsilon s \right )} \right ) + \mathcal{C} \ \mathbb{P} \left ( e^{\frac{\alpha}{2} \left ( -Z(s) + \epsilon s \right ) } > 1 \right ) \leq \ 2 \mathcal{C} \ \mathbb{E} \left ( e^{ \alpha/2 \left ( -Z(s) + \epsilon s \right )} \right ), \]
where we have used Markov's inequality in the second term. 
\[ = 2 \mathcal{C} \ e^{s \Psi_{-Z + \epsilon.}(\alpha/2)}. \]
Since $\Psi_{-Z + \epsilon.}$ is negative on $]0, \alpha[$, we get the result with $c_1 = 2\mathcal{C}$ and $c_2 = -\Psi_{-Z + \epsilon.}(\alpha/2)$. 

%
%
%
%
%
%

\end{proof}

We can now prove Theorem \ref{vneglapl}. 

\begin{proof} of Theorem \ref{vneglapl}

We fix $y > 0$. In order to get $Z^{\uparrow}$ from $Z^{\uparrow}_y$, we use the decomposition given by Theorem 25 in \cite{Doney}. Let $m_y$ be the point where the process $Z^{\uparrow}_y$ reaches its infimum, $m_y := \sup \{ s \geq 0, \ Z^{\uparrow}_y(s-) \wedge Z^{\uparrow}_y(s) = \inf_{[0, +\infty[} Z^{\uparrow}_y \}$. Note that from the absence of negative jumps the infimum is always reached at least at $m_y-$ so $Z^{\uparrow}_y(m_y-) = \inf_{[0, +\infty[} Z^{\uparrow}_y$. Moreover, since $Z$ is regular for $]0, +\infty[$, $Z^{\uparrow}_y$ is actually continuous at $m_y$ (see the theorem in \cite{Doney}). The decomposition given by Theorem 25 in \cite{Doney} states that 

\begin{itemize}
\item The two processes $\left ( Z^{\uparrow}_y(m_y + s) - Z^{\uparrow}_y(m_y), \ s \geq 0 \right )$ and $\left (Z^{\uparrow}_y(s), \ 0 \leq s < m_y \right )$ are independent, 
\medbreak
\item $\left ( Z^{\uparrow}_y(m_y + s) - Z^{\uparrow}_y(m_y), \ s \geq 0 \right )$ is equal in law to $Z^{\uparrow}$. 
\end{itemize}
Now, 
\begin{align*}
I(Z^{\uparrow}_y) & = \int_0^{m_y} e^{-Z^{\uparrow}_y(u)}du + \int_{m_y}^{+\infty} e^{-Z^{\uparrow}_y(u)} du \\
& = \int_0^{m_y} e^{-Z^{\uparrow}_y(u)}du + e^{-Z^{\uparrow}_y(m_y)} \int_{0}^{+\infty} e^{-(Z^{\uparrow}_y(m_y + u) - Z^{\uparrow}_y(m_y))} du \\
& \geq e^{-y} \int_{0}^{+\infty} e^{-(Z^{\uparrow}_y(m_y + u) - Z^{\uparrow}_y(m_y))} du, 
\end{align*}
because almost surely $Z^{\uparrow}_y(m_y) \leq y$. Then, because of the above decomposition of the trajectory of $Z^{\uparrow}_y$, the latter is equal in law to $e^{-y} I(Z^{\uparrow})$. 
We thus get
\begin{eqnarray}
I(Z^{\uparrow}) \ \overset{sto}{\leq} \ e^{y} I(Z^{\uparrow}_y), \label{vneglapl2}
\end{eqnarray}
where $\overset{sto}{\leq}$ denotes a stochastic inequality. As a consequence we only need to prove the result for $I(Z^{\uparrow}_y)$. 
We now choose $\epsilon > 0$ as in Lemma \ref{vneglapltpslin}. We have 
\begin{align*}
0 \leq I(Z^{\uparrow}_y) & = \int_0^{\mathcal{R} \left (Z^{\uparrow}_y(.) - (y + \epsilon \ .), {]-\infty, 0]} \right )} e^{- Z^{\uparrow}_y(t)} dt + \int_{\mathcal{R} \left (Z^{\uparrow}_y(.) - (y + \epsilon \ .), {]-\infty, 0]} \right )}^{+ \infty} e^{- Z^{\uparrow}_y(t)} dt \\
& \leq \mathcal{R} \left (Z^{\uparrow}_y(.) - (y + \epsilon \ .), {]-\infty, 0]} \right ) + \int_{\mathcal{R} \left (Z^{\uparrow}_y(.) - (y + \epsilon \ .), {]-\infty, 0]} \right )}^{+ \infty} e^{- Z^{\uparrow}_y(t)} dt. 
\end{align*}
Then, for $t \geq \mathcal{R} (Z^{\uparrow}_y(.) - (y + \epsilon \ .), {]-\infty, 0]} )$, we have $Z^{\uparrow}_y(t) \geq y + \epsilon t$, so
\begin{align*}
0 \leq I(Z^{\uparrow}_y) & \leq \mathcal{R} \left (Z^{\uparrow}_y(.) - (y + \epsilon \ .), {]-\infty, 0]} \right ) + \int_{\mathcal{R} \left (Z^{\uparrow}_y(.) - (y + \epsilon \ .), {]-\infty, 0]} \right )}^{+ \infty} e^{-y - \epsilon t} dt \\
& \leq \mathcal{R} \left (Z^{\uparrow}_y(.) - (y + \epsilon \ .), {]-\infty, 0]} \right ) + \int_0^{+ \infty} e^{- y - \epsilon t} dt \\
& = \mathcal{R} \left (Z^{\uparrow}_y(.) - (y + \epsilon \ .), {]-\infty, 0]} \right ) + e^{-y}/\epsilon. 
\end{align*}
According to Lemma \ref{vneglapltpslin}, this is almost surely finite and admits some finite exponential moments. Thanks to $(\ref{vneglapl2})$ we have the same for $I(Z^{\uparrow})$ which is the expected result. 

\end{proof}

\begin{remarque}
The arguments of the proof of Theorem \ref{vneglapl} can be extended for a more general L\'evy process $Z$ if we assume that: $Z$ drifts to $+\infty$, $Z$ is regular for $]0, +\infty[$ and $]-\infty, 0[$, $\exists \lambda < 0$ s.t. $\mathbb{E}[\exp(\lambda Z(1))] < + \infty$ (which, according to Theorem 25.3 in \cite{Sato}, is equivalent to the fact that the restriction to $]-\infty, -1]$ of the L\'evy measure of $Z$ admits some finite exponential moments of negative order ). 
\end{remarque}

\subsection{Comparison between $I(Z^{\uparrow})$ and $I(Z)$: Proof of Theorem \ref{vnegqueue0}}


We need an analogous of Lemma \ref{dernierpassage2} in order to compare, as we did in subsection \ref{connection}, the exponential functionals $I(Z^{\uparrow})$ and $I(Z)$. We define $m$ to be the point where the process $Z$ reaches its infimum: $m := \sup \{ s \geq 0, \ Z(s-) \wedge Z(s) = \inf_{[0, +\infty[} Z \}$. Here again, from the absence of negative jumps, the infimum is always reached at least at $m-$ so $Z(m-) = \inf_{[0, +\infty[} Z$. Also, since $Z$ is regular for $]0, +\infty[$, $Z$ is actually continuous at $m$ (see the proof of Theorem 25 in \cite{Doney}). The arguments in the proof of Theorem 25 of \cite{Doney} yield 

\begin{lemme} \label{postminspecpos}

$Z (m + .) - Z(m)$ has the same law as $Z^{\uparrow}$ and is independent from the process $(Z(s), \ 0 \leq s \leq m)$. 

\end{lemme}

{We can now prove Theorem \ref{vnegqueue0}, using the decomposition from the above Lemma and arguments of fluctuation theory for L\'evy processes. 

\begin{proof} of Theorem \ref{vnegqueue0}


We have
\begin{align}
I(Z) & = \int_0^m e^{- Z(t)} dt + \int_m^{+ \infty} e^{- Z(t)} dt \nonumber \\
& = \int_0^m e^{- Z(t)} dt + e^{-Z(m)} \int_0^{+ \infty} e^{- (Z (m + t) - Z(m))} dt \label{amelqueue0} \\
& \geq \int_0^m e^{- Z(t)} dt + \int_0^{+ \infty} e^{- (Z (m + t) - Z(m))} dt \label{amelqueue1} \\
& \geq \int_0^{+ \infty} e^{- (Z (m + t) - Z(m))} dt. \nonumber 
\end{align}
For \eqref{amelqueue1} we have used the fact that almost surely $Z(m) \leq 0$. 
Lemma \ref{postminspecpos} tells us that the last term is equal in law to $I(Z^{\uparrow})$. 
We thus get $I(Z^{\uparrow}) \overset{sto}{\leq} I(Z)$ 
where $\overset{sto}{\leq}$ denotes a stochastic inequality. The first point of Theorem \ref{vnegqueue0} follows. 

We now prove the second point of the theorem. The combination of \eqref{amelqueue1} and Lemma \ref{postminspecpos} yields 
\begin{eqnarray}
\mathbb{P} \big ( I(Z) \leq x \big ) \leq \mathbb{P} \left( \int_0^m e^{- Z(t)} dt \leq x \right ) \times \mathbb{P} \big ( I(Z^{\uparrow}) \leq x \big ). \label{amelqueue2}
\end{eqnarray}
Let us fix $\epsilon \in ]0,1[$. Similarly as above, the combination of \eqref{amelqueue0} and Lemma \ref{postminspecpos} yields 
\begin{eqnarray}
\mathbb{P} \left( \int_0^m e^{- Z(t)} dt \leq \frac{\epsilon x}{2}, e^{-Z(m)} \leq 1+\frac{\epsilon}{2} \right ) \times \mathbb{P} \big ( I(Z^{\uparrow}) \leq x \big ) \leq \mathbb{P} \big ( I(Z) \leq (1+\epsilon)x \big ). \label{amelqueue3}
\end{eqnarray}
Now, note that $\int_0^m e^{- Z(t)} dt$ can be decomposed using fluctuation theory for L\'evy processes. Let $(\hat{L}^{-1}, \hat{H})$ denote the descending ladder process of $Z$, $\hat{L}(t) := \underline{Z}(t)$ is a local time at $0$ of $Z- \underline{Z}$ (this comes from Theorem VII.1 of \cite{Bertoin} and duality, since $Z$ is spectrally positive) and $\hat{L}^{-1}(.) = \tau(-Z,.)$ is the inverse of this local time, it is a subordinator killed at the exponential time $\hat{L}(\infty) = -Z(m)$ (that has parameter $\kappa_Z$, where $\kappa_Z$ is the non-trivial zero of $\Psi_{-Z}$, the Laplace exponent of $-Z$). For any $t \in [0, \hat{L}(\infty)[$, $\hat{H}(t) := -Z(\hat{L}^{-1}(t)) = t$, so that the Laplace exponent of $\hat{H}$ is $\lambda + \kappa_Z$. We denote by $\hat{\mathcal{N}}$ the excursion measure of $Z- \underline{Z}$ associated with $\hat{L}$, and for $\xi$ an excursion, recall that $\zeta(\xi) := \inf \{ s > 0, \ \xi(s) = 0 \}$ denotes its life-time and $H_0(\xi) := \sup_{[0, \zeta(\xi)]} \xi$ its height. $\zeta(\xi)$ and $H_0(\xi)$ can possibly be infinite. 

Let us also define $(L^{-1}, H)$, the ascending ladder process of $Z$: let $L$ be a local time at $0$ of $\overline{Z} - Z$, $L^{-1}$ the inverse of this local time, and $H(t) := Z(L^{-1}(t))$. Let $\Phi_H$ denote the Laplace exponent of $H$. 
According to the Wiener-Hopf factorization for Laplace exponents we have 
\begin{eqnarray}
\forall \lambda \geq 0, \ \Psi_{-Z}(\lambda) = -(-\lambda + \kappa_Z) \Phi_H(\lambda) = (\lambda - \kappa_Z) \Phi_H(\lambda). \label{wienerhopf}
\end{eqnarray}
Note that the above equality is true up to a multiplicative constant but we assume that the normalization of the local time $L$ has been chosen in such a way that this constant equals $1$. 

Recall from the Introduction (just before the statement of the theorem) that $\Phi_{-Z}$ denotes the Laplace exponent of the subordinator $\tau(-Z,.) = \hat{L}^{-1}(.)$ 
and that $\Phi_{-Z}$ is also the inverse of the restriction of $\Psi_{-Z}$ to $[\kappa_Z, +\infty[$: $\Phi_{-Z} = \Psi_{-Z |[\kappa_Z, +\infty[}^{-1}$. Since $Z$ has unbounded variation, Corollary VII.5 in \cite{Bertoin} yields that $\Psi_{-Z}(\lambda)/\lambda \longrightarrow_{\lambda \rightarrow +\infty} +\infty$ so $\Phi_{-Z}(\lambda)/\lambda \longrightarrow_{\lambda \rightarrow +\infty} 0$. Therefore, $\tau(-Z, .) = \hat{L}^{-1}(.)$ has no drift component. According to Corollary IV.6 in \cite{Bertoin}, there is a constant $\textbf{d} \geq 0$ such that $\int_0^t \mathds{1}_{Z(u)=\underline{Z}(u)} du = \textbf{d} \hat{L}(t)$, and according to Theorem IV.8 in \cite{Bertoin}, that $\textbf{d}$ is the drift component of $\hat{L}^{-1}$. In particular, we deduce that $\textbf{d} = 0$ in our case. Note also that $\Phi_{-Z}(\lambda) = \Psi_{-Z |[\kappa_Z, +\infty[}^{-1}(\lambda) \underset{\lambda \rightarrow +\infty}{\longrightarrow} +\infty$, so that $\tau(-Z, .) = \hat{L}^{-1}(.)$ is not a compound Poisson process. 

We can now decompose $\int_0^{\hat{L}^{-1}(t)} e^{- Z(u)} du$ similarly as in Lemma 2 of \cite{rivero2012} (note however that in our case, there is a minus in the exponential): let $Y$ be the subordinator defined by 
\[ \forall t \in [0, \hat{L}(\infty)[, \ Y(t) := \sum_{0 \leq u \leq t} \int_0^{\hat{L}^{-1}(u)-\hat{L}^{-1}(u-)} \exp \left ( -[Z(s+\hat{L}^{-1}(u-)) - Z(\hat{L}^{-1}(u-))] \right ) ds. \]
$Y$ must normally have a drift component equal to $\textbf{d} t$, but recall that $\textbf{d}$ has been justified to equal $0$ in our case. Note that $Y$ has a positive jump each time $\hat{L}^{-1}$ has so, since $\hat{L}^{-1}$ is not a compound Poisson process, neither is $Y$. Then, similarly as in Lemma 2 of \cite{rivero2012} we have 
\[ \left ( \int_0^{\hat{L}^{-1}(t)} e^{- Z(u)} du, \ 0 \leq t \leq \hat{L}(\infty) \right ) = \left ( \int_0^{t} e^{\hat{H}(u-)} dY(u), \ 0 \leq t \leq \hat{L}(\infty) \right ). \]
Note that in our case, these processes have finite life-time equal to $\hat{L}(\infty)$, that $\hat{L}^{-1}(\hat{L}(\infty)) = +\infty$, and that the jump of $\hat{L}^{-1}$ at $\hat{L}(\infty)$ corresponds to the infinite excursion of $Z- \underline{Z}$. Recall that $\hat{H}(u) = u$ for $u \in [0, \hat{L}(\infty)[$ and note that $\hat{L}^{-1}(\hat{L}(\infty)-) = m$, so taking the left limit of the previous processes at $\hat{L}(\infty) (= -Z(m))$ we get 
\[ \int_0^{m} e^{- Z(u)} du = \int_0^{\hat{L}(\infty)-} e^{u} dY(u). \]
We deduce that 
\begin{eqnarray}
Y(\hat{L}(\infty)-) \leq \int_0^{m} e^{- Z(u)} du \leq e^{\hat{L}(\infty)} Y(\hat{L}(\infty)-). \label{amelqueue4}
\end{eqnarray}
Considering the process of excursions of $Z- \underline{Z}$, $Y(\hat{L}(\infty)-)$ is the sum of the contributions of finite excursions, until the infinite excursion that occurs at time $\hat{L}(\infty)$. Therefore, $(\hat{L}(\infty), Y(\hat{L}(\infty)-))$ is equal in law to $(T, \tilde Y(T))$, where $\tilde Y$ is a non-killed subordinator built similarly as $Y$, but from the Poisson point process on $[0, +\infty[ \times \{ \xi, \ \zeta(\xi) < +\infty \}$ with intensity measure $dt \times \hat{\mathcal{N}}(. \cap \{ \xi, \ \zeta(\xi) < +\infty \})$, and $T$ is an independent exponential time with parameter $\kappa_Z$. Let $\Phi_{\tilde Y}$ denote the Laplace exponent of $\tilde Y$. Using the lower bound in \eqref{amelqueue4} and $(\hat{L}(\infty), Y(\hat{L}(\infty)-)) \overset{\mathcal{L}}{=} (T, \tilde Y(T))$, we have 
\begin{align}
\mathbb{P} \left( \int_0^m e^{- Z(t)} dt \leq x \right ) & \leq \mathbb{P} \left( Y(\hat{L}(\infty)-) \leq x \right ) = \mathbb{P} \left( \tilde Y(T) \leq x \right ) = \mathbb{P} \left( T \leq \tau(\tilde Y,x+) \right ) \nonumber \\
& = \mathbb{E} \left [ \int_0^{\tau(\tilde Y,x+)} \kappa_Z e^{-\kappa_Z u} du \right ] \leq \kappa_Z \mathbb{E} \left [ \tau(\tilde Y,x+) \right ] \nonumber \\
& = \kappa_Z V_{\tilde Y} ([0,x]) \leq \frac{\kappa_Z e}{\Phi_{\tilde Y}(1/x)}. \label{amelqueue5}
\end{align}
In the above, $V_{\tilde Y} (.)$ stands for the occupation measure of $\tilde Y$ 
and the last inequality comes from Proposition III.1 of \cite{Bertoin} (see in particular expression III.(5) there). 
Using this time the upper bound in \eqref{amelqueue4}, the fact that $\hat{L}(\infty) = -Z(m)$, and $(\hat{L}(\infty), Y(\hat{L}(\infty)-)) \overset{\mathcal{L}}{=} (T, \tilde Y(T))$, we have 
\begin{align}
& \mathbb{P} \left( \int_0^m e^{- Z(t)} dt \leq \frac{\epsilon x}{2}, e^{-Z(m)} \leq 1+\frac{\epsilon}{2} \right ) \geq \mathbb{P} \left( e^{\hat{L}(\infty)} Y(\hat{L}(\infty)-) \leq \frac{\epsilon x}{2}, e^{\hat{L}(\infty)} \leq 1+\frac{\epsilon}{2} \right ) \nonumber \\
\geq & \mathbb{P} \left( Y(\hat{L}(\infty)-) \leq \frac{\epsilon x}{2+\epsilon}, e^{\hat{L}(\infty)} \leq 1+\frac{\epsilon}{2} \right ) = \mathbb{P} \left( \tilde Y(T) \leq \frac{\epsilon x}{2+\epsilon}, e^{T} \leq 1+\frac{\epsilon}{2} \right ) \nonumber \\ 
= & \mathbb{P} \left( T \leq \tau \left ( \tilde Y,\frac{\epsilon x}{2+\epsilon}+ \right ) \wedge \log \left (1+\frac{\epsilon}{2} \right ) \right ) = \mathbb{E} \left [ \int_0^{\tau \left ( \tilde Y,\frac{\epsilon x}{2+\epsilon}+ \right ) \wedge \log \left (1+\frac{\epsilon}{2} \right )} \kappa_Z e^{-\kappa_Z u} du \right ] \nonumber \\
& \geq \kappa_Z \left (1+\frac{\epsilon}{2} \right )^{-\kappa_Z} \mathbb{E} \left [ \tau \left ( \tilde Y,\frac{\epsilon x}{2+\epsilon}+ \right ) \wedge \log \left (1+\frac{\epsilon}{2} \right ) \right ] \nonumber \\
& = \kappa_Z \left (1+\frac{\epsilon}{2} \right )^{-\kappa_Z} \left ( \mathbb{E} \left [ \tau \left ( \tilde Y,\frac{\epsilon x}{2+\epsilon}+ \right ) \right ] - \int_{\log \left (1+\frac{\epsilon}{2} \right )}^{+\infty} \mathbb{P} \left ( \tau \left ( \tilde Y,\frac{\epsilon x}{2+\epsilon}+ \right ) > s\right ) ds \right ) \nonumber \\
& = \kappa_Z \left (1+\frac{\epsilon}{2} \right )^{-\kappa_Z} \left ( V_{\tilde Y} \left ( \left [0, \frac{\epsilon x}{2+\epsilon} \right ] \right ) - \int_{\log \left (1+\frac{\epsilon}{2} \right )}^{+\infty} \mathbb{P} \left ( \tau \left ( \tilde Y,\frac{\epsilon x}{2+\epsilon}+ \right ) > s \right ) ds \right ). \label{amelqueue6}
\end{align}
Then, note that 
\begin{align}
& \int_{\log \left (1+\frac{\epsilon}{2} \right )}^{+\infty} \mathbb{P} \left ( \tau \left ( \tilde Y,\frac{\epsilon x}{2+\epsilon}+ \right ) > s \right ) ds = \int_{\log \left (1+\frac{\epsilon}{2} \right )}^{+\infty} \mathbb{P} \left (\tilde Y(s) \leq \frac{\epsilon x}{2+\epsilon} \right ) ds \nonumber \\
= & \int_{\log \left (1+\frac{\epsilon}{2} \right )}^{+\infty} \mathbb{P} \left ( \exp \left ( -\frac{2+\epsilon}{\epsilon x} \tilde Y(s) \right ) \geq e^{-1} \right ) ds \leq e \int_{\log \left (1+\frac{\epsilon}{2} \right )}^{+\infty} \mathbb{E} \left [ \exp \left ( -\frac{2+\epsilon}{\epsilon x} \tilde Y(s) \right ) \right ] ds \nonumber \\
= & e \int_{\log \left (1+\frac{\epsilon}{2} \right )}^{+\infty} \exp \left ( - s \Phi_{\tilde Y} \left (\frac{2+\epsilon}{\epsilon x} \right ) \right ) ds = \frac{e}{\Phi_{\tilde Y} \left (\frac{2+\epsilon}{\epsilon x} \right ) \left (1+\frac{\epsilon}{2} \right )^{\Phi_{\tilde Y} \left (\frac{2+\epsilon}{\epsilon x} \right )}}. \label{amelqueue7}
\end{align}
According to Proposition III.1 of \cite{Bertoin}, the term $V_{\tilde Y} ( [0, \epsilon x/(2+\epsilon) ] )$ in \eqref{amelqueue6} is greater than $1/4 \log(8)\Phi_{\tilde Y} ((2+\epsilon)/\epsilon x)$. Moreover $\Phi_{\tilde Y} ((2+\epsilon)/\epsilon x)$ converges to $+\infty$ when $x$ goes to $0$ since $\tilde Y$ is not a compound Poisson process (because $Y$ is not, so neither is $\tilde Y$), so combining with \eqref{amelqueue7} and putting into \eqref{amelqueue6} we get for $x$ small enough 
\begin{eqnarray}
\mathbb{P} \left( \int_0^m e^{- Z(t)} dt \leq \frac{\epsilon x}{2}, e^{-Z(m)} \leq 1+\frac{\epsilon}{2} \right ) \geq \frac{c_1}{\Phi_{\tilde Y} \left (\frac{2+\epsilon}{\epsilon x} \right )} \geq \frac{c_1 \epsilon}{(2+\epsilon)\Phi_{\tilde Y} \left (\frac{1}{x} \right )}. \label{amelqueue8}
\end{eqnarray}
In the above, $c_1 = c_1(\kappa_Z, \epsilon)$ is a positive constant and the last inequality comes from concavity of $\Phi_{\tilde Y}$. 
Now plugging \eqref{amelqueue5} and \eqref{amelqueue8} into respectively \eqref{amelqueue2} and \eqref{amelqueue3} we obtain the existence of a positive constant $c_2 = c_2(\kappa_Z, \epsilon)$ such that: 
\begin{eqnarray}
\frac1{\kappa_Z e} \mathbb{P} \big ( I(Z) \leq x \big ) \leq \frac{\mathbb{P} \big ( I(Z^{\uparrow}) \leq x \big )}{\Phi_{\tilde Y} \left (\frac{1}{x} \right )} \leq c_2 \mathbb{P} \big ( I(Z) \leq (1+\epsilon)x \big ). \label{amelqueue9}
\end{eqnarray}
Note that the upper bound is true only for $x$ small enough. 

To establish the result we now need to understand the asymptotic behavior of $\Phi_{\tilde Y}$ at infinity. By construction, $\tilde Y$ has no drift component and it only represents the contribution of finite excursions of $Z- \underline{Z}$. Therefore, similarly as in the proof of Lemma 2(iii) of \cite{rivero2012} we have 
\[ \forall \lambda \geq 0, \ \Phi_{\tilde Y}(\lambda) = \hat{\mathcal{N}} \left ( 1 - \exp \left ( -\lambda \int_0^{\zeta(\xi)} e^{- \xi(s)} ds \right ) ; \zeta(\xi) < +\infty \right ). \]
We can repeat, in our case, the computations from the proof of Lemma 2(iii) in \cite{rivero2012}. The requirement $\zeta(\xi) < +\infty$ enters into the expectation where it eventually becomes $\tau(-Z,x) < +\infty$. We obtain 
\[ \forall \lambda \geq 0, \ \Phi_{\tilde Y}(\lambda) = \lambda \int_0^{+\infty} e^{-x} \mathbb{E} \left [ \exp \left ( - \lambda e^{-x} \int_0^{\tau(-Z,x)} e^{-Z(s)} ds \right ) ; \tau(-Z,x) < +\infty \right ] V_H(dx). \]
In this expression, $V_H(dx)$ stands for the occupation measure of the ascending ladder height process $H$. The appearance of $V_H$ in the above expression relies on the fact that the occupation measure of the ascending ladder height process $H$ equals the occupation measure for excursions of $Z- \underline{Z}$, as proved in exercise VI.5 of \cite{Bertoin}. Let us mention that, for this to hold, it is important that the local time $L$ is normalized as we assumed a little before, that is, so that \eqref{wienerhopf} holds (otherwise $V_H$ has to be multiplied by some constant). 

Clearly $\int_0^{\tau(-Z,x)} e^{-Z(s)} ds \leq e^x \tau(-Z,x)$ so we get for all $\lambda \geq 0$ 
\begin{align}
\Phi_{\tilde Y}(\lambda) & \geq \lambda \int_0^{+\infty} e^{-x} \mathbb{E} \left [ e^{- \lambda \tau(-Z,x)} ; \tau(-Z,x) < +\infty \right ] V_H(dx) \nonumber \\
& = \lambda \int_0^{+\infty} e^{-x(1+\Phi_{-Z}(\lambda))} V_H(dx) = \frac{\lambda}{\Phi_H(1+\Phi_{-Z}(\lambda))} \nonumber \\
& \geq \frac{\lambda}{\Phi_H(\Phi_{-Z}(\lambda))} \times \frac{\Phi_{-Z}(\lambda)}{1+\Phi_{-Z}(\lambda)} = \frac{\lambda (\Phi_{-Z}(\lambda) - \kappa_Z)}{\Psi_{-Z}(\Phi_{-Z}(\lambda))} \times \frac{\Phi_{-Z}(\lambda)}{1+\Phi_{-Z}(\lambda)} \nonumber \\
& = (\Phi_{-Z}(\lambda) - \kappa_Z) \times \frac{\Phi_{-Z}(\lambda)}{1+\Phi_{-Z}(\lambda)} \underset{\lambda \rightarrow +\infty}{\sim} \Phi_{-Z}(\lambda). \label{equivlplb}
\end{align}
In the above we have used the fact that the Laplace transform of $V_H(dx)$ is $1/\Phi_H(.)$, the concavity of $\Phi_H$, \eqref{wienerhopf}, and the fact that $\Phi_{-Z} = \Psi_{-Z |[\kappa_Z, +\infty[}^{-1}$. 

Let us now prove an upper bound for the asymptotic behavior of $\Phi_{\tilde Y}$. Let us fix $\eta > 0$ and define $\tilde Y^{\eta}$ by cutting some jumps from $\tilde Y$: those resulting from excursions higher than $\eta$. As a result we have 
\[ \forall \lambda \geq 0, \ \Phi_{\tilde Y^{\eta}}(\lambda) = \hat{\mathcal{N}} \left ( 1 - \exp \left ( -\lambda \int_0^{\zeta(\xi)} e^{- \xi(s)} ds \right ) ; H_0(\xi) < \eta \right ). \]
Since the set of jumps removed has finite measure for $\hat{\mathcal{N}} (. \cap \{ \xi, \ \zeta(\xi) < +\infty \})$, $\tilde Y$ is equal to $\tilde Y^{\eta}$ plus an independent compound Poisson process which implies that $\Phi_{\tilde Y^{\eta}}(\lambda) \sim_{\lambda \rightarrow +\infty} \Phi_{\tilde Y}(\lambda)$ (because the Laplace exponent of a compound Poisson process is bounded). We have 
\begin{align*}
\forall \lambda \geq 0, \ \Phi_{\tilde Y^{\eta}}(\lambda) & = \lambda \ \hat{\mathcal{N}} \left ( \int_0^{\zeta(\xi)} e^{- \xi(s)} \exp \left ( -\lambda \int_s^{\zeta(\xi)} e^{- \xi(u)} du \right ) ds ; H_0(\xi) < \eta \right ) \\
& \leq \lambda \ \hat{\mathcal{N}} \left ( \int_0^{\zeta(\xi)} e^{- \xi(s)} \mathds{1}_{\xi(s) \leq \eta} \mathds{1}_{\tau(\xi(s+.), 0) < \tau(\xi(s+.), \xi(s)+\eta)} \exp \left ( -\lambda \int_s^{\zeta(\xi)} e^{- \xi(u)} du \right ) ds \right ). 
\end{align*}
Then we can repeat, from the above term, the computations from the proof of Lemma 2(iii) of \cite{rivero2012}. We get for all $\lambda \geq 0$, 
\begin{align}
\Phi_{\tilde Y^{\eta}}(\lambda) & \leq \lambda \int_0^{\eta} e^{-x} \mathbb{E} \left [ \exp \left ( - \lambda e^{-x} \int_0^{\tau(-Z,x)} e^{-Z(s)} ds \right ) ; \tau(-Z,x) < \tau(Z,\eta+) \right ] V_H(dx) \nonumber \\
& \leq \lambda \int_0^{\eta} \mathbb{E} \left [ e^{- \lambda e^{-2\eta} \tau(-Z,x)} ; \tau(-Z,x) < +\infty \right ] V_H(dx) = \lambda \int_0^{\eta} e^{-x\Phi_{-Z}(e^{-2\eta} \lambda)} V_H(dx) \nonumber \\
& \leq \lambda \int_0^{+\infty} e^{-x\Phi_{-Z}(e^{-2\eta} \lambda)} V_H(dx) = \frac{\lambda}{\Phi_H(\Phi_{-Z}(e^{-2\eta} \lambda))} \leq \frac{\lambda}{\Phi_H(e^{-2\eta} \Phi_{-Z}(\lambda))}  \nonumber \\ 
& \leq \frac{e^{2\eta} \lambda}{\Phi_H(\Phi_{-Z}(\lambda))} = \frac{e^{2\eta} \lambda (\Phi_{-Z}(\lambda) - \kappa_Z)}{\Psi_{-Z}(\Phi_{-Z}(\lambda))} = e^{2\eta} (\Phi_{-Z}(\lambda) - \kappa_Z) \underset{\lambda \rightarrow +\infty}{\sim} e^{2\eta} \Phi_{-Z}(\lambda). \label{equivlpub}
\end{align}
In the above we have used the fact that the Laplace transform of $V_H(dx)$ is $1/\Phi_H(.)$, the concavity of $\Phi_{-Z}$ and $\Phi_H$, \eqref{wienerhopf}, and the fact that $\Phi_{-Z} = \Psi_{-Z |[\kappa_Z, +\infty[}^{-1}$. 

Combining \eqref{equivlplb}, $\Phi_{\tilde Y}(\lambda) \sim_{\lambda \rightarrow +\infty} \Phi_{\tilde Y^{\eta}}(\lambda)$, \eqref{equivlpub}, and using that $\eta$ can be chosen as small as we want, we deduce that $\Phi_{\tilde Y}(\lambda) \sim_{\lambda \rightarrow +\infty} \Phi_{-Z}(\lambda)$. The combination of this with \eqref{amelqueue9} yields the second statement of the theorem.

\end{proof}
}

\begin{remarque} \label{tivialfinitnessvnb}
The finiteness of $I(Z^{\uparrow})$ can be proved directly using the first point of Theorem \ref{vnegqueue0} combined with the well known finiteness of $I(Z)$ (see Theorem 1 in \cite{Bertoinyor}). As in the spectrally negative case, this {comparison} with the functional of the non-conditioned process could not be used to derive the existence of exponential moments for $I(Z^{\uparrow})$ (Theorem \ref{vneglapl}), since the right tail of $I(Z)$ is heavier (see Remark \ref{compnoncondpecpos}). 
\end{remarque}

\begin{remarque} 
The arguments of the proof of the first point of Theorem \ref{vnegqueue0} can be extended for a more general L\'evy process $Z$ if we assume that $Z$ drifts to $+\infty$ and $Z$ is regular for $]0, +\infty[$ and $]-\infty, 0[$. For the second point, the spectral positivity of $Z$ is crucially used all along the proof. 
\end{remarque}

\section{Asymptotic tail at $0$ in the presence of positive jumps} \label{queue0sautspos}

\subsection{Proof of Theorem \ref{queue0general}}

As in the Introduction, $Y$ is a L\'evy process drifting to $+\infty$. Let us denote by $(A, d, \pi)$ its L\'evy triplet. $\pi$ therefore denotes its L\'evy measure and for $x > 0$, $\overline{\pi}(x) := \pi([x, +\infty[)$. 

We first prove \eqref{suppnb}. It is trivially satisfied if the support of $\pi$ is bounded from above. We thus assume that it is not. Let us fix $x \in ]0, 1[$ and define
\[ \pi^{a} := \pi ( . \cap [\log(1/x), +\infty[ ) \ \text{  and  } \ \pi^{b} := \pi ( . \cap ]-\infty, \log(1/x)[ ). \]
Let $Y^{a}$ and $Y^{b}$ be two independent L\'evy processes which generating triplets are respectively $(0, 0, \pi^{a})$ and $(A, d, \pi^{b})$. This yields $Y \overset{\mathcal{L}}{=} Y^{a} + Y^{b}$ according to the L\'evy-Khintchine formula. Let $J$ denote the random instant of the first jump of $Y^{a}$ (note that $J$ follows an exponential distribution with parameter $\overline{\pi}(\log(1/x))$) and $\tilde Y := Y(J+.)-Y(J)$. Because of the Markov property $\tilde Y$ is equal in law to $Y$ and independent from $\sigma (Y^{a}(s), {Y^{b}}(s), \ 0 \leq s \leq J)$. We have 
\begin{align*}
\mathbb{P} \left ( I(Y) \leq x \right ) & \geq \mathbb{P} \left ( J \leq x/3, \ \int_0^{J} e^{-Y^{b}(t)} dt \leq 2x/3, \ e^{-Y(J)} I(\tilde Y) \leq x/3 \right ) \\
& \geq \mathbb{P} \left ( J \leq x/3, \ \inf_{[0, J]} Y^{b} \geq -\log(2), \ e^{-Y(J)} I(\tilde Y ) \leq x/3 \right ). 
\end{align*}
Note that, on $\{ \inf_{[0, J]} Y ^{b} \geq -\log(2) \}$, $Y (J) = Y ^{a}(J) + Y ^{b}(J) \geq \log(1/x) -\log(2) = \log(1/2x)$ so $e^{-Y (J)} \leq 2x$. We thus have 
\begin{align*}
\mathbb{P} \left ( I(Y ) \leq x \right ) & \geq \mathbb{P} \left ( J \leq x/3, \ \inf_{[0, J]} Y ^{b} \geq -\log(2), \ 2x I(\tilde Y ) \leq x/3 \right ) \\
& = \mathbb{P} \left ( J \leq x/3, \ \inf_{[0, J]} Y ^{b} \geq -\log(2) \right ) \times \mathbb{P} \left ( I(\tilde Y ) \leq 1/6 \right ). 
\end{align*}
In the last equality we have used the independence between $\tilde Y $ and $\sigma (Y ^{a}(s), Y ^b(s), \ 0 \leq s \leq J)$. Then, 
\[ \mathbb{P} \left ( I(Y ) \leq x \right ) \geq \mathbb{P} \left ( J \leq x/3, \ \inf_{[0, 1]} Y ^{b} \geq -\log(2) \right ) \times \mathbb{P} \left ( I(\tilde Y ) \leq 1/6 \right ), \]
where we have used  the fact that, on $\{ J < x/3 \}$, $[0, J] \subset [0, 1]$. Then, since $Y ^{a}$ and $Y ^{b}$ are independent we obtain 
\begin{align*}
\mathbb{P} \left ( I(Y ) \leq x \right ) & \geq \mathbb{P} \left ( J \leq x/3 \right ) \times \mathbb{P} \left ( \inf_{[0, 1]} Y ^{b} \geq -\log(2) \right ) \times \mathbb{P} \left ( I(\tilde Y ) \leq 1/6 \right ) \\
& = \left ( 1 - e^{- x \overline{\pi}(\log(1/x))/3} \right ) \times \mathbb{P} \left ( \inf_{[0, 1]} Y ^{b} \geq -\log(2) \right ) \times \mathbb{P} \left ( I(\tilde Y ) \leq 1/6 \right ). 
\end{align*}
When $x$ goes to $0$, the first factor is equivalent to $x \overline{\pi}(\log(1/x))/3$ while the second factor converges to $\mathbb{P} ( \inf_{[0, 1]} Y  \geq -\log(2) )$ and the third factor remains constant. We thus get \eqref{suppnb}. 

The idea to prove \eqref{suppborne} is to first prove \eqref{caspoissonth} and then to make appear a Poisson process in the trajectory of a general $Y$. We thus now determine the left tail of $I(\alpha N)$ where $N$ is a standard Poisson process and $\alpha > 0$. Let us denote by $p$ the parameter of $N$. 

%

We proceed by studying the asymptotic behavior of the $\log$-Laplace transform of $I(\alpha N)$ at infinity. By a property of standard Poisson processes, it is easy to see that $I(\alpha N)$ can be written as
\[ \frac1{p} \sum_{k=0}^{+\infty} e^{-\alpha k} e_k, \]
where $(e_k)_{k \in \mathbb{N}}$ is a sequence of \textit{iid} exponential random variable with parameter $1$. This allows to compute the $\log$-Laplace transform of $I(\alpha N)$: 
\begin{eqnarray}
\forall \lambda \geq 0, \ - \log \left ( \mathbb{E} \left [ e^{-\lambda I(\alpha N)} \right ] \right ) = \sum_{k=0}^{+\infty} \log \left ( 1+ \frac{\lambda}{p} e^{-\alpha k} \right ). \label{serieloglapl}
\end{eqnarray}
Then, note that for any integer $k \geq 0$ we have 
\[ \log \left ( 1+ \frac{\lambda}{p} e^{-\alpha (k+1)} \right ) \leq \int_{k}^{k+1} \log \left ( 1+ \frac{\lambda}{p} e^{-\alpha y} \right ) dy \leq \log \left ( 1+ \frac{\lambda}{p} e^{-\alpha k} \right ). \]
Combining with \eqref{serieloglapl} we get for any $\lambda \geq 0$, 
\begin{eqnarray}
J(\lambda) \leq - \log \left ( \mathbb{E} \left [ e^{-\lambda I(\alpha N)} \right ] \right ) \leq J(\lambda) + \log \left ( 1+ \frac{\lambda}{p} \right ), \label{loglaplequivint}
\end{eqnarray}
where $J(\lambda) := \int_{0}^{+\infty} \log ( 1+ \frac{\lambda}{p} e^{-\alpha y} ) dy$. Making the change of variable $u = \lambda e^{-\alpha y} /p$ we get 
\[ J(\lambda) = \frac1{\alpha} \int_{0}^{\lambda/p} \frac{\log(1+u)}{u} du. \]
Now, $\lambda$ only determines the intervalle of integration. Since the above integral converges to $+\infty$ when $\lambda$ goes to infinity we have, for any $M > 0$, 
\[ J(\lambda) \underset{\lambda \rightarrow +\infty}{\sim} \frac1{\alpha} \int_{M}^{\lambda/p} \frac{\log(1+u)}{u} du. \]
In particular, we can fix $\epsilon > 0$ and choose $M$ so large such that $\forall u \geq M, \ (1-\epsilon) \log(u) \leq \log(1+u) \leq (1+\epsilon) \log(u)$. This implies in particular that 
\[ J(\lambda) \underset{\lambda \rightarrow +\infty}{\sim} \frac1{\alpha} \int_{1}^{\lambda/p} \frac{\log(u)}{u} du. \]
Now making the change of variable $v = \log(u)$ we get that the right hand side equals 
\[ \frac1{\alpha} \int_{0}^{\log (\lambda/p)} v dv = \frac1{2 \alpha} \left ( \log (\lambda/p) \right )^2. \]
As a consequence we have $J(\lambda) \sim ( \log (\lambda) )^2 / 2 \alpha$, so putting into \eqref{loglaplequivint} we get 
\begin{eqnarray}
- \log \left ( \mathbb{E} \left [ e^{-\lambda I(\alpha N)} \right ] \right ) \underset{\lambda \rightarrow +\infty}{\sim} \frac1{2 \alpha} \left ( \log (\lambda) \right )^2. \label{loglaplequivlog}
\end{eqnarray}

We now deduce the left tail of $I(\alpha N)$ by standard methods. From Markov's inequality we have 
\begin{eqnarray}
\forall x > 0, \ \mathbb{P} \left ( I(\alpha N) \leq x \right ) \leq e \mathbb{E} \left [ e^{- I(\alpha N)/x} \right ]. \label{markovpoisson}
\end{eqnarray}
Let us fix $\epsilon > 0$. We have 
\begin{align}
\forall x > 0, \ \mathbb{E} \left [ e^{- I(\alpha N)/x^{1+\epsilon}} \right ] & \leq \mathbb{E} \left [ e^{- I(\alpha N)/x^{1+\epsilon}} \mathds{1}_{ \left \{ I(\alpha N) \leq x \right \} } \right ] + \mathbb{E} \left [ e^{- I(\alpha N)/x^{1+\epsilon}} \mathds{1}_{ \left \{ I(\alpha N) > x \right \} } \right ] \nonumber \\
& \leq \mathbb{P} \left ( I(\alpha N) \leq x \right ) + e^{- 1/x^{\epsilon}}. \label{markovinvpoisson}
\end{align}
Combining \eqref{markovpoisson} and \eqref{markovinvpoisson} with \eqref{loglaplequivlog} we get 
\[ 1 \leq \underset{x \rightarrow 0}{\liminf} \frac{-\log \left ( \mathbb{P} \left ( I(\alpha N) \leq x \right ) \right )}{\frac1{2\alpha} \left ( \log (x) \right )^2} \leq \underset{x \rightarrow 0}{\limsup} \frac{-\log \left ( \mathbb{P} \left ( I(\alpha N) \leq x \right ) \right )}{\frac1{2\alpha} \left ( \log (x) \right )^2} \leq (1+\epsilon)^2. \]
Letting $\epsilon$ go to $0$ we get \eqref{caspoissonth}. 

We now prove \eqref{suppborne}. 
Since, by assumption, $\pi (. \cap ]0, +\infty[)$ is non zero, there exist $0 < \gamma_1< \gamma_2 < + \infty$ such that $\pi ( [\gamma_1, \gamma_2[) > 0$. Then, for $\eta \in ]0, 1[$ we define
\[ \pi^{\eta, 1} := \eta \pi ( . \cap [\gamma_1, \gamma_2[ ) \ \text{  and  } \ \pi^{\eta, 2} := \pi - \eta \pi ( . \cap [\gamma_1, \gamma_2[ ). \]
We set $Y^{\eta, 1}$ and $Y^{\eta, 2}$ to be two independent L\'evy processes which generating triplets are respectively $(0, 0, \pi^{\eta, 1})$ and $(A, d, \pi^{\eta, 2})$. We have $\pi = \pi^{\eta, 1} + \pi^{\eta, 2}$ so according to the L\'evy-Khintchine formula, 
\[ Y \overset{\mathcal{L}}{=} Y^{\eta, 1} + Y^{\eta, 2}. \]

Let us justify that $\eta$ can be chosen in such a way so that $Y^{\eta, 2}$ drifts to $+\infty$. First let us assume that $\mathbb{E} [(Y(1))^-] < +\infty$. Then $\mathbb{E} [Y(1)]$ is defined and belongs to $]0, +\infty]$ (otherwise $Y$ would not drift to $+\infty$ according to the law of large number and Theorem 36.7 in \cite{Sato}). Since $\mathbb{E}[Y(1)] = \mathbb{E}[Y^{\eta, 1}(1)] + \mathbb{E}[Y^{\eta, 2}(1)]$ and $\mathbb{E}[Y^{\eta, 1}(1)] < \gamma_2 \eta \pi ( [\gamma_1, \gamma_2[)$, we have that $\mathbb{E}[Y^{\eta, 2}(1)]$ is positive for $\eta$ small enough. By the law of large numbers, this implies that $Y^{\eta, 2}$ drifts to $+\infty$ for such small $\eta$. We now assume that $\mathbb{E} [(Y(1))^-] = +\infty$, then we also have $\mathbb{E} [(Y(1))^+] = +\infty$ (otherwise $Y$ would not drift to $+\infty$ according to the law of large numbers). Since $Y$ drifts to $+\infty$, $Y$ is in the case $(1)$ of Remark 37.13 of \cite{Sato}. Note that $\pi^{\eta, 2}$ and $\pi$ have the same asymptotic tails at $+\infty$ and $-\infty$ so replacing $\pi$ by $\pi^{\eta, 2}$ does not  change the finiteness of $K^-$ (resp. the infiniteness of $K^+$) from Remark 37.13 of \cite{Sato}. As a consequence for any $\eta \in ]0, 1[$, $Y^{\eta, 2}$ is also in the case $(1)$ of Remark 37.13 of \cite{Sato}, and therefore drifts to $+\infty$. In any case we can chose $\eta$ such that $Y^{\eta, 2}$ drifts to $+\infty$. We thus choose such an $\eta_0 \in ]0, 1[$ and denote by $m_2$ the random instant at which $Y^{\eta_0, 2}$ reaches its minimum. 


$Y^{\eta_0, 1}$ is a compound Poisson process, 
we define $N$ the counting process of its jumps: $N(t) := \sharp \left \{ s \in [0, t], \ Y^{\eta_0, 1}(s)- Y^{\eta_0, 1}(s-) > 0 \right \}$. $N$ is thus a standard Poisson process with parameter $c_0 := \eta_0 \pi ( [\gamma_1, \gamma_2[)$ and it is independent of $Y^{\eta_0, 2}$. We have 
\begin{align*}
I(Y) & = \int_0^{+ \infty} e^{- \left ( Y^{\eta_0, 1}(t) + Y^{\eta_0, 2}(t) \right )} dt \\
& \leq e^{-Y^{\eta_0, 2}(m_2)} \int_0^{+ \infty} e^{-Y^{\eta_0, 1}(t)} dt \\
& \leq e^{-Y^{\eta_0, 2}(m_2)} \int_0^{+ \infty} e^{-\gamma_1 N(t)} dt \\
& = e^{-Y^{\eta_0, 2}(m_2)} I(\gamma_1 N). \end{align*}
Then, from the independence between the two factors 
\begin{eqnarray} \mathbb{P} \left ( I(\gamma_1 N) \leq x/2 \right ) \times \mathbb{P} \left ( e^{-Y^{\eta_0, 2}(m_2)} \leq 2 \right ) \leq \mathbb{P} \left ( I(Y) \leq x \right ) \label{vnegqueue02}. \end{eqnarray}
Applying \eqref{caspoissonth} to $I(\gamma_1 N)$ we get, for $\epsilon > 0$ and all $x$ small enough, 

\[ e^{-(1+\epsilon) (\log(x))^2 / 2 \gamma_1} \leq \mathbb{P} \left ( I(\gamma_1 N) \leq x/2 \right ). \]
Combining with 
\eqref{vnegqueue02}, we get the result with $c = (1+\epsilon)/ 2 \gamma_1$. Since $\epsilon$ can be chosen as close as we want to $0$ and $\gamma_1$ can be chosen as close as we want to $S$, \eqref{suppborne} follows. 

%

\subsection{Proof of Proposition \ref{vnegprecisequeue0}}

We now prove Proposition \ref{vnegprecisequeue0}. The ingredients of the proof are: the factorization identity of $I(Y)$ from \cite{ref6patie}, Theorem 2.5 of \cite{pardo2013} which gives the asymptotic tails of exponential functionals of subordinators, and easy calculations on regularly varying functions. We give all the details in order to make the proof rigorous. {It can be seen that the idea of the proof is basically the same as for the proof of Theorem 7 in \cite{aristarivero}.} 

As in the statement of the proposition, we assume that $Y$ is spectrally positive and its L\'evy measure $\pi$ satisfies $(A)$ for some $\alpha > 0$. 
Let $H$ be the ascending ladder height process associated with $Y$. 
$H$ is a proper subordinator since $Y$ drifts to $+\infty$. Let $\pi_H$ denote the L\'evy measure of $H$ and for any $x > 0, \overline{\pi_H}(x) := \pi_H([x, +\infty[)$. 
{Let $\kappa_Y$ be the non-trivial zero of the Laplace exponent of $-Y$. Since $-Y$ is spectrally negative, we can apply the equality just before $(9.4.7)$ in \cite{Doney}, which yields 
\[ \forall x \geq 0, \ \overline{\pi_H}(x) = \int_0^{+\infty} e^{-\kappa_Y z} \overline{\pi}(z + x) dz \ \ \ \text{so} \ \ \ \frac{\overline{\pi_H}(x)}{\overline{\pi}(x)} = \int_0^{+\infty} e^{-\kappa_Y z} \frac{\overline{\pi}(z + x)}{\overline{\pi}(x)} dz. \]
}$\overline{\pi}$ is non-increasing so, for any $z \geq 0$, we have $\overline{\pi}(z + x)/\overline{\pi}(x) \leq 1$. By the assumption $(A)$, $\overline{\pi}(z + x)/\overline{\pi}(x)$ converges, for any fixed $z \geq 0$, to $e^{-\alpha z}$ when $x$ goes to $+\infty$. By dominated convergence, we conclude that $\overline{\pi_H}(x)/\overline{\pi}(x)$ converges to $1/(\alpha + \kappa_Y)$ when $x$ goes to $+\infty$, so 
\[ \overline{\pi_H}(x) \underset{x \rightarrow + \infty}{\sim} \frac1{\alpha + \kappa_Y} \overline{\pi}(x). \]
As a consequence, $\pi_H$ satisfies $(A)$ with the same $\alpha$. We can thus apply Theorem 2.5 of \cite{pardo2013} and we get $\mathbb{E}[I(H)^{-\alpha}] < +\infty$ together with the following equivalent for $m_H$, the density of the exponential functional $I(H)$. 
\[ m_H(x) \underset{x \rightarrow 0}{\sim} \mathbb{E}[I(H)^{-\alpha}] \ \overline{\pi_H}(\log (1/x)) \underset{x \rightarrow 0}{\sim}  \frac{\mathbb{E}[I(H)^{-\alpha}]}{\alpha + \kappa_Y} \ \overline{\pi}(\log (1/x)). \]

Now, let us fix $\epsilon > 0$. Thanks to the previous equivalence, there exists $y_0 \in ]0, 1[$ such that 
\begin{eqnarray}
\forall y \in ]0, y_0[, \ (1-\epsilon) \frac{\mathbb{E}[I(H)^{-\alpha}]}{\alpha + \kappa_Y} \ \overline{\pi}(\log (1/y)) \leq m_H(y) \leq (1+\epsilon) \frac{\mathbb{E}[I(H)^{-\alpha}]}{\alpha + \kappa_Y} \ \overline{\pi}(\log (1/y)). \label{tailexpoladder1}
\end{eqnarray}
According to Corollary 2.1(2) in \cite{ref6patie}, the density $m_Y$ of $I(Y)$ is given by the following expression for $x > 0$. 
\begin{align}
m_Y(x) & = \frac{x^{-\kappa_Y - 1}}{\Gamma(\kappa_Y)} \int_{0}^{+\infty} e^{-y/x} y^{\kappa_Y} m_H(y) dy \geq \frac{x^{-\kappa_Y - 1}}{\Gamma(\kappa_Y)} \int_{0}^{y_0} e^{-y/x} y^{\kappa_Y} m_H(y) dy \nonumber \\
& \geq (1-\epsilon) \frac{\mathbb{E}[I(H)^{-\alpha}] x^{-\kappa_Y - 1}}{(\alpha + \kappa_Y) \Gamma(\kappa_Y)} \int_{0}^{y_0} e^{-y/x} y^{\kappa_Y} \overline{\pi}(\log (1/y)) dy \nonumber \\
& = (1-\epsilon) \frac{\mathbb{E}[I(H)^{-\alpha}] x^{-\kappa_Y - 1}}{(\alpha + \kappa_Y) \Gamma(\kappa_Y)} \overline{\pi}(\log (1/x)) \int_{0}^{y_0} e^{-y/x} y^{\kappa_Y} \frac{\overline{\pi}(\log (1/y))}{\overline{\pi}(\log (1/x))} dy \nonumber \\
& = (1-\epsilon) \frac{\mathbb{E}[I(H)^{-\alpha}]}{(\alpha + \kappa_Y) \Gamma(\kappa_Y)} \overline{\pi}(\log (1/x)) \int_{0}^{+\infty} e^{-u} u^{\kappa_Y} \frac{\overline{\pi}(\log (1/ux))}{\overline{\pi}(\log (1/x))} \mathds{1}_{u < y_0 / x} du, \label{simpldens1}
\end{align}
where we have used the lower bound of \eqref{tailexpoladder1} and made the change of variable $u = y/x$. Note that $(A)$ implies that $\overline{\pi}(\log (.))$ has $-\alpha$-regular variation so, according to Karamata representation theorem, 
\begin{eqnarray}
\overline{\pi}(\log (z)) = z^{-\alpha} \ \exp \left ( c (z) + \int_{1}^{z} \frac{r(t)}{t} dt \right ). \label{karamata2}
\end{eqnarray}
In this representation $c$ is a bounded measurable function that converges to a constant $c$ at $+\infty$, and $r$ is a bounded measurable function that converges to $0$ at $+\infty$. By decreasing $y_0$ if necessary, we can assume that for all $z \geq 1/y_0$: $c - \epsilon \leq c (z) \leq c + \epsilon$ and $|r(z)| \leq \epsilon$. As a consequence, for all $x \leq y_0$ and $u \in ]0, y_0/x[$ we have 
\begin{align*}
\frac{\overline{\pi}(\log (1/ux))}{\overline{\pi}(\log (1/x))} & \geq u^{\alpha} \exp \left ( c (1/ux) - c (1/x) + \int_{1/x}^{1/ux} \frac{r(t)}{t} dt \right ) \\
& \geq u^{\alpha} \exp \left ( -2 \epsilon - \epsilon |\log(u)| \right ) = u^{\alpha} (u \wedge 1/u)^{\epsilon} e^{-2\epsilon}.
\end{align*}
In the last inequality we have used the fact that, since $1/x$ and $1/xu$ are greater than $1/y_0$ we have $- 2 \epsilon \leq c (1/ux) - c (1/x) \leq 2 \epsilon$ and $\epsilon |\log(u)| = \epsilon |\log(1/ux) - \log(1/x)| \geq \int_{1/x}^{1/ux} \frac{r(t)}{t} dt \geq - \epsilon |\log(1/ux) - \log(1/x)| = - \epsilon |\log(u)|$. Putting into \eqref{simpldens1} we get for all $x \leq y_0$, 
\[ m_Y(x) \geq (1-\epsilon) e^{-2\epsilon} \frac{\mathbb{E}[I(H)^{-\alpha}]}{(\alpha + \kappa_Y) \Gamma(\kappa_Y)} \overline{\pi}(\log (1/x)) \int_{0}^{+\infty} e^{-u} u^{\alpha + \kappa_Y} (u \wedge 1/u)^{\epsilon} \mathds{1}_{u < y_0 / x} du. \]
By monotone convergence in the integral of the right hand side we obtain 
\[ \underset{x \rightarrow 0}{\liminf} \frac{m_Y(x)}{\overline{\pi}(\log (1/x))} \geq (1-\epsilon) e^{-2\epsilon} \frac{\mathbb{E}[I(H)^{-\alpha}]}{(\alpha + \kappa_Y) \Gamma(\kappa_Y)} \int_{0}^{+\infty} e^{-u} u^{\alpha + \kappa_Y} (u \wedge 1/u)^{\epsilon} du. \]
Letting $\epsilon$ go to $0$ we obtain, still by monotone convergence, 
\begin{align}
\underset{x \rightarrow 0}{\liminf} \frac{m_Y(x)}{\overline{\pi}(\log (1/x))} & \geq \frac{\mathbb{E}[I(H)^{-\alpha}]}{(\alpha + \kappa_Y) \Gamma(\kappa_Y)} \int_{0}^{+\infty} e^{-u} u^{\alpha + \kappa_Y} du \nonumber \\
& = \frac{\mathbb{E}[I(H)^{-\alpha}] \Gamma(\alpha + \kappa_Y + 1)}{(\alpha + \kappa_Y) \Gamma(\kappa_Y)} = \frac{\mathbb{E}[I(H)^{-\alpha}] \Gamma(\alpha + \kappa_Y)}{\Gamma(\kappa_Y)}. \label{simpldens2}
\end{align}

For the upper bound, using again the expression of $m_Y$ from Corollary 2.1(2) in \cite{ref6patie} we have, for $x > 0$, 
\begin{eqnarray}
m_Y(x) = \frac{x^{-\kappa_Y - 1}}{\Gamma(\kappa_Y)} \int_{0}^{y_0} e^{-y/x} y^{\kappa_Y} m_H(y) dy + \frac{x^{-\kappa_Y - 1}}{\Gamma(\kappa_Y)} \int_{y_0}^{+\infty} e^{-y/x} y^{\kappa_Y} m_H(y) dy. \label{simpldens3}
\end{eqnarray}
If $x < y_0/\kappa_Y$, then $(y \mapsto e^{-y/x} y^{\kappa_Y})$ is decreasing on $[y_0, +\infty[$. As a consequence, if $x$ is small enough the second term of \eqref{simpldens3} is less than 
\[ \frac{x^{-\kappa_Y - 1}}{\Gamma(\kappa_Y)} e^{-y_0/x} y_0^{\kappa_Y} \int_{y_0}^{+\infty} m_H(y) dy. \]
For the first term of \eqref{simpldens3} we can proceed as we did for the lower bound of $m_Y(x)$ and get that it is less than 
\[ (1+\epsilon) e^{2\epsilon} \frac{\mathbb{E}[I(H)^{-\alpha}]}{(\alpha + \kappa_Y) \Gamma(\kappa_Y)} \overline{\pi}(\log (1/x)) \int_{0}^{+\infty} e^{-u} u^{\alpha + \kappa_Y} (u \vee 1/u)^{\epsilon} du. \]
Then, since $\overline{\pi}(\log(.))$ has $-\alpha$-regular variation we have $x^{\eta} = o(\overline{\pi}(\log (1/x)))$ for any $\eta > \alpha$. As a consequence the upper bound of the second term of \eqref{simpldens3} is negligible compared to the upper bound of the first term. We thus get 
\[ \underset{x \rightarrow 0}{\limsup} \frac{m_Y(x)}{\overline{\pi}(\log (1/x))} \leq (1+\epsilon) e^{2\epsilon} \frac{\mathbb{E}[I(H)^{-\alpha}]}{(\alpha + \kappa_Y) \Gamma(\kappa_Y)} \int_{0}^{+\infty} e^{-u} u^{\alpha + \kappa_Y} (u \vee 1/u)^{\epsilon} du. \]
Here again, we can let $\epsilon$ go to $0$ and obtain the same value as in \eqref{simpldens2}. Thus, combining with \eqref{simpldens2} we get 
{
\[ \underset{x \rightarrow 0}{\lim} \frac{m_Y(x)}{\overline{\pi}(\log (1/x))} = \frac{\mathbb{E}[I(H)^{-\alpha}] \Gamma(\alpha + \kappa_Y)}{\Gamma(\kappa_Y)}. \]
Now, note that the term $\Gamma(\alpha + \kappa_Y)/\Gamma(\kappa_Y)$ equals $\mathbb{E}[G_{\kappa_Y}^{\alpha}]$, where $G_{\kappa_Y}$ denotes a Gamma random variable of parameter $\kappa_Y$. Recall also that $\mathbb{E}[I(H)^{-\alpha}] < +\infty$. Putting in relation with the decomposition $I(Y) = I(H) \times G_{\kappa_Y}^{-1}$, from Corollary 2.1(1) in \cite{ref6patie}, we deduce that $\mathbb{E}[I(Y)^{-\alpha}] < +\infty$ and that the limit $\mathbb{E}[I(H)^{-\alpha}] \Gamma(\alpha + \kappa_Y)/\Gamma(\kappa_Y)$ equals $\mathbb{E}[I(Y)^{-\alpha}]$, which yields the result. Note that $\mathbb{E}[I(Y)^{-\alpha}] < +\infty$ is also a consequence of the fact that $Y$ is spectrally positive together with Theorem 3 of \cite{Bertoinyor}.}

{
\begin{remarque} 
We have used Corollary 2.1(2) from \cite{ref6patie}, which is true in the spectrally positive case, in order to relate the density $m_Y$ of $I(Y)$ to the density $m_H$ of $I(H)$. Note that even for a more general $Y$ the asymptotics of the distribution functions of $I(Y)$ and $I(H)$ can be directly related thanks to Theorem 6 of \cite{aristarivero}. 
\end{remarque} 
}

\textbf{Acknowledgements: }

We are grateful to two anonymous referees for their helpful comments that allowed us to considerably improve the paper. In particular, we thank one of the referees for the simpler and nicer proof he or she suggested to show the existence of exponential moments for $I(V^{\uparrow})$. 

\bibliographystyle{plain}
\bibliography{thbiblio}

\end{document}